\tikzset{>=To, baseline=(current bounding box.center),inner sep=3pt}
\numberwithin{equation}{section}
\newtheorem{thm}{Theorem}[section]
\newtheorem{prop}[thm]{Proposition}
\newtheorem{cor}[thm]{Corollary}
\newtheorem{lemma}[thm]{Lemma}
\theoremstyle{remark}
\newtheorem{rmk}[thm]{Remark}
\theoremstyle{definition}
\def\sw#1{{\sb{(#1)}}}
\newcommand{\co}{\operatorname{co}}
\newcommand{\id}{\mathrm{id}}
\newcommand{\C}{\mathbb{C}}
\newcommand{\N}{\mathbb{N}}
\newcommand{\Z}{\mathbb{Z}}
\newcommand{\ket}[1]{\left|#1\right>}
\renewcommand{\[}{\begin{equation}}
\renewcommand{\]}{\end{equation}}
\def\@tocline#1#2#3#4#5#6#7{\relax
  \ifnum #1>\c@tocdepth 
  \else
    \par \addpenalty\@secpenalty\addvspace{#2}%
    \begingroup \hyphenpenalty\@M
    \@ifempty{#4}{%
      \@tempdima\csname r@tocindent\number#1\endcsname\relax
    }{%
      \@tempdima#4\relax
    }%
    \parindent\z@ \leftskip#3\relax \advance\leftskip\@tempdima\relax
    \rightskip\@pnumwidth plus4em \parfillskip-\@pnumwidth
    #5\leavevmode\hskip-\@tempdima
      \ifcase #1
       \or\or \hskip 1em \or \hskip 2em \else \hskip 3em \fi%
      #6 \hskip 0.5em \nobreak\relax
    \dotfill\hbox to\@pnumwidth{\@tocpagenum{#7}}\par
    \nobreak
    \endgroup
  \fi}
\begin{document}

\author[F.~D'Andrea]{Francesco D'Andrea} 
\address[F.~D'Andrea]{Dipartimento di Matematica e Applicazioni ``R.~Caccioppoli'',
	Universit\`a di Napoli Federico II, and INFN Sezione di Napoli, Via Cintia, 80126 Napoli, Italy.}
\email{francesco.dandrea@unina.it}
\author[P.M.~Hajac]{Piotr~M.~Hajac}
\address[P.M.~Hajac]{Instytut Matematyczny, Polska Akademia Nauk, ul.~\'Sniadeckich 8, Warszawa, 00-656 Poland}
\email{pmh@impan.pl}
\author[T.~Maszczyk]{Tomasz Maszczyk}
\address[T.~Maszczyk]{Instytut Matematyki, 
     Uniwersytet Warszawski,
ul.~Banacha 2,
02-097 Warszawa, Poland}
\email{t.maszczyk@uw.edu.pl}
\author[B.~Zieli\'nski]{Bartosz Zieli\'nski}
\address[B.~Zieli\'nski]{Department of  Computer Science, University of \L{}\'od\'z, Pomorska 149/153 90-236
\L{}\'od\'z, Poland} \email{bzielinski@uni.lodz.pl}
\title[The K-theory of a quantum projective plane]{\vspace*{-15mm}
Milnor meets Hopf and Toeplitz at\\[5pt] the K-theory of quantum projective planes}
\date{December 2025}

\vspace{-1cm}

\begin{abstract}
We explore applications of the celebrated construction of the Milnor connecting homomorphism from the odd to the even K-groups in the context of
Hopf--Galois theory. For a finitely generated projective module associated to any piecewise cleft principal comodule algebra, we provide an explicit formula 
computing the clutching $K_1$-class   in terms of the representation matrix defining the module. Thus, the module is determined by an explicit Milnor
idempotent. We apply this new tool  to the K-theory of quantum complex projective planes to determine their $K_0$-generators in terms of
modules associated to noncommutative Hopf fibrations. On the other hand, using explicit homotopy between unitaries, 
we express the $K_0$-class of the Milnor idempotents in terms of elementary projections in the Toeplitz C*-algebra.
This allows us to infer that all our generators are in the positive cone of the $K_0$-group, 
which is a purely quantum phenomenon absent in the classical case.
\end{abstract}

\maketitle

{\footnotesize\parskip=2pt\tableofcontents}
 
\section{Introduction}
\noindent
Free generators of the $K^0$-group of a complex projective space were implicitly provided by Atiyah and Todd in \cite{at60}.
Then, Adams in \cite{a-jf62}, explicitly rewrote the $K^0$-group as
\begin{equation}\label{eq:11}
K^{0}(\mathbb{C}P^n)=\mathbb{Z}[x]/(x^{n+1}) .
\end{equation}
Following \cite[Theorem 2.5]{karoubi2009k}, here we adopt the choice of $x$ to be the Euler class $[1]-[\mathrm{L}_1]$ of the dual $\mathrm{L}_1$ of the 
tautological line bundle $\mathrm{L}_{-1}$. 
The generators $1,x,\ldots,x^n$ of the $K^0$-group can be expressed in terms of line bundles  
$\mathrm{L}_k:=\mathrm{L}_1^{\otimes k}$, $k\in\mathbb{Z}$.
In particular, for $n=2$ these generators are:
\begin{equation}\label{eq:12}
1=[1],\qquad
x=[1]-[\mathrm{L}_1], \qquad
x^2 =[\mathrm{L}_{1}\oplus \mathrm{L}_{-1}] -2[1].
\end{equation}

Let us stress that in \cite{at60}, arguments are based on the  ring  structure of K-theory, the multiplicative Chern character, 
rational cohomology rings, and contracting   
projective hyperplanes to obtain even-dimensional spheres:
 \mbox{$\mathbb{C}P^n/ \mathbb{C}P^{n-1}\cong S^{2n}$}.  
None of these tools are available in noncommutative geometry. 

However,  a concept that is still available in noncommutative geometry is that of an associated vector bundle. 
In the classical case, we can view the above line bundles as associated with the principal $U(1)$-bundle $S^{2n+1}\to\mathbb{C}P^n$.
For $n=2$, we can view $\mathrm{L}_{1}\oplus \mathrm{L}_{-1}$ as the vector bundle associated via the fundamental representation of $SU(2)$ 
to the $SU(2)$-principal bundle $S^{5}\times^{U(1)}SU(2)\to \mathbb{C}P^2$.

On the other hand, another concept that is still available in noncommutative geometry is that of the Milnor connecting
homomorphism, transferred to the K-theory of C*-algebras by Higson, as explained in~\cite[Section~0.4]{hrz13}.
Using the standard CW-complex structure of $\C P^2$, we can see the third generator in \eqref{eq:12} as the image under the Milnor connecting 
homomorphism of the $K^1(S^3)$-class of the fundamental representation of~$SU(2)$, viewed as a continuous function from $S^3$
to the matrix algebra $M_2(\mathbb{C})$.

In the noncommutative setting of the Vaksman--Soibelman odd quantum spheres and quantum complex projective spaces, we take the fundamental 
representation $U$ of $SU_q(2)$, which  by the index pairing is proven to be a generator of
$K_1(C(SU_q(2)))\cong\mathbb{Z}$ in~\cite{dhhmw12}, to compute the third generator of $K_0(C(\mathbb{C}P_q^2))$ using our general
Milnor--Hopf--Galois result (Theorem~\ref{thm41}). Then we combine \eqref{eq-large-diag} with \cite[Theorem~5.1]{hnpsz} to transfer this result to the
setting of multipushout quantum complex projective spaces $\mathbb{C}P^n_H$ introduced in~\cite{hkz12}, and thus complete the calculation of 
generators of $K_0(C(\mathbb{C}P^2_H))$ started in~\cite{hr17}.
It is worth recalling here that $\mathbb{C}P^1_H$ is the mirror quantum sphere~\cite{hms06b}, and that the K-theory groups of
$\mathbb{C}P_H^n$ were determined in~\cite{r-j12} for $n=2$, and in the full generality in~\cite{hnpsz}. 

On the other hand, it is shown in \cite{masuda1990noncommutative} that a certain unitary $\widetilde{w}\in C(SU_q(2))$  generates $K_1(C(SU_q(2)))$.
The same unitary was obtained in \cite{dhhmw12} as the image of the Loring idempotent \cite{l-ta86}
under the Bott connecting homomorphism induced by the pullback diagram~\eqref{eq:pullsuq2}. We show by a homotopy argument
that $[\widetilde{w}]=[U]\in K_1(C(SU_q(2)))$ thus ensuring that the Milnor connecting homomorphism applied to $[\widetilde{w}]$ computes
the same generator in $K_0(C(\mathbb{C}P_q^2))$. Then we transfer this result to the setting of the multipushout quantum projective plane 
by combining \eqref{ProjectiveMapsOmegaSmall} with Theorem~\ref{thm:transfer}. We thus obtain the unitary 
${w}\in C(S^3_H)$  that generates $K_1(C(S^3_H))$ and computes the same generator of $K_0(C(\mathbb{C}P^2_H))$ via the Milnor
connecting homomorphism as was obtained using~\cite[Theorem~5.1]{hnpsz}. Remarkably, both of the thus computed generators are expressed
in terms of projections in the Toeplitz C*-algebra~$\mathcal{T}$, which can be viewed as the universal C*-algebra generated by an isometry~$s$ 
\cite{coburn1967c,coburn1969c}. 

Summarizing, we arrive at one of our main results:
\begin{thm} \label{mainresult}
Let $L_{-1}$ and $L_{1}$ denote the respective
section modules of noncommutative tautological and dual tautological line bundles over  the multipushout quantum complex projective plane 
$\mathbb{C}P_H^2$ as defined in \eqref{spectral},  and $\mathrm{p}:= (0, (1-ss^{*})\otimes (1-ss^{*}))$ be a Toeplitz-type projection in the C*-algebra  
$C(\mathbb{C}P_H^2)$.  Then the following double equality holds:  
\begin{equation}\label{maineq}
[L_1\oplus L_{-1}] -2[1] = \partial_{10}([w])= [\mathrm{p}].
\end{equation}
\end{thm}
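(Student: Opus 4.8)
The proof establishes a double equality, so the plan is to prove the two equalities \eqref{maineq} separately and then concatenate them. The \emph{left} equality $[L_1\oplus L_{-1}]-2[1]=\partial_{10}([w])$ is the statement that the clutching/Milnor image of the generator $[w]\in K_1(C(S^3_H))$ agrees with the $K_0$-class built from the tautological and dual tautological modules. I would obtain this by invoking the general Milnor--Hopf--Galois formula of Theorem~\ref{thm41}: it expresses $\partial_{10}$ applied to the $K_1$-class of a representation matrix in terms of the associated finitely generated projective module. Concretely, I would identify $L_1\oplus L_{-1}$ as the module associated to the $SU_q(2)$-principal comodule algebra via the fundamental corepresentation, so that its representation matrix is exactly $U$ (equivalently $\widetilde w$). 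The homotopy argument announced in the introduction gives $[\widetilde w]=[U]$ in $K_1(C(SU_q(2)))$, and the transfer from $SU_q(2)$ to the multipushout $S^3_H$ via \eqref{ProjectiveMapsOmegaSmall} and Theorem~\ref{thm:transfer} yields the unitary $w$ with $[w]\in K_1(C(S^3_H))$. Applying Theorem~\ref{thm41} to this data then identifies $\partial_{10}([w])$ with $[L_1\oplus L_{-1}]-2[1]$, the shift by $2[1]$ coming from the rank-two fundamental corepresentation and the normalization of the Milnor idempotent.

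The \emph{right} equality $\partial_{10}([w])=[\mathrm{p}]$ is where the Toeplitz description enters. Here the plan is to compute $\partial_{10}([w])$ explicitly through the Milnor clutching construction attached to the pullback presentation of $C(\mathbb{C}P_H^2)$, which realizes $C(\mathbb{C}P_H^2)$ as a multipushout whose pieces involve the Toeplitz algebra $\mathcal T$. The connecting homomorphism $\partial_{10}$ is computed by lifting the unitary $w$ across the relevant surjections and forming the standard idempotent $\begin{pmatrix} v v^* & * \\ * & 1-v^*v \end{pmatrix}$-type expression out of a lift $v$; I would track how the isometry $s$ generating $\mathcal T$ enters this lift, so that the projection $1-ss^*$ appears. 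Because $w$ is explicitly given in terms of $s$, its lift and the resulting Milnor idempotent can be written down, and I expect the class to reduce to $(0,(1-ss^*)\otimes(1-ss^*))$ after discarding a scalar/trivial summand, which is precisely $\mathrm p$.

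The main obstacle I anticipate is the second equality: carrying the Milnor idempotent through to the elementary Toeplitz projection requires an \emph{explicit} homotopy of idempotents (or of unitaries) inside the matrix algebra over the relevant pullback C*-algebra, and controlling this homotopy so that all off-diagonal terms vanish and only the rank-one-type factor $(1-ss^*)\otimes(1-ss^*)$ survives. This is the step flagged in the introduction as using ``explicit homotopy between unitaries''; the delicate point is ensuring that the homotopy respects the clutching data across the pushout, i.e.\ that it is compatible with the restriction maps so that it descends to a well-defined path in $K_0(C(\mathbb{C}P_H^2))$. I would therefore treat this homotopy carefully, verifying at each endpoint that the matrix is a genuine projection and that the interpolating path consists of projections over the glued algebra.

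Once both equalities are established, concatenating them yields the displayed double equality \eqref{maineq}, and the fact that $[\mathrm p]$ is represented by an honest projection shows that the generator lies in the positive cone of $K_0(C(\mathbb{C}P_H^2))$, completing the proof.
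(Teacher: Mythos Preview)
Your overall two-step plan matches the paper, and your treatment of the left equality is essentially the paper's: Theorem~\ref{thm41} is applied in the Vaksman--Soibelman setting to the fundamental corepresentation $U$ to obtain $\partial_{10}([U])=[\tilde L_1\oplus\tilde L_{-1}]-2[1]$ (Lemma~\ref{LemGenTilda}), and this is then transferred to the Heegaard side via $(f^{U(1)})_*$ and Theorem~\ref{thm:transfer} (Corollary~\ref{MainCorSuff}). The homotopy $[U]=[\widetilde w]$ of Lemma~\ref{prop:51} belongs to \emph{this} step: its role is to replace the $2\times 2$ matrix $U$ by the scalar unitary $\widetilde w$, whose image under $\nu_*$ is~$w$.

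Where you go astray is the right equality. You anticipate that reducing the Milnor idempotent to the elementary Toeplitz projection requires a delicate homotopy of projections compatible with the gluing, and you identify this with the ``explicit homotopy between unitaries'' announced in the introduction. In fact no homotopy is needed at this stage at all. The paper computes $\partial_{10}([\widetilde w])$ in the $q$-case (Lemma~\ref{prop:52}) by lifting $\widetilde w$ along $\varpi$ to $c:=ss^*\otimes 1+(1-ss^*)\otimes s^*\in C(B^4_q)$ and setting $d:=c^*$. The key observation is that $cd=1$ exactly, so in~\eqref{MilnProjEq} the off-diagonal entries vanish identically and the top-left entry becomes $1$, leaving $p_{\widetilde w}=\left(\begin{smallmatrix}(1,1)&0\\0&(0,\,1-dc)\end{smallmatrix}\right)$ with $1-dc=(1-ss^*)\otimes(1-ss^*)$. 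Subtracting $[1]$ gives $[\widetilde{\mathrm p}]$ on the nose, and pushing forward by $(f^{U(1)})_*$ (Lemma~\ref{511}) yields~$[\mathrm p]$. The idea you are missing is therefore not a homotopy of idempotents but the choice of a \emph{coisometric} lift of the scalar unitary, which collapses the Milnor formula without any further manipulation.
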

\noindent
The interplay between group representations, topology and operator algebras described as in \eqref{maineq},
all contributing to unravelling the different facets of the K-theory of $\mathbb{C}P_H^2$, can be subsumed as follows: 
\begin{center}
\begin{tikzpicture}[font=\scriptsize]

\begin{scope}[font=\large,inner sep=5pt]
\node[draw] (a) at (0,0) {K-theory};
\node[draw,above] (b) at (90:2.5) {Topology};
\node[draw,below right] (c) at (-40:3) {Op.~Theory};
\node[draw,below left] (d) at (220:3) {Rep.~Theory};
\end{scope}

\begin{scope}[-To,shorten >=3pt,shorten <=3pt]
\path (b) edge node[fill=white] {Milnor clutching} (a);
\path (c) edge node[fill=white] {Toeplitz proj.} (a);
\path (d) edge node[fill=white] {Assoc.~vect.~bundles} (a);
\path (b.south east) edge[bend left=20] node[fill=white] {Quantization} (c);
\path (d.south east) edge[bend right=20] node[fill=white] {Unitary reps.} (c.south west);
\path (d) edge[bend left=20] node[fill=white] {Hopf fibrns.} (b.south west);
\end{scope}

\end{tikzpicture}
\end{center}

Furthermore, recall that in \cite{hr17},  the $K_0$-class $[L_1]-[1]$ was determined as one of free generators of 
\mbox{$K_0(C(\mathbb{C}P_H^2))\cong\mathbb{Z}^3$} 
in terms of projections in the  Toeplitz C*-algebra~$\mathcal{T}$.
Combining it with Theorem~\ref{mainresult},
we obtain the complete description of the free generators of $K_0(C(\mathbb{C}P_H^2))$ both in terms of associated projective modules
(the Atiyah--Todd picture) and  projections in  $\mathcal{T}$ (the Toeplitz picture).
\begin{cor}\label{AT-T}
The Atiyah--Todd-type and the  Toeplitz-type free generators of the group $K_0(C(\mathbb{C}P_H^2))\cong \mathbb{Z}^3 $  
are related as follows:
\begin{align*}
[1]&=\big[(1,1,1)\big],\\
[1]-[L_1] &=\big[(1-ss^*)\otimes 1, 0, 1\otimes (1-ss^*)\big] ,\\
[L_1\oplus L_{-1}]-2[1] &= \big[\big(0, 0, (1-ss^{*})\otimes (1-ss^{*})\big)\big].
\end{align*}
\end{cor}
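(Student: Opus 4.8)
The plan is to read Corollary~\ref{AT-T} as the line-by-line assembly of three already-available facts inside the pullback presentation of $C(\mathbb{C}P_H^2)$ arising from the multipushout construction. First I would fix that presentation: the cover of $\mathbb{C}P_H^2$ by three coordinate charts realizes $C(\mathbb{C}P_H^2)$ as the limit of a diagram whose three principal vertices are copies of a chart algebra built from the Toeplitz algebra $\mathcal{T}$, so that the projections displayed in the statement live in $\mathcal{T}\otimes\mathcal{T}$ and the overlap maps are symbol-type maps $\mathcal{T}\to C(S^1)$, $s\mapsto u$. Under such a presentation an element is a triple $(a_0,a_1,a_2)$ of chart elements satisfying the overlap conditions, and the single fact I would record once and for all is that the vacuum projection $1-ss^*$ lies in the kernel of every symbol map (being the rank-one projection onto the vacuum, it maps to $1-uu^*=0$). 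This is precisely what makes each of the three triples in the statement a well-defined projection in the limit algebra.

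With this in hand, the three equalities are treated separately. The first, $[1]=[(1,1,1)]$, is immediate, since the unit of the limit algebra is the constant triple. The third, $[L_1\oplus L_{-1}]-2[1]=[(0,0,(1-ss^*)\otimes(1-ss^*))]$, is nothing but the equality $[L_1\oplus L_{-1}]-2[1]=[\mathrm{p}]$ furnished by Theorem~\ref{mainresult} (see~\eqref{maineq}), with $\mathrm{p}=(0,(1-ss^*)\otimes(1-ss^*))$; the only point to settle is the reconciliation of the pair notation used for $\mathrm{p}$ with the triple notation of the corollary. I would do this by unfolding the iterated (two-stage) pullback used in Theorem~\ref{mainresult} into the three-chart limit, tracking which chart each slot occupies, and using that the support of $(1-ss^*)\otimes(1-ss^*)$ sits in the last chart and restricts to $0$ on both overlaps meeting it, which forces the first two slots to vanish.

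The second equality, $[1]-[L_1]=[((1-ss^*)\otimes 1,0,1\otimes(1-ss^*))]$, is where I would invoke \cite{hr17}, in which the class $[L_1]-[1]$ was already identified with an explicit Toeplitz projection generating a $\mathbb{Z}$-summand of $K_0(C(\mathbb{C}P_H^2))\cong\mathbb{Z}^3$. I would transcribe that projection into the present triple convention and negate to pass from $[L_1]-[1]$ to $[1]-[L_1]$, verifying along the way that the triple $((1-ss^*)\otimes 1,0,1\otimes(1-ss^*))$ satisfies the overlap conditions recorded above (each nonzero entry restricts to $0$ under the symbol map forgetting the relevant vacuum, and both restrict to $0$ on the overlap with the middle chart). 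Since Theorem~\ref{mainresult} and \cite{hr17} already certify that the left-hand classes are free generators of $\mathbb{Z}^3$, no freeness or independence needs to be reproved; the corollary is exactly the translation of the Atiyah--Todd generators into the Toeplitz picture.

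The main obstacle is organizational rather than analytic: aligning three conventions — the pair/two-stage pullback notation of Theorem~\ref{mainresult}, the three-chart limit presentation of the multipushout, and the indexing of \cite{hr17} — so that the overlap (cocycle) conditions hold verbatim for each displayed triple and the signs come out correctly. The one substantive kernel inside this bookkeeping is the compatibility check that each triple is a genuine projection in the limit algebra, which rests entirely on the elementary fact that the symbol maps annihilate the vacuum projections $1-ss^*$.
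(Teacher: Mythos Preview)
Your proposal is correct and matches the paper's own argument: the corollary is not given a separate proof but is stated as the immediate combination of Theorem~\ref{mainresult} (yielding the third line) with the result of \cite{hr17} (yielding the second line), the first line being trivial. The only caveat is your phrase ``negate to pass from $[L_1]-[1]$ to $[1]-[L_1]$'': negating a $K_0$-class does not by itself produce a projection representative, so in practice you must simply transcribe the identity already present in \cite{hr17} (which gives the projection for $[1]-[L_1]$ directly) rather than perform any genuine negation.
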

\noindent
An immediate bonus of the right-hand side presentation of the generators is the fact that they are all in the positive cone of $K_0(C(\mathbb{C}P_H^2))$,
which is impossible in the classical case as the only rank-zero vector bundle is the zero bundle.
Furthermore, using the index pairing, we show that neither
$C(\C P^2_H)\mathrm{p}$ nor $C(\C P^2_H)(1-\mathrm{p})$ are modules associated to
the Peter--Weyl comodule algebra~$\mathcal{P}_{U(1)}(C(S^5_H))$.

Observe also that the $K_{0}$-class of the projection $1-ss^*$ expressing $K_0(C(\mathbb{C}P_H^2))$
is the nontrivial K-theory index of the Fredholm operator $s^*\in \mathcal{T}$.
To compare this phenomenon with the classical setting, we 
 identify $\mathcal{T}$ as the C*-algebra $C(D_q)$ of the quantum disc~\cite{kl93}.  
 Unlike the classical disc, the quantum disc is noncontractible~\cite{dhn20}, and affords nontrivial projections.
In contrast with this, the classical disc provides no non-trivial projections. However, to the classical complex projective plane, we can apply
the Atiyah--J{\"a}nich theorem (\cite{aj65}, \cite[Theorem~A.1]{ma67}) to conclude that
$K^{0}(\mathbb{C}P^{2})=[\mathbb{C}P^{2}, \mathcal{F}]$. Therefore, as $\mathbb{C}P^{2}$ can be built by Cartesian products followed by
 iterated one-side-cofibration pushouts with  discs as contractible building blocks, 
the right-hand side can be expressed by the $K$-theory indices of Fredholm operators, one associated with every disc. Finally, note that, while the
 left-hand side of the last two equations in Corollary~\ref{AT-T}
 admits a well-defined classical limit, the 
projections appearing on the right-hand side make  sense only in the quantum case. 

The paper is organized as follows. We start with the basics of quantum groups and Hopf algebras.
 Next, we recall the construction of Milnor idempotent and prove two  elementary general results in K-theory  needed later on.
 We end the preliminaries by unravelling the equivariant aspects of pullback diagrams.
 In the next section, we recollect definitions of  the plethora of quantum spaces appearing in our paper explainng their pushout structures.
 In the fourth section, we prove our main general result providing a formula for the modules associated to piecewise cleft principal 
 comodule algebras. Then we complete the section by computing the third generator of both $K_0(C(\mathbb{C}P_q^2))$ and $K_0(C(\mathbb{C}P_H^2))$
 in terms of associated modules. We end the paper by computing these generators in terms of elementary projections in the Toeplitz 
 C*-algebra~$\mathcal{T}$.

\section{Preliminaries}

\subsection{Compact quantum groups and principal comodule algebras}
In what follows, we will frequently use actions and coactions.
Let $(H,\Delta)$ be a compact quantum group, and $A$ be a C*-algebra. An action of a compact quantum group $(H,\Delta)$ on $A$ is given
by a right $H$-coaction $\delta_R:A\to A\otimes_{\mathrm{min}} H$.  We refer to C*-algebras endowed with an action of $(H,\Delta)$ 
as $H$-C*-algebras. If 
$H=C(G)$, where $G$ is a compact Hausdorff group, we will also use the name $G$-C*-algebra.

In what follows, all our C*-tensor products will be with nuclear C*-algebras, so due to the lack of ambiguity we 
simplify the notation using $\otimes$ to stand for the C*-tensor product. However, whenever $\otimes$ is used not in between two C*-algebras,
it means  the algebraic tensor product.

Recall that $H=C(G)$ has a comultiplication, counit and antipode obtained by dualizing the product, unit and inverse in $G$. Explicitly,
\begin{align}
\Delta &:H\longrightarrow H\otimes H\cong C(G\times G) &
\varepsilon &:H\longrightarrow \C &
S &:H\longrightarrow H \nonumber\\
\Delta &(f)(g_1,g_2) :=f(g_1g_2)
&
\varepsilon &(f) :=f(e)
&
S &(f)(g) :=f(g^{-1})
\end{align}
for all $f\in H$, $g,g_1,g_2\in G$ and with $e$ denoting the neutral element.

Given a strongly continuous action $\alpha$ of a compact group $G$ on a C*-algebra $A$, the dual right coaction is defined by 
\begin{equation}\label{eq:22}
\setlength{\arraycolsep}{2pt}
\delta_R\colon A\longrightarrow\!\!
\begin{array}[t]{rl}A\otimes C(G) &\xrightarrow{\;\; \cong \;\;}C(G,A) \\
a\otimes f &\longmapsto \big\{ g\mapsto a\,f(g) \big\}, \end{array}
 \qquad
\delta_R(a)(g):=\alpha_g(a) \;\forall\; a\in A,\,g\in G .
\end{equation}

Next, let $k$ be a field and $\mathcal{H}$ be a Hopf algebra over $k$.
The cotensor product of a right comodule $M$ with a coaction 
$\delta_R:M\to M\otimes \mathcal{H}$ with a left comodule $N$ with a coaction 
$\delta_L:N\to \mathcal{H}\otimes N$ is defined as the difference kernel
\begin{equation}
M\mathbin{\Box}^{\mathcal{H}} N:=\ker\Big(\delta_R\otimes\id-\id\otimes\delta_L\colon M\otimes N
\longrightarrow M\otimes \mathcal{H}\otimes N\Big).
\end{equation}

We also need to recall the notion of a \emph{principal comodule algebra} \cite{hajac2011piecewise} and the concept of a 
\emph{strong connection}~\cite{hajac1996strong}. Let $\mathcal{P}$ be a right $\mathcal{H}$-comodule algebra and 
$\mathcal{B}:=\mathcal{P}^{\co\mathcal{H}}$ be the coaction-invariant subalgebra. Now, the canonical map 
\[
\mathrm{can}:\mathcal{P}\otimes_{\mathcal{B}}\mathcal{P}\longrightarrow \mathcal{P}\otimes\mathcal{H} , \qquad
p\otimes p' \longmapsto p\hspace{1pt}p'_{(0)}\otimes p'_{(1)} ,
\]
is a left $\mathcal{P}$-module and right $\mathcal{H}$-comodule map with respect to the multiplication of $\mathcal{P}$ on the left factors and 
the coaction of $\mathcal{H}$ on the right factors. (Here we use the Heyneman--Sweedler notation $p'_{(0)}\otimes p'_{(1)}$ 
with the summation sign suppressed for the right coaction
applied to~$p'$.)
We say that $\mathcal{P}$ is principal if the canonical map is bijective and there exists a strong connection, i.e.~a unital bi-colinear map 
$\ell:\mathcal{H}\to\mathcal{P}\otimes\mathcal{P}$ satisfying
\[
\mathrm{can}\circ\ell=1_{\mathcal{P}}\otimes \id_{\mathcal{H}} .
\]

We are now ready to show that, if $V$ is a left $\mathcal{H}$-comodule and $\mathcal{P}$ admits a character, 
then the ``fiber'' of the left $\mathcal{B}$-module $\mathcal{P}\mathbin{\Box}^{\mathcal{H}}V$
is isomorphic to~$V$. (Much as above, in what follows, we will use the Heyneman--Sweedler notation $v_{(-1)}\otimes v_{(0)}$ 
with the summation sign suppressed for the left
coaction applied to~$v\in V$.)
\begin{prop}\label{prop:21}
Let $\mathcal{P}$ a principal $\mathcal{H}$-comodule algebra admitting a character, $V$ be a left $\mathcal{H}$-comodule, and $\mathcal{B}$ the 
coaction-invariant subalgebra. 
Then the vector space $k\otimes_{\mathcal{B}}(\mathcal{P}\mathbin{\Box}^{\mathcal{H}}V)$ is isomorphic to~$V$.
\end{prop}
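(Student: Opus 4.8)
The plan is to exhibit two explicit, mutually inverse $k$-linear maps between $V$ and the fibre $k\otimes_{\mathcal{B}}(\mathcal{P}\mathbin{\Box}^{\mathcal{H}}V)$, both built from the character $\pi\colon\mathcal{P}\to k$ and the strong connection $\ell\colon\mathcal{H}\to\mathcal{P}\otimes\mathcal{P}$, written $\ell(h)=h^{\langle1\rangle}\otimes h^{\langle2\rangle}$ with summation suppressed. Since $\pi$ is an algebra map, its restriction $\pi|_{\mathcal{B}}$ is a character of $\mathcal{B}$, turning $k$ into the right $\mathcal{B}$-module $\mathcal{B}/\ker(\pi|_{\mathcal{B}})$, so that $k\otimes_{\mathcal{B}}(-)$ is the associated fibre functor. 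The evaluation map $\Phi(1\otimes_{\mathcal{B}}\sum_i p_i\otimes v_i):=\sum_i\pi(p_i)v_i$ descends to the fibre precisely because $\pi$ is multiplicative, so that $\pi(bp_i)=\pi(b)\pi(p_i)$ matches the right $\mathcal{B}$-action on $k$. In the opposite direction I would set $\Psi(v):=1\otimes_{\mathcal{B}}\big(\pi(v_{(-1)}{}^{\langle1\rangle})\,v_{(-1)}{}^{\langle2\rangle}\otimes v_{(0)}\big)$.

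First I would verify that $\Psi$ is well defined, i.e.\ that the bracketed element lies in $\mathcal{P}\mathbin{\Box}^{\mathcal{H}}V$. Applying $\delta_R$ to its first leg and using the right $\mathcal{H}$-colinearity of the second leg of $\ell$, namely $h^{\langle1\rangle}\otimes(h^{\langle2\rangle})_{(0)}\otimes(h^{\langle2\rangle})_{(1)}=(h_{(1)})^{\langle1\rangle}\otimes(h_{(1)})^{\langle2\rangle}\otimes h_{(2)}$, together with coassociativity of the coaction on $V$, turns the cotensor condition into an identity; this is a short Sweedler computation. The composite $\Phi\circ\Psi=\id_V$ is then immediate: it reduces to $\pi(v_{(-1)}{}^{\langle1\rangle}v_{(-1)}{}^{\langle2\rangle})\,v_{(0)}$, and from $\mathrm{can}\circ\ell=1\otimes\id$ one obtains, by applying $\id\otimes\varepsilon$, the relation $h^{\langle1\rangle}h^{\langle2\rangle}=\varepsilon(h)1_{\mathcal{P}}$, so the expression collapses to $\varepsilon(v_{(-1)})v_{(0)}=v$ by counitality.

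The substance of the argument is the opposite composite $\Psi\circ\Phi=\id$. For $x=\sum_i p_i\otimes v_i\in\mathcal{P}\mathbin{\Box}^{\mathcal{H}}V$ I would start from the defining equality $\sum_i p_{i(0)}\otimes p_{i(1)}\otimes v_i=\sum_i p_i\otimes v_{i(-1)}\otimes v_{i(0)}$, insert $\ell$ into the central $\mathcal{H}$-leg, and multiply the first two $\mathcal{P}$-legs. Applying $\pi$ to the resulting first leg yields $\sum_i\pi(p_{i(0)})\,\pi((p_{i(1)})^{\langle1\rangle})(p_{i(1)})^{\langle2\rangle}\otimes v_i=\sum_i\pi(p_i)\,P(v_i)$, where $P(v_i)$ denotes the representative used in $\Psi$. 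Thus it remains to see that the left-hand element $z$ represents the same class as $x$ in the fibre. Here I would use the reconstruction identity coming from bijectivity of $\mathrm{can}$: since $\mathrm{can}\big(p_{(0)}(p_{(1)})^{\langle1\rangle}\otimes_{\mathcal{B}}(p_{(1)})^{\langle2\rangle}\big)=p_{(0)}\otimes p_{(1)}=\mathrm{can}(1\otimes_{\mathcal{B}}p)$, injectivity gives $p_{(0)}(p_{(1)})^{\langle1\rangle}\otimes_{\mathcal{B}}(p_{(1)})^{\langle2\rangle}=1\otimes_{\mathcal{B}}p$ in $\mathcal{P}\otimes_{\mathcal{B}}\mathcal{P}$, whence, after applying $\pi\otimes_{\mathcal{B}}\id$, the equality $1\otimes_{\mathcal{B}}\pi(p_{(0)})\pi((p_{(1)})^{\langle1\rangle})(p_{(1)})^{\langle2\rangle}=1\otimes_{\mathcal{B}}p$ in $k\otimes_{\mathcal{B}}\mathcal{P}$. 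Feeding this back leg by leg shows that $x$ and $z$ have the same image in $(k\otimes_{\mathcal{B}}\mathcal{P})\otimes V$.

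The main obstacle, and the one delicate point, is upgrading this last equality from $(k\otimes_{\mathcal{B}}\mathcal{P})\otimes V$ to $k\otimes_{\mathcal{B}}(\mathcal{P}\mathbin{\Box}^{\mathcal{H}}V)$: as $k=\mathcal{B}/\ker(\pi|_{\mathcal{B}})$ is not flat over $\mathcal{B}$, the functor $k\otimes_{\mathcal{B}}(-)$ need not preserve the kernel defining the cotensor product, so the inclusion-induced map $\iota\colon k\otimes_{\mathcal{B}}(\mathcal{P}\mathbin{\Box}^{\mathcal{H}}V)\to(k\otimes_{\mathcal{B}}\mathcal{P})\otimes V$ is not a priori injective. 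I would resolve this by invoking principality in the form that $\mathcal{P}\mathbin{\Box}^{\mathcal{H}}V$ is a direct summand of $\mathcal{P}\otimes V$ as a left $\mathcal{B}$-module, the splitting being furnished by $\ell$; a retraction survives any base change, so $\iota$ is injective and the two classes coincide, giving $\Psi\circ\Phi=\id$. Together with $\Phi\circ\Psi=\id_V$ this proves the asserted isomorphism. Conceptually, the same three ingredients say that the fibre $k\otimes_{\mathcal{B}}\mathcal{P}$ is $\mathcal{H}$ and that it commutes with $-\mathbin{\Box}^{\mathcal{H}}V$, reducing the statement to the standard identity $\mathcal{H}\mathbin{\Box}^{\mathcal{H}}V\cong V$.
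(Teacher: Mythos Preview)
Your argument is correct, but it takes a different route from the paper's. The paper gives a five-line chain of natural isomorphisms
\[
k\otimes_{\mathcal{B}}(\mathcal{P}\Box^{\mathcal{H}}V)\cong k\otimes_{\mathcal{P}}\big(\mathcal{P}\otimes_{\mathcal{B}}(\mathcal{P}\Box^{\mathcal{H}}V)\big)\cong k\otimes_{\mathcal{P}}\big((\mathcal{P}\otimes_{\mathcal{B}}\mathcal{P})\Box^{\mathcal{H}}V\big)\cong k\otimes_{\mathcal{P}}\big((\mathcal{P}\otimes\mathcal{H})\Box^{\mathcal{H}}V\big)\cong k\otimes_{\mathcal{P}}(\mathcal{P}\otimes V)\cong V,
\]
first rewriting $k\otimes_{\mathcal{B}}(-)$ as $k\otimes_{\mathcal{P}}(\mathcal{P}\otimes_{\mathcal{B}}(-))$ via the character, then invoking flatness of $\mathcal{P}$ over $\mathcal{B}$ to commute the tensor with the cotensor kernel, and finally applying the canonical map. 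Your approach is more hands-on: you write down explicit mutually inverse maps built from the strong-connection legs, and at the one delicate point you use the $\mathcal{B}$-linear \emph{splitting} of $\mathcal{P}\Box^{\mathcal{H}}V\hookrightarrow\mathcal{P}\otimes V$ furnished by $\ell$ rather than flatness of $\mathcal{P}$. Both arguments draw on principality, but through different consequences of it. The paper's proof is shorter and more conceptual; yours has the advantage of producing the isomorphism by an explicit formula and of only requiring the direct-summand property, which follows immediately from the strong connection without any appeal to flatness. Your closing remark that the fibre $k\otimes_{\mathcal{B}}\mathcal{P}$ is $\mathcal{H}$ and that fibring commutes with $\Box^{\mathcal{H}}V$ is essentially the paper's argument in compressed form, so you have in fact identified both proofs.
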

\begin{proof}
Using the assumptions, we compute:
\begin{align}
k\otimes_{\mathcal{B}}(\mathcal{P}\mathbin{\Box}^{\mathcal{H}}V) &\cong
k\otimes_{\mathcal{P}}\big(\mathcal{P}\otimes_{\mathcal{B}}(\mathcal{P}\mathbin{\Box}^{\mathcal{H}}V)\big) \nonumber\\
&\cong
k\otimes_{\mathcal{P}}\big((\mathcal{P}\otimes_{\mathcal{B}}\mathcal{P})\mathbin{\Box}^{\mathcal{H}}V\big) \nonumber\\
&\cong
k\otimes_{\mathcal{P}}\big((\mathcal{P}\otimes\mathcal{H})\mathbin{\Box}^{\mathcal{H}}V\big) \nonumber\\
&\cong
k\otimes_{\mathcal{P}}(\mathcal{P}\otimes V) \nonumber\\
&\cong V .
\end{align}
In the second step, we used the flatness of $\mathcal{P}$ over $\mathcal{B}$ guaranteed by the principality of~$\mathcal{P}$, and the third isomorphism 
is induced by the canonical map.
\end{proof}

\subsection{Pullback diagrams of algebras}\label{Milnor}

Although the term  ``pullback'' is used to denote different things in distinct contexts, herein it stands only for the limit of diagrams of the form 
$A_1\xrightarrow{\;\;\;\pi_1\,\;\;}A_{12}
\xleftarrow{\;\;\,\pi_2\;\;\;}A_2$.
The limit of $A_1\xrightarrow{\;\;\;\pi_1\,\;\;}A_{12}
\xleftarrow{\;\;\,\pi_2\;\;\;}A_2$  is a universal pair of morphisms  
$A_1\xleftarrow{\;\;\;p_1\,\;\;}A\xrightarrow{\;\;\,p_2\;\;\;}A_2$
 such that $\pi_1\circ p_1=\pi_2\circ p_2$. Here, the universality means that, for any other pair of morphisms  
 $A_1\xleftarrow{\;\;\;k_1\,\;\;}Z\xrightarrow{\;\;\,k_2\;\;\;}A_2$ such that $\pi_1\circ k_1=\pi_2\circ k_2\,$, there exists a unique morphism
$\langle k_1,k_2\rangle: Z\rightarrow A$ such that
$p_i\circ\langle k_1,k_2\rangle = k_i$, $i=1,2$. All categories we use in this paper (e.g., unital C*-algebras, $H$-C*-algebras, modules, comodules) 
admit pullbacks. 

It is a general fact in category theory that a morphism of diagrams induces a morphism of limits. 
We will use this fact multiple times in the following special case.
Assume that we have two pullbacks in a category $\mathcal{C}$: 
\begin{itemize}
\item a pullback 
$A_1\xleftarrow{\;\;\;p_1\,\;\;}A\xrightarrow{\;\;\,p_2\;\;\;}A_2$
of
$A_1\xrightarrow{\;\;\pi_1\;\;}A_{12}\xleftarrow{\;\;\pi_2\;\;}A_2$,  and 
\item a pullback
$B_1\xleftarrow{\;\;\;q_1\,\;\;}B\xrightarrow{\;\;\,q_2\;\;\;}B_2$
of
$B_1\xrightarrow{\;\;\rho_1\;\;}B_{12}\xleftarrow{\;\;\rho_2\;\;}B_2$.
\end{itemize}
Furthermore, assume that we have
morphisms $\phi_1: A_1 \to B_1$, $\phi_2:A_1 \to B_2$ and
\mbox{$\phi_{12}:A_{12}\to B_{12}$} such that the below solid-line diagram  commutes:
\begin{equation}\label{twopullcommthm}
\begin{tikzpicture}[scale=1.8]

\node (A) at (1,2) {$A$};
\node (B) at (0,1) {$A_1$};
\node (C) at (2,1) {$A_2$};
\node (D) at (1,0) {$A_{12}$};
\node (E) at (5.5,2) {$B$};
\node (F) at (4.5,1) {$B_1$};
\node (G) at (6.5,1) {$B_2$.};
\node (H) at (5.5,0) {$B_{12}$};

\path[->,font=\scriptsize,inner sep=5pt]
	(A) edge node[left] {$p_1$} (B)
	(A) edge node[right] {$p_2$} (C)
	(B) edge node[left] {$\pi_1$} (D)
	(C) edge node[pos=0.8,right] {$\pi_2$} (D)
	(E) edge node[pos=0.2,left] {$q_1$} (F)
	(E) edge node[right] {$q_2$} (G)
	(F) edge node[left] {$\rho_1$} (H)
	(G) edge node[right] {$\rho_2$} (H)
	(A) edge[bend left=10,dashed] node[above] {$\phi$} (E)
	(B) edge[bend right=15] node[below,pos=0.6] {$\phi_1$} (F)
	(C) edge[bend left=15] node[above,pos=0.4] {$\phi_2$} (G)
	(D) edge[bend right=10] node[below] {$\phi_{12}$} (H);
			
\end{tikzpicture}
\end{equation}
Then there exists a unique map 
$\phi: A\rightarrow B$ which makes the whole diagram \eqref{twopullcommthm} commute.

When $A_1\xrightarrow{\;\;\pi_1\;\;}A_{12}\xleftarrow{\;\;\pi_2\;\;}A_2$ is a diagram in the category of sets, abelian groups,
vector spaces, modules, comodules, algebras, and the like, its limit is given by the equalizer
\[
A=A_1\times_{A_{12}}A_2 :=\big\{(a_1,a_2)\;\big|\;\pi_1(a_1)=\pi_2(a_2) \big\} ,
\]
with maps $A_1\xleftarrow{\;\;\mathrm{pr}_1\;\;}A\xrightarrow{\;\;\mathrm{pr}_2\;\;}A_2$ given by the canonical projections onto the two factors. 
In \eqref{twopullcommthm}, if both $A=A_1\times_{A_{12}}A_2$ and $B=B_1\times_{B_{12}}B_2$, then the map $\phi$ is given by
\begin{equation}\label{eq:mapphi}
\phi(a_1,a_2)=\big(\phi_1(a_1),\phi_2(a_2)\big) .
\end{equation}

We will now recall the clutching construction of a noncommutative vector bundle~\cite{m-j71,bdh15,dhhmw12}.
It is given in terms of the pullback of
C*-algebras and *-homomorphisms $\rho_1$ and $\rho_2$
such that at least one of them is surjective.
For starters, given a pullback diagram of C*-algebras
\begin{equation}\label{eq-pullback2}
\begin{tikzpicture}[scale=0.8]

\node (A) at (2,3.6) {$B$};
\node (B) at (0,1.8) {$B_1$};
\node (C) at (4,1.8) {$B_2$};
\node (D) at (2,0) {$B_{12}$};

\path[->,font=\scriptsize,inner sep=2pt] (A) edge node[above left] {$q_1$} (B)
	(A) edge node[above right] {$q_2$} (C)
	(B) edge node[below left] {$\rho_1$} (D)
	(C) edge[->>] node[below right] {$\rho_2$} (D);
			
\end{tikzpicture}
\end{equation}
with $\rho_2$ surjective, we recall that there is an associated six-term exact sequence in K-theory~\cite{hilgert1986mayer}:
\begin{equation}\label{eq:verify}
\begin{tikzpicture}[xscale=4,yscale=2.3]

\node (a) at (0,1) {$K_0(B)$};
\node (b) at (1,1) {$K_0(B_1)\oplus K_0(B_2)$};
\node (c) at (2,1) {$K_0(B_{12})$};
\node (d) at (0,0) {$K_1(B_{12})$};
\node (e) at (1,0) {$K_1(B_1)\oplus K_1(B_2)$};
\node (f) at (2,0) {$K_1(B).$};

\path[->,font=\scriptsize]
	(a) edge node[above] {$(q_{1*},q_{2*})$} (b)
	(b) edge node[above] {$\rho_{2*}-\rho_{1*}$} (c)
	(c) edge node[right] {$\partial_{01}$} (f)
	(f) edge node[above] {$(q_{1*},q_{2*})$} (e)
	(e) edge node[above] {$\rho_{2*}-\rho_{1*}$} (d)
	(d) edge node[left] {$\partial_{10}$} (a);

\end{tikzpicture}
\end{equation}
An explicit formula for the odd-to-even connecting homomorphism $\partial_{10}$ is due to Milnor and is defined as follows.
Given $[a]\in K_1(B_{12})$ with a representative $a\in GL_n(B_{12})$,
choose $c, d\in M_n(B_2)$ such that
$M_n(\rho_2)(c)=a$ and $M_n(\rho_2)(d)=a^{-1}$. (Here $M_n(\rho_2)$ is $\rho_2$ applied entrywise to matrices.)
Then $\partial_{10}([a])=[p_a]-[I_n]$, 
where (see \cite[Theorem~2.2]{dhhmw12}):
\begin{equation}
\label{MilnProjEq}
p_a:=\left(
\begin{array}{cc}
(1, c(2-dc)d) & (0, c(2-dc)(1-dc))\\
(0, (1-dc)d) & (0, (1-dc)^2)
\end{array}
\right) \in M_{2n}(B) .
\end{equation}
This projection has the following interpretation in terms of the pullback of modules.
Let $V$ be a finite-dimensional complex vector space and
$\chi: B_{12}\otimes V\rightarrow B_{12}\otimes V$ be an isomorphism of left $B_{12}$-modules. We construct a finitely generated
projective left $B$-module \mbox{$M(B_1\otimes V, B_2\otimes V, \chi)$} as follows (see \cite{dhhmw12}).
It is defined  as the pullback of the free left
$B_1$-module $B_1\otimes V$ and the free left $B_2$-module $B_2\otimes V$:
\begin{equation}\label{eq-vector-bundle}
\begin{tikzpicture}[xscale=3.5,yscale=1.7]

\node (A) at (0,2) {$M(B_1\otimes V, B_2\otimes V, \chi)$};
\node (B) at (-1,1) {$B_1\otimes V$};
\node (C) at (1,1) {$B_2\otimes V$};
\node (D) at (-1,0) {$B_{12}\otimes V$};
\node (E) at (1,0) {$B_{12}\otimes V.$};

\path[->,font=\scriptsize] (A) edge (B.north east)
	(A) edge  (C.north west)
	(B) edge node[left] {$\rho_1\otimes\id$} (D)
	(C) edge node[right] {$\rho_2\otimes\id$} (E)
	(D) edge node[above] {$\chi$} (E);
			
\end{tikzpicture}
\end{equation}

\begin{thm}[\protect{\cite[Theorem~2.1]{dhhmw12}}]
\label{thm-Milnor}
Consider the pullback diagram \eqref{eq-pullback2}, with $\rho_2$ surjective.
Let $n:=\dim V$, let $E:=M(B_1\otimes V, B_2\otimes V, \chi)$ be the left $B$-module  in \eqref{eq-vector-bundle} and let $a \in GL(n,B_{12}) $ be the 
representative matrix of the isomorphism
$$
\chi:B_{12}\otimes V\longrightarrow B_{12}\otimes V 
$$
 in \eqref{eq-vector-bundle}. Then $E\cong B^{2n}p_a$ as left $B$-modules, where $p_a$ is the idempotent \eqref{MilnProjEq}.
\end{thm}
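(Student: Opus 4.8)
The plan is to reduce the assertion to an explicit comparison of idempotent modules and, along the way, to expose the origin of the formula \eqref{MilnProjEq}. First I would fix a basis of $V$, identifying $B_i\otimes V\cong B_i^{\,n}$ so that $\chi$ is represented by the matrix $a\in GL(n,B_{12})$ of the statement and each $\rho_i\otimes\id$ becomes $M_n(\rho_i)$ applied entrywise. With the convention that puts the $B_{12}$-action on the right, the module \eqref{eq-vector-bundle} becomes
\[
E=\big\{(\xi_1,\xi_2)\in B_1^{\,n}\times B_2^{\,n}\ \big|\ M_n(\rho_1)(\xi_1)\,a=M_n(\rho_2)(\xi_2)\big\}.
\]
Writing $e:=1-dc$, the preliminary task is to verify that $p_a$ is a bona fide idempotent in $M_{2n}(B)$: each entry, a pair in $M_n(B_1)\times M_n(B_2)$, satisfies the pullback compatibility because $\rho_2(1-dc)=0=\rho_2(1-cd)$, and $p_a^2=p_a$ follows from a direct computation organized around $dc=1-e$.

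The conceptual core is to see where $p_a$ comes from. I would double the clutching datum to $\mathrm{diag}(a,a^{-1})\in GL(2n,B_{12})$ and use its classical factorization into three elementary matrices and one signed permutation. Lifting each factor through the surjection $\rho_2$, with $c$ and $d$ serving as lifts of $a$ and $a^{-1}$, produces an \emph{invertible} $W\in GL(2n,B_2)$ with $M_{2n}(\rho_2)(W)=\mathrm{diag}(a,a^{-1})$. Since the doubled clutching datum lifts to an invertible matrix over $B_2$, the module it clutches is free: $(\eta_1,\eta_2)\mapsto(\eta_1,\eta_2 W^{-1})$ is an isomorphism onto $B^{2n}$. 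As $\mathrm{diag}(a,a^{-1})$ is block diagonal, this doubled module splits as $E\oplus E'$, with $E$ the summand carried by the first $V$-block; transporting the projection onto that summand through the trivialization $(\,\cdot\,)W^{-1}$ yields an idempotent in $M_{2n}(B)$ whose image is isomorphic to $E$. A short matrix computation with $W$ and $W^{-1}$ identifies this idempotent with $p_a$, which simultaneously proves $E\cong B^{2n}p_a$ and accounts for the exact entries in \eqref{MilnProjEq}.

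Equivalently, and more self-containedly, I would write down the two maps this construction dictates and check them by hand. Define the left $B$-linear map $\Psi\colon E\to B^{2n}$ by $\Psi(\xi_1,\xi_2)=\big((\xi_1,\xi_2 d),(0,\xi_2 e)\big)$, the pairs recording $B_1$- and $B_2$-components; since $\rho_2(\xi_2 e)=0$ and $M_n(\rho_1)(\xi_1)=M_n(\rho_2)(\xi_2 d)$, a short check places $\Psi(\xi_1,\xi_2)$ in $B^{2n}p_a$. Going back, define $\Phi$ sending a vector of $B^{2n}p_a$, whose $B_1$-first-block is $w$ and whose $B_2$-blocks are $(x,y)$, to $\big(w,\,xc(1+e)+ye\big)\in E$. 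Then $\Phi\circ\Psi=\id_E$ is immediate from $dc(1+e)+e^2=1$, while $\Psi\circ\Phi=\id$ uses precisely the two identities encoding membership in the image of $p_a$, namely $x=xc(1+e)d+yed$ and $y=xc(1+e)e+ye^2$.

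The step I expect to be the main obstacle is ensuring that the lift $W$ is genuinely invertible over $B_2$, rather than a mere preimage of $\mathrm{diag}(a,a^{-1})$; this is exactly where surjectivity of $\rho_2$ is indispensable, since an individual invertible matrix over $B_{12}$ need not lift to an invertible one, and only the factorization into elementaries (each lifting to an invertible elementary over $B_2$) secures invertibility of the product. The rest is careful bookkeeping---tracking which products become ``$\rho_2$-small'' so that the constructed elements really satisfy the pullback constraints, and matching the transported projection with \eqref{MilnProjEq}. In the direct approach the same subtlety resurfaces as the necessity of the full idempotent relations for $p_a$: the naive formula $\xi_2=xc+y$ fails precisely because $c$ and $d$ are only inverse after applying $\rho_2$.
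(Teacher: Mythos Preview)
The paper does not prove this theorem: it is quoted verbatim from \cite[Theorem~2.1]{dhhmw12} and used as a black box, so there is no ``paper's own proof'' to compare against. Your proposal is a correct and complete reconstruction of the standard Milnor doubling argument that underlies that reference: factor $\mathrm{diag}(a,a^{-1})$ into elementaries, lift each through the surjection $\rho_2$ to obtain an honest invertible $W\in GL_{2n}(B_2)$, trivialize the doubled clutched module, and transport the obvious rank-$n$ projection. Your explicit inverse maps $\Psi$ and $\Phi$ check out (the identities $dc(1+e)+e^2=1$ and the two idempotent relations for $p_a$ are exactly what is needed), and your diagnosis of where surjectivity of $\rho_2$ enters is accurate. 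One small remark on presentation: you should make explicit that for an element of $B^{2n}p_a$ the $B_1$-component of the second block is automatically zero (this follows from the $B_1$-part of $p_a$ being $\mathrm{diag}(I_n,0)$), since your definition of $\Phi$ silently uses this.
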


Next, let us recall an explicit formula for the even-to-odd connecting homomorphism $\partial_{01}$, which is due to Bott. Let $[p]\in K_0(B_{12})$ 
be represented by a projection $p\in M_k(B_{12})$. Choose a selfadjoint lifting $Q$ of $p$ to $B_2$ (i.e.~$M_k(\rho_2)(Q)=p$). 
Then (see \cite[Theorem~2.3]{dhhmw12}):
\begin{equation}\label{eq:Bottpartial}
\partial_{01}[p]=[(I_k,e^{2\pi iQ})] \in K_1(B) .
\end{equation}
With this, we can now prove the following simple, possibly well known and certainly extremely useful, result.
\begin{thm}\label{thm:transfer}
Assume that we have a commutative diagram of C*-algebras as in \eqref{twopullcommthm}, with $\pi_2$ and $\rho_2$ surjective. Then we have the 
following induced commutative diagram in K-theory:
\begin{center}
\begin{tikzpicture}[xscale=4.5,yscale=2.3,inner sep=2pt]

\node (a) at (0.4,1) {$K_0(A)$};
\node (b) at (1.4,1) {$K_0(A_1)\oplus K_0(A_2)$};
\node (c) at (2.4,1) {$K_0(A_{12})$};
\node (d) at (0,0.2) {$K_1(A_{12})$};
\node (e) at (1,0.2) {$K_1(A_1)\oplus K_1(A_2)$};
\node (f) at (2,0.2) {$K_1(A)$};

\node (g) at (0.4,-1) {$K_0(B)$};
\node (h) at (1.4,-1) {$K_0(B_1)\oplus K_0(B_2)$};
\node (i) at (2.4,-1) {$K_0(B_{12})$};
\node (l) at (0,-1.8) {$K_1(B_{12})$};
\node (m) at (1,-1.8) {$K_1(B_1)\oplus K_1(B_2)$};
\node (n) at (2,-1.8) {$K_1(B)$.};

\path[-To,font=\footnotesize]
	(a) edge node[fill=white,pos=0.39,inner sep=4pt] {} node[right,pos=0.65] {$\phi_*$} (g)
	(b) edge node[fill=white,pos=0.39,inner sep=4pt] {} node[right,pos=0.65] {$(\phi_{1*},\phi_{2*})$}  (h)
	(g) edge[shorten >=-2pt] node[fill=white,pos=0.93,inner sep=3pt] {} node[above,pos=0.4] {$(q_{1*},q_{2*})$} (h)
	(h) edge node[fill=white,pos=0.51,inner sep=4pt] {} node[above,pos=0.6] {$\rho_{2*}-\rho_{1*}$} (i)
	(c) edge node[right] {$\phi_{12*}$} (i)
	(d) edge node[left] {$\phi_{12*}$} (l)
	(e) edge node[right,pos=0.35] {$(\phi_{1*},\phi_{2*})$} (m)
	(f) edge node[right,pos=0.35] {$\phi_*$} (n);

\path[-To,font=\footnotesize]
	(a) edge node[above] {$(p_{1*},p_{2*})$} (b)
	(b) edge node[above] {$\pi_{2*}-\pi_{1*}$} (c)
	(c) edge node[below right] {$\partial_{01}$} (f)
	(f) edge node[above,pos=0.3] {$(p_{1*},p_{2*})$} (e)
	(e) edge node[above,pos=0.43] {$\pi_{2*}-\pi_{1*}$} (d)
	(d) edge node[above left] {$\partial_{10}$} (a);

\path[-To,font=\footnotesize]
	(i) edge node[below right] {$\partial_{01}$} (n)
	(n) edge node[above] {$(q_{1*},q_{2*})$} (m)
	(m) edge node[above] {$\;\rho_{2*}-\rho_{1*}$} (l)
	(l) edge node[above left] {$\partial_{10}$} (g);

\end{tikzpicture}
\end{center}
Here the top and bottom faces in this diagram are the six-term exact sequences of the two pullback squares in \eqref{twopullcommthm}.
\end{thm}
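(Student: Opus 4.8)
The plan is to verify that each of the six ``side faces'' connecting the top hexagon (the six-term sequence of the $A$-pullback) to the bottom hexagon (the six-term sequence of the $B$-pullback) commutes; the exactness of the two hexagons themselves is already guaranteed by \cite{hilgert1986mayer}, and the hypotheses that $\pi_2$ and $\rho_2$ be surjective are exactly what make the explicit connecting maps available. Four of these faces are purely functorial, and two encode the \emph{naturality} of the connecting homomorphisms $\partial_{01}$ and $\partial_{10}$. Throughout I would use the two families of identities expressing commutativity of the solid part of \eqref{twopullcommthm}, namely $\phi_{12}\circ\pi_i=\rho_i\circ\phi_i$ and $q_i\circ\phi=\phi_i\circ p_i$ for $i=1,2$, the latter being precisely the defining property of the induced map $\phi$.

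For the four functorial faces I would simply apply the functor $K_*$ to these identities. The squares built on the edges $(p_{1*},p_{2*})$ and $(q_{1*},q_{2*})$ commute because $q_{i*}\circ\phi_*=\phi_{i*}\circ p_{i*}$, which is $K_*$ applied to $q_i\circ\phi=\phi_i\circ p_i$; the squares built on the edges $\pi_{2*}-\pi_{1*}$ and $\rho_{2*}-\rho_{1*}$ commute because $\rho_{i*}\circ\phi_{i*}=\phi_{12*}\circ\pi_{i*}$, which is $K_*$ applied to $\rho_i\circ\phi_i=\phi_{12}\circ\pi_i$, and subtracting the two resulting identities is an equality of additive maps. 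This disposes of four faces at once, in both the even and the odd degrees.

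The remaining two faces I would treat directly from the explicit Milnor and Bott formulas together with the componentwise description \eqref{eq:mapphi} of $\phi$. For the $\partial_{10}$-face I would start with $[a]\in K_1(A_{12})$ represented by $a\in GL_n(A_{12})$ and lifts $c,d\in M_n(A_2)$ as in \eqref{MilnProjEq}. The crucial observation is that $c':=M_n(\phi_2)(c)$ and $d':=M_n(\phi_2)(d)$ are admissible lifts of $M_n(\phi_{12})(a)$ and its inverse, since $M_n(\rho_2)(c')=M_n(\rho_2\circ\phi_2)(c)=M_n(\phi_{12}\circ\pi_2)(c)=M_n(\phi_{12})(a)$, and similarly for $d'$. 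Because the Milnor projection \eqref{MilnProjEq} is assembled from the unit of $A_1$, the zero element, and polynomial expressions in $c,d$, and because $\phi$ acts as the unital homomorphism $(a_1,a_2)\mapsto(\phi_1(a_1),\phi_2(a_2))$, applying $M_{2n}(\phi)$ entrywise sends $p_a$ to the Milnor projection $p_{M_n(\phi_{12})(a)}$ computed from $c',d'$. Hence $\phi_*(\partial_{10}[a])=\phi_*([p_a]-[I_n])=[p_{\phi_{12}(a)}]-[I_n]=\partial_{10}(\phi_{12*}[a])$.

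The $\partial_{01}$-face runs along the same lines using \eqref{eq:Bottpartial}: given $[p]\in K_0(A_{12})$ with selfadjoint lift $Q\in M_k(A_2)$, the element $Q':=M_k(\phi_2)(Q)$ is selfadjoint and lifts $M_k(\phi_{12})(p)$, so $\partial_{01}(\phi_{12*}[p])=[(I_k,e^{2\pi iQ'})]$, whereas $\phi_*(\partial_{01}[p])=[\phi(I_k,e^{2\pi iQ})]=[(I_k,e^{2\pi i\phi_2(Q)})]$; these agree because the unital $*$-homomorphism $\phi_2$ commutes with the exponential, $\phi_2(e^{2\pi iQ})=e^{2\pi iQ'}$. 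I expect the only genuine subtlety to reside in these two faces: one must check that the lifts chosen for the bottom sequence may be taken to be the $\phi_2$-images of the lifts chosen upstairs, which is exactly what $\rho_2\circ\phi_2=\phi_{12}\circ\pi_2$ delivers, and that the connecting-map formulas are preserved by a homomorphism acting componentwise as in \eqref{eq:mapphi}. The four functorial faces are then routine, and together the six establish the claimed commutativity.
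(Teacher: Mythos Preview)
Your proposal is correct and follows essentially the same route as the paper: reduce to the concrete fibre-product description so that $\phi$ acts componentwise, then verify naturality of $\partial_{10}$ and $\partial_{01}$ by pushing the chosen lifts $c,d$ (resp.\ $Q$) forward along $\phi_2$ and invoking $\rho_2\circ\phi_2=\phi_{12}\circ\pi_2$. The paper's proof is in fact terser---it takes the four functorial faces for granted and only writes out the two connecting-map computations---so your more explicit treatment of those faces and of the exponential is, if anything, a slight elaboration of the same argument.
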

\begin{proof}
Without the loss of generality, we can assume that $A=A_1\times_{A_{12}}A_2$ and $B=B_1\times_{B_{12}}B_2$. We must show that the connecting 
homomorphisms commute with the vertical maps.

For the odd-to-even one, let $[a]\in K_1(A_{12})$. Choose $c,d$ such that $M_n(\rho_2)(c)=a$ and $M_n(\rho_2)(d)=a^{-1}$. Let $p_a$ be the 
idempotent \eqref{MilnProjEq}. Next, note that $\phi_{12*}([a])=[a']$, where $a':=M_n(\phi_{12})(a)$. 
Now, let $c':=M_n(\phi_2)(c)$ and $d'=M_n(\phi_2)(d)$. By the 
commutativity of the diagram \eqref{twopullcommthm},
one has $M_n(\rho_2)(c')=a'$ and $M_n(\rho_2)(d')=a'^{-1}$. The corresponding 
idempotent $p_{a'}$ in \eqref{MilnProjEq} satisfies $M_{2n}(\phi)(p_a)=p_{a'}$ and,
since the K-theory classes do not depend on any of the choices made, we see that
\begin{equation}
\phi_*(\partial_{10}([a]))=
[M_{2n}(\phi)(p_a)]-[I_n]=[p_{a'}]-[I_n]=\partial_{10}([a'])=\partial_{10}(\phi_{12*}[a]) ,
\end{equation}
that is
$\phi_*\circ\partial_{10}=\partial_{10}\circ\phi_{12*}$.

For the even-to-odd one, we proceed much in the same way. Given $[p]\in K_0(A_{12})$,  we lift its representative $p\in M_k(A_{12})$
to a selfadjoint 
$Q\in M_k(A_2)$, and call $p':=M_k(\phi_{12})(p)$ and $Q'=M_k(\phi)(Q)$. Since $Q'$ is a selfadjoint lift of $p'$, using \eqref{eq:Bottpartial},
 we compute
\begin{equation}
\partial_{01}\circ\phi_{12*}([p])=\partial_{01}([p'])=[(I_k,e^{2\pi iQ'})]=
\phi_*( [(I_k,e^{2\pi iQ})] )=\phi_*\circ\partial_{01}([p]) .
\end{equation}
Hence, $\partial_{01}\circ\phi_{12*}=\phi_*\circ\partial_{01}$.
\end{proof}

We will now derive a fundamental isomorphism property for the $K$-theory of  pullbacks.
\begin{thm}\label{thm:3outof4}
\label{prop-funct-pull-back-K} Assume that we have a commutative diagram in the category of C*-algebras as in \eqref{twopullcommthm}, 
with $\pi_2$ and $\rho_2$ surjective.
Assume also that the *-homorphisms $\phi_1$, $\phi_2$ and $\phi_{12}$ induce isomorphisms of $K$-groups:
$$
\phi_{i\ast}:K_*(A_i)\xrightarrow{\;\;\cong\;\;}K_*(B_i),\quad
\phi_{12\ast}:K_*(A_{12})\xrightarrow{\;\;\cong\;\;}K_*(B_{12}).
$$
Then the morphism
$\phi$ also  induces an isomorphim in $K$-theory:
$$
\phi_* : K_*(A)\xrightarrow{\;\;\cong\;\;}K_*(B).
$$
\end{thm}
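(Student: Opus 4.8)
The plan is to read the statement as a diagram chase, resolved by the Five Lemma applied to the two Mayer--Vietoris six-term exact sequences. By Theorem~\ref{thm:transfer}, the vertical maps $\phi_*$, $(\phi_{1*},\phi_{2*})$ and $\phi_{12*}$ organise the top and bottom six-term exact sequences of \eqref{twopullcommthm} into a commutative ladder: the squares involving the connecting homomorphisms $\partial_{01}$ and $\partial_{10}$ commute precisely by Theorem~\ref{thm:transfer}, while the remaining squares commute by the functoriality of $K$-theory applied to the commuting faces of \eqref{twopullcommthm}. The exactness of each row is the Mayer--Vietoris property \eqref{eq:verify}. Thus all the categorical input needed is already in hand, and only a diagram-theoretic argument remains.

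First I would unroll the two cyclic six-term sequences into (periodic) long exact sequences, so that the Five Lemma is literally applicable at each node. In the unrolled ladder the vertical maps repeat, with period six, in the pattern
\begin{equation*}
\phi_*,\;(\phi_{1*},\phi_{2*}),\;\phi_{12*},\;\phi_*,\;(\phi_{1*},\phi_{2*}),\;\phi_{12*},
\end{equation*}
where by hypothesis every occurrence of $(\phi_{1*},\phi_{2*})$ and of $\phi_{12*}$ is an isomorphism, and only the two copies of $\phi_*$ (one sitting on $K_0$, one on $K_1$) are a priori unknown.

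Next I would invoke the Five Lemma at these two unknown nodes. To treat $\phi_*\colon K_0(A)\to K_0(B)$, I consider the five consecutive nodes centred at $K_0(A)\to K_0(B)$: its two immediate neighbours carry $\phi_{12*}$ and $(\phi_{1*},\phi_{2*})$, both isomorphisms, while the two outer neighbours carry $(\phi_{1*},\phi_{2*})$ and $\phi_{12*}$, again isomorphisms, so that the required surjectivity on the outer left and injectivity on the outer right hold automatically. The Five Lemma then forces $\phi_*$ to be an isomorphism on $K_0$. Shifting three positions along the $\Z/6$-period gives the identical configuration centred at $K_1(A)\to K_1(B)$, whence $\phi_*$ is an isomorphism on $K_1$ as well. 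Combining the two yields $\phi_*\colon K_*(A)\xrightarrow{\;\cong\;}K_*(B)$.

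I do not expect a genuine obstacle at this stage: the substantive content---that the connecting homomorphisms $\partial_{01}$ and $\partial_{10}$ are natural with respect to $\phi$---has already been secured in Theorem~\ref{thm:transfer}, and exactness of the two rows is \eqref{eq:verify}. The only care required is the bookkeeping, namely correctly unrolling the periodic sequences and checking that at each of the two nodes carrying $\phi_*$ the four surrounding vertical maps are isomorphisms, hence in particular carry the one-sided surjectivity and injectivity the Five Lemma demands.
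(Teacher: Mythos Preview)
Your proposal is correct and follows essentially the same approach as the paper: both apply Theorem~\ref{thm:transfer} to obtain a commutative ladder between the two Mayer--Vietoris six-term exact sequences and then invoke the Five Lemma at the two nodes carrying $\phi_*$. The only cosmetic difference is that the paper writes out two explicit five-term subdiagrams (one centred at $K_1$, one at $K_0$) rather than unrolling the cyclic sequences into periodic ones, but this is the same argument.
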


\begin{proof}
Without loosing generality, we can assume that $A=A_1\times_{A_{12}}A_2$, $B=B_1\times_{B_{12}}B_2$, and $\phi$ is given by~\eqref{eq:mapphi}.
We are under the hypothesis of Theorem~\ref{thm:transfer}, with two six-term-exact sequences connected by six morphisms
given by $\phi_{12*}$, $(\phi_{1*},\phi_{2*})$ and $\phi_*$ in both the even and the odd degree. The first two induced maps are isomorphisms.

To prove that also $\phi_*$ is  an isomorphism, in both the even and the odd degree, we will use the commutative diagram in 
Theorem~\ref{thm:transfer} and the 
5-lemma. First, we consider the subdiagram
\begin{center}
\begin{tikzpicture}[xscale=3,yscale=2]

\node (b1) at (1,1) {$K_0(A_1)\oplus K_0(A_2)$};
\node (c1) at (2,1) {$K_0(A_{12})$};
\node (d1) at (2.8,1) {$K_1(A)$};
\node (e1) at (3.8,1) {$K_1(A_1)\oplus K_1(A_2)$};
\node (f1) at (4.8,1) {$K_1(A_{12})$};

\node (b2) at (1,0) {$K_0(B_1)\oplus K_0(B_2)$};
\node (c2) at (2,0) {$K_0(B_{12})$};
\node (d2) at (2.8,0) {$K_1(B)$};
\node (e2) at (3.8,0) {$K_1(B_1)\oplus K_1(B_2)$};
\node (f2) at (4.8,0) {$K_1(B_{12})$.};

\path[->,font=\scriptsize]
	(b1) edge (c1)
	(c1) edge node[above] {$\partial_{01}$} (d1)
	(d1) edge (e1)
	(e1) edge (f1)
	(b2) edge (c2)
	(c2) edge node[below] {$\partial_{01}$} (d2)
	(d2) edge (e2)
	(e2) edge (f2)
	(b1) edge node[right] {$(\phi_{1*},\phi_{2*})$} (b2)
	(c1) edge node[right] {$\phi_{12*}$} (c2)
	(d1) edge node[left] {$\phi_*$} (d2)
	(e1) edge node[left] {$(\phi_{1*},\phi_{2*})$} (e2)
	(f1) edge node[left] {$\phi_{12*}$} (f2);

\end{tikzpicture}
\end{center}
Since the rows are exact, and the first two and last two vertical arrows are isomorphisms by hypothesis, it follows that $\phi_*:K_1(A)\to K_1(B)$ is an 
isomorphism from the 5-lemma.

Next, we look at the following subdiagram of the diagram in Theorem~\ref{thm:transfer}:
\begin{center}
\begin{tikzpicture}[xscale=3,yscale=2]

\node (a1) at (0,1) {$K_0(A)$};
\node (b1) at (1,1) {$K_0(A_1)\oplus K_0(A_2)$};
\node (c1) at (2,1) {$K_0(A_{12})$};
\node (e1) at (-2,1) {$K_1(A_1)\oplus K_1(A_2)$};
\node (f1) at (-1,1) {$K_1(A_{12})$};

\node (a2) at (0,0) {$K_0(B)$};
\node (b2) at (1,0) {$K_0(B_1)\oplus K_0(B_2)$};
\node (c2) at (2,0) {$K_0(B_{12})$};
\node (e2) at (-2,0) {$K_1(B_1)\oplus K_1(B_2)$};
\node (f2) at (-1,0) {$K_1(B_{12})$.};

\path[->,font=\scriptsize]
	(a1) edge (b1)
	(b1) edge (c1)
	(e1) edge (f1)
	(a2) edge (b2)
	(b2) edge (c2)
	(e2) edge (f2)
	(a1) edge node[right] {$\phi_*$} (a2)
	(b1) edge node[right] {$(\phi_{1*},\phi_{2*})$} (b2)
	(c1) edge node[right] {$\phi_{12*}$} (c2)
	(e1) edge node[left] {$(\phi_{1*},\phi_{2*})$} (e2)
	(f1) edge node[left] {$\phi_{12*}$} (f2)
	(f1) edge node[above] {$\partial_{10}$} (a1)
	(f2) edge node[below] {$\partial_{10}$} (a2);

\end{tikzpicture}
\end{center}
Again, since the rows are exact, and the first two and last two vertical arrows are isomorphisms by hypothesis,  it follows that 
$\phi_*:K_0(A)\to K_0(B)$ is an isomorphism from the 5-lemma.
\end{proof}

\subsection{Gauge trick for pullback diagrams}
Let $X$ be a compact Hausdorff space equipped with a continuous right action of a compact Hausdorff group $G$.
We regard $X\times G$ as a right $G$-space in two different ways, which we distinguish notationally as follows:
\begin{itemize}
\item We write $X\times G^\bullet$ for the  product $X\times G$ with the $G$-action \mbox{$(x,g) h := (x, gh)$}.
\item We write $X^\bullet\times G^\bullet$ for the same space with  the $G$-action $(x, g) h := (x h,gh)$.
\end{itemize}
Now, there is a $G$-equivariant homeomorphism
\[
\hat\kappa : X\times G^\bullet  \ni
(x, g) \longmapsto (x g, g) \in X^\bullet\times G^\bullet ,
\]
with its inverse given by $\hat\kappa^{-1}(x,g) := (x g^{-1},g)$.

Next, let $A$ be a unital $G$-C*-algebra and $H := C(G)$.
Much as above, using \eqref{eq:22} we equip the C*-algebra $A\otimes H$ with two right $H$-coactions:
\begin{equation}
\begin{split}
A^\bullet\otimes H^\bullet &\ni a\otimes h\longmapsto
a\sw{0}\otimes h\sw{1}\otimes a\sw{1}h\sw{2} \in (A^\bullet\otimes H^\bullet)\otimes H , \\
A\otimes H^\bullet &\ni a\otimes h\longmapsto a\otimes
h\sw{1}\otimes h\sw{2} \in (A\otimes H^\bullet)\otimes H .
\end{split}
\end{equation}
Then the following map is an $H$-equivariant isomorphism of C*-algebras:
\begin{equation}\label{kappa}
\kappa:A^\bullet\otimes H^\bullet \ni
a\otimes h\longmapsto a\sw{0}\otimes a\sw{1}h
\in A\otimes H^\bullet ,
\end{equation}
\noindent
with its inverse given by $\kappa^{-1}(a\otimes h):=a\sw{0}\otimes S(a\sw{1})h$.

We now explain how we will use these maps to ``gauge'' a pullback diagram. Consider a pullback diagram of $H$-C*-algebras
of the form
\begin{equation}
\begin{tikzpicture}[scale=1.6]

\node (A) at (1,2) {$A$};
\node (B) at (0,1) {$A_1$};
\node (C) at (2,1) {$A_2^\bullet\otimes H^\bullet$};
\node (D) at (1,0) {$A_{12}^\bullet\otimes H^\bullet$};

\path[->,font=\scriptsize,inner sep=1pt]
	(A) edge node[above left] {$p_1$} (B)
	(A) edge node[above right] {$p_2$} (C)
	(B) edge node[below left] {$\pi_1$} (D)
	(C) edge node[below right] {$\pi_2\otimes\id$} (D);
			
\end{tikzpicture} .
\end{equation}
We can transform it into an analogous diagram where the coactions on the bottom and right nodes are on the rightmost factors:
\begin{equation}\label{eq:gaugetrick}
\begin{tikzpicture}[scale=1.8]

\node (A) at (1,2) {$A$};
\node (B) at (0,1) {$A_1$};
\node (C) at (2,1) {$A_2^\bullet\otimes H^\bullet$};
\node (D) at (1,0) {$A_{12}^\bullet\otimes H^\bullet$};
\node (E) at (5.5,2) {$A$};
\node (F) at (4.5,1) {$A_1$};
\node (G) at (6.5,1) {$A_2\otimes H^\bullet$};
\node (H) at (5.5,0) {$A_{12}\otimes H^\bullet$};

\path[->,font=\scriptsize,inner sep=5pt]
	(A) edge node[left] {$p_1$} (B)
	(A) edge node[right] {$p_2$} (C)
	(B) edge node[left] {$\pi_1$} (D)
	(C) edge node[pos=0.8,right] {$\pi_2\otimes\id$} (D)
	(E) edge node[pos=0.2,left] {$p_1$} (F)
	(E) edge node[right] {$\kappa\circ p_2$} (G)
	(F) edge node[left] {$\kappa\circ\pi_1$} (H)
	(G) edge node[right] {$\pi_2\otimes\id$} (H)
	(A) edge[dashed,bend left=10] (E)
	(B) edge[bend right=15] node[below,pos=0.6] {$\id$} (F)
	(C) edge[bend left=15] node[above,pos=0.4] {$\kappa$} (G)
	(D) edge[bend right=10] node[below] {$\kappa$} (H);
			
\end{tikzpicture} .
\end{equation}
The dashed map is the isomorphism induced on pullbacks. Note that the commutativity of $\pi_2$ with $\kappa$ is 
a consequence of the $H$-equivariance of $\pi_2$. Observe also that the multiplication in $H$ is a *-homomorphism because $H$ is commutative.

\section{Quantum spaces as pushouts}

We denote by $u$ the unitary generator of $C(S^1)$ given by the inclusion of $S^1$ into $\C$, and by $s$ the generating isometry
of the Toeplitz C*-algebra~$\mathcal{T}$.
There is a natural $U(1)$-action $\alpha$ 
on $\mathcal{T}$ given by rephasing the isometry~$s$:
$\alpha_\lambda(s):=\lambda s\;\forall\;\lambda\in U(1)$.
There is a well-known short exact sequence of $U(1)$-C*-algebras
\begin{equation*}
0\longrightarrow\mathcal{K}\longrightarrow\mathcal{T}\xrightarrow{\ \sigma\ } C(S^1)\longrightarrow 0 ,
\end{equation*}
where $\sigma$ is the symbol map mapping $s$ to $u$,
$\mathcal{K}$ is the ideal of compact operators. Here we identify $S^1$ with $U(1)$, 
and the $U(1)$-coaction on $S^1$ is given by the group multiplication.

The goal of this section is to identify the needed C*-algebras of quantum spaces as pullback C*-algebras. In the next sections, we will abuse the notation by using the same symbol to denote a given C*-algebra and its pullback presentation.

\subsection{The Woronowicz compact quantum group $SU_q(2)$}
For $0<q<1$, we define $C(SU_q(2))$ as the universal C*-algebra given by generators $\alpha$ and $\gamma$ subject to the following relations~\cite{w-sl87}:
\begin{equation}
\alpha\gamma = q\gamma\alpha, \quad \alpha\gamma^*=q\gamma^*\alpha,
\quad \gamma\gamma^*=\gamma^*\gamma,\quad 
\alpha^*\alpha+\gamma^*\gamma=1,\quad 
\alpha\alpha^*+q^2\gamma\gamma^*=1.
\end{equation}
The fundamental corepresentation matrix of $SU_q(2)$ can be
written as:
\begin{equation}\label{funrep}
U:=\left(\begin{array}{cc}
\alpha&-q\gamma^{*}\\
\gamma &\alpha^{*}
\end{array}\right).
\end{equation}
The assignment $\alpha\mapsto\pi(\alpha):=u^*$, $\gamma\mapsto\pi(\gamma)=0$, defines a *-homomorphism
\begin{equation}
\pi:C(SU_q(2))\longrightarrow C(U(1)),
\end{equation}
which restricts to a homomorphism of Hopf algebras.
Now we can define both left and right coactions of $C(U(1))$ on $C(SU_q(2))$ as follows:
 \begin{equation}\label{eq:resu1}
 \begin{split}
 \delta_L&:=(\pi\otimes\id)\circ\Delta\colon C(SU_q(2))\longrightarrow
 C(U(1))\otimes C(SU_q(2)),\\
 \delta_R&:=(\id\otimes\pi)\circ\Delta\colon C(SU_q(2))\longrightarrow
 C(SU_q(2))\otimes C(U(1)),
 \end{split}
 \end{equation}
where $\Delta$ is the comultiplication.

Our opposite to the usual definition of $\pi$ is dictated by the $U(1)$-equivariant identification of $C(SU_q(2))$ with the pullback $U(1)$-C*-algebra in the 
following diagram \cite[(3.28)]{dhhmw12}:
\begin{equation}\label{eq:pullsuq2}
\begin{tikzpicture}[xscale=1.2,inner sep=4pt]

\node (A) at (2,3.6) {$C(SU_q(2))$};
\node (B) at (0,1.8) {$C(S^1)$};
\node (C) at (4,1.8) {$\mathcal{T}\otimes C(S^1)^\bullet$};
\node (D) at (2,0) {$C(S^1)\otimes C(S^1)^\bullet$};

\path[->,font=\scriptsize,inner sep=2pt] (A) edge node[above left] {$p_L$} (B)
	(A) edge node[above right] {$p_R$} (C)
	(B) edge node[below left] {$\Delta$} (D)
	(C) edge node[below right] {$\sigma\otimes\id$} (D);
			
\end{tikzpicture} .
\end{equation}
Here, $\Delta$ is the comultiplication of $C(S^1)$, and the maps $p_L$ and $p_R$ are given by:
\begin{equation}\label{eq:p}
\setlength{\arraycolsep}{1pt}
\begin{array}{rlrl}
p_L(\alpha)&=u^*, & p_L(\gamma)&=0 , \\[5pt]
p_R(\alpha)&=\rho(\alpha)\otimes u^*, \qquad\quad & p_R(\gamma)&=\rho(\gamma)\otimes u^* .
\end{array}
\end{equation}
Here $\rho(\alpha)$ and $\rho(\gamma)$ are the elements of Toeplitz algebra that after applying the standard unilateral-shift representation on $\ell^2(\mathbb{N})$ become, respectively, the following operators:
\begin{equation}
\rho(\alpha)e_n=\sqrt{1-q^{2n}}\,e_{n-1}, \qquad\rho(\gamma)e_n=q^ne_n .
\end{equation}
Here $\{e_n\}_{n\geq 0}$ is the standard orthonormal basis of $\ell^2(\N)$.
Note that $\sigma(\rho(\alpha))=u^*$ and $\sigma(\rho(\gamma))=0$.
To end with, observe that the above pullback presentation of $C(SU_q(2))$ proves that the isomorphism class of this C*-algebra does not depend on $q$.

\subsection{The Calow--Matthes quantum sphere $S^3_H$}
The C*-algebra $C(S^3_H)$~\cite{cm00} is the universal C*-algebra generated by two commuting isometries $s_1$ and $s_2$ subject to the relation $(1-s_1s_1^*)(1-s_2s_2^*)=0$. The obvious $U(1)$-action is given on generators by $\alpha_\lambda(s_i)=\lambda s_i\;\forall\;\lambda\in U(1)$.
It was shown in \cite{baum2005k} that this C*-algebra can be equivariantly identified with a certain pullback C*-algebra. After applying the gauge trick as in \eqref{eq:gaugetrick} to the pullback diagram in \cite{baum2005k}, we obtain the following pullback diagram of $U(1)$-C*-algebras:
\begin{equation}\label{eq:S3qS3H}
\begin{tikzpicture}[xscale=1.2,inner sep=4pt]

\node (A) at (2,3.6) {$C(S^3_H)$};
\node (B) at (0,1.8) {$C(S^1)^\bullet\otimes\mathcal{T}^\bullet$};
\node (C) at (4,1.8) {$\mathcal{T}\otimes C(S^1)^\bullet$};
\node (D) at (2,0) {$C(S^1)\otimes C(S^1)^\bullet$};

\path[->,font=\scriptsize] (A) edge (B)
	(A) edge (C)
	(B) edge node[left=2pt] {$\kappa\circ (\id\otimes\sigma)$} (D)
	(C) edge node[right=2pt] {$\sigma\otimes\id$} (D);
			
\end{tikzpicture} .
\end{equation}

The pullback presentations of 
$C(SU_q(2))$ and $C(S^3_H)$ yield the commutative diagram
\begin{equation}\label{ProjectiveMapsOmegaSmall}
\begin{tikzpicture}[scale=1.8]

\node (A) at (1,2) {$C(SU_q(2))$};
\node (B) at (0,1) {$C(S^1)$};
\node (C) at (2,1) {$\mathcal{T}\otimes C(S^1)^\bullet$};
\node (D) at (1,0) {$C(S^1)\otimes C(S^1)^\bullet$};
\node (E) at (5.5,2) {$C(S^3_H)$};
\node (F) at (4.5,1) {$C(S^1)^\bullet\otimes\mathcal{T}^\bullet$};
\node (G) at (6.5,1) {$\mathcal{T}\otimes C(S^1)^\bullet$};
\node (H) at (5.5,0) {$C(S^1)\otimes C(S^1)^\bullet$};

\path[->,font=\scriptsize,inner sep=5pt]
	(A) edge (B)
	(A) edge (C)
	(B) edge node[left=2pt] {$\Delta$} (D)
	(C) edge node[pos=0.7,right] {$\sigma\otimes\id$} (D)
	(E) edge (F)
	(E) edge (G)
	(F) edge node[left] {$\kappa\circ(\id\otimes\sigma)$} (H)
	(G) edge node[right=-1pt] {$\sigma\otimes\id$} (H)
	(A) edge[bend left=10,dashed] node[above] {$\nu$} (E)
	(B) edge[bend right=15] node[below,pos=0.7] {$\id\otimes 1_{\mathcal{T}}$} (F)
	(C) edge[bend left=15] node[above,pos=0.4] {$\id$} (G)
	(D) edge[bend right=10] node[below] {$\id$} (H);
			
\end{tikzpicture} .
\end{equation}
Explicitly,
\begin{equation}\label{eq:nu226}
\nu(v,t\otimes v'):=(v\otimes 1_{\mathcal{T}},t\otimes v') ,
\qquad \forall\;v,v'\in C(S^1),\; t\in\mathcal{T}.
\end{equation}

\subsection{The Hong--Szyma{\'n}ski quantum 4-ball $B^4_q$}

The C*-algebra of the quantum ball $C(B^4_q)$ is defined in \cite{hong02} as the quantum double suspension $\Sigma^2\mathcal{T}$ of the C*-algebra of the quantum disk. In general, the quantum double suspension $\Sigma^2A$ of a C*-algebra $A$ is isomorphic to the C*-subalgebra of $\mathcal{T}\otimes A$ generated by $s\otimes 1$ and 
$(1-ss^*)\otimes A$, where $s$ here is the isometry generating the Toeplitz algebra \cite[Proposition~2.1]{hong08}.
Thus, $C(B^4_q)$ is isomorphic to the C*-subalgebra of $\mathcal{T}\otimes\mathcal{T}$ generated by $s\otimes 1$ and $(1-ss^*)\otimes s$.
The $U(1)$-action on $C(B^4_q)$ is induced by the diagonal action on $\mathcal{T}\otimes\mathcal{T}$.
The *-homomorphism
\begin{equation}\label{eq:varpi}
\mathcal{T}\otimes\mathcal{T}\longrightarrow C(S^1)\times\big(\mathcal{T}\otimes C(S^1)\big) , \qquad\quad
t\otimes t' \longmapsto \big(\sigma(t)\sigma(t'),t_{(0)}\otimes t_{(1)}\sigma(t')\big) ,
\end{equation}
maps the two generators of $C(B^4_q)$ into $C(SU_q(2))$.
Hence, it restricts to a *-homomorphism
\begin{equation}
\varpi:C(B^4_q)\longrightarrow C(SU_q(2)) .
\end{equation}
One easily checks that this *-homomorphism is $U(1)$-equivariant.

We shall also need the *-homomorphism
\begin{equation}
\omega:\mathcal{T}\otimes\mathcal{T}\longrightarrow C(S^3_H), \qquad\quad
\omega(t\otimes t'):=\big(\sigma(t)\otimes t',t_{(0)}\otimes t_{(1)}\sigma(t')\big) ,
\end{equation}
which is clearly $U(1)$-equivariant for the diagonal action on $\mathcal{T}\otimes\mathcal{T}$.
Now we construct a commutative diagram of $U(1)$-C*-algebras:
\begin{equation}\label{eq:B4q}
\begin{tikzpicture}[inner sep=4pt]

\node (A) at (2,3.6) {$C(B^4_q)$};
\node (B) at (0,1.8) {$C(SU_q(2))$};
\node (C) at (4,1.8) {$\mathcal{T}^\bullet\otimes\mathcal{T}^\bullet$};
\node (D) at (2,0) {$C(S^3_H)$};

\path[->,font=\scriptsize,inner sep=2pt]
	(A) edge node[above left] {$\varpi$} (B)
	(A) edge node[above right] {$\iota$} (C)
	(B) edge node[below left] {$\nu$} (D)
	(C) edge node[below right] {$\omega$} (D);
			
\end{tikzpicture} .
\end{equation}
Here, $\iota$ denotes the inclusion of $C(B^4_q)$ into $\mathcal{T}\otimes\mathcal{T}$.
Note that
$\ker\omega=\mathcal{K}\otimes\mathcal{K}$. If $a\otimes b\in C(B^4_q)$ with $a,b\in\mathcal{K}$, clearly $\varpi(a\otimes b)=0$.
Hence, $\ker\omega\subseteq \ker\varpi$.
Furthermore, since $\omega$ is surjective and $\nu$ is injective, the latter inclusion proves that \eqref{eq:B4q} is a pullback diagram \cite[Proposition~3.1]{pedersen1999}.

\subsection{The Vaksman--Soibelman quantum 5-sphere $S_q^5$} 
Odd-dimensional quantum spheres where introduced 
in \cite{VS91}. Here, we are only interested in the quantum 5-sphere~$S^5_q$.
It follows from \cite[(4.2)]{arici2022} that the $U(1)$-C*-algebra $C(S_q^5)$ can be equivariantly identified with the pullback $U(1)$-C*-algebra in the following diagram:
\begin{equation}\label{eq:diagS5q}
\begin{tikzpicture}[inner sep=4pt,xscale=1.2]

\node (A) at (2,3.6) {$C(S^5_q)$};
\node (B) at (0,1.8) {$C(SU_q(2))$};
\node (C) at (4,1.8) {$C(B^4_q)\otimes C(S^1)^\bullet$};
\node (D) at (2,0) {$C(SU_q(2))\otimes C(S^1)^\bullet$};

\path[->,font=\scriptsize,inner sep=1pt]
	(A) edge (B)
	(A) edge (C)
	(B) edge node[below left] {$\delta_R$} (D)
	(C) edge node[below right] {$\varpi\otimes\id$} (D);
			
\end{tikzpicture} .
\end{equation}
Indeed, we can start with the analogous diagram with the diagonal $U(1)$-actions on the tensor products, and then use the gauge trick \eqref{eq:gaugetrick} to transform it to \eqref{eq:diagS5q}.

\subsection{The Heegaard quantum sphere $S^5_H$} The C*-algebra $C(S^5_H)$ \cite{r-j12,hr17,hnpsz} can be equivariantly identified with the pullback $U(1)$-C*-algebra in the diagram
\begin{equation}\label{eq:S5Hpullback}
\begin{tikzpicture}[inner sep=4pt,xscale=1.2]

\node (A) at (2,3.6) {$C(S^5_H)$};
\node (B) at (0,1.8) {$C(S^3_H)^\bullet\otimes \mathcal{T}^\bullet$};
\node (C) at (4,1.8) {$\mathcal{T}\otimes\mathcal{T}\otimes C(S^1)^\bullet$};
\node (D) at (2,0) {$C(S^3_H)\otimes C(S^1)^\bullet$};

\path[->,font=\scriptsize,inner sep=1pt]
	(A) edge (B)
	(A) edge (C)
	(B) edge node[below left] {$\kappa\circ (\id \otimes \sigma)$} (D)
	(C) edge node[below right] {$\omega\otimes\id$} (D);
			
\end{tikzpicture} .
\end{equation}
There is  a $U(1)$-equivariant *-homomorphism
$f:C(S^5_q)\rightarrow C(S^5_H)$ rendering the below diagram commutative:
\begin{equation}\label{eq-large-diag}
\begin{tikzpicture}[scale=1.8]

\node (A) at (1,2) {$C(S^5_q)$};
\node (B) at (0,1) {$C(SU_q(2))$};
\node (C) at (2,1) {$C(B^4_q)\otimes C(S^1)^\bullet$};
\node (D) at (1,0) {$C(SU_q(2))\otimes C(S^1)^\bullet$};
\node (E) at (5.5,2) {$C(S^5_H)$};
\node (F) at (4.5,1) {$C(S^3_H)^\bullet\otimes\mathcal{T}^\bullet$};
\node (G) at (6.5,1) {$\mathcal{T}\otimes \mathcal{T}\otimes C(S^1)^\bullet$};
\node (H) at (5.5,0) {$C(S^3_H)\otimes C(S^1)^\bullet$};

\path[->,font=\scriptsize,inner sep=5pt]
	(A) edge (B)
	(A) edge (C)
	(B) edge node[left=2pt] {$\delta_R$} (D)
	(C) edge node[pos=0.7,right] {$\varpi\otimes\id$} (D)
	(E) edge (F)
	(E) edge (G)
	(F) edge node[left,pos=0.6] {$\kappa\circ(\id\otimes\sigma)$} (H)
	(G) edge node[right=-1pt] {$\omega\otimes\id$} (H)
	(A) edge[bend left=10,dashed] node[above] {$f$} (E)
	(B) edge[bend right=15] node[below,pos=0.7] {$\nu\otimes 1_{\mathcal{T}}$} (F)
	(C) edge[bend left=15] node[above,pos=0.4] {$\iota\otimes\id$} (G)
	(D) edge[bend right=10] node[below] {$\nu\otimes\id$} (H);
			
\end{tikzpicture} .
\end{equation}
It is straightforward to verify that the two sub-diagrams involving the bottom arrow are commutative. Hence, the universal property of pullbacks implies the 
existence and uniqueness of $f$. Explicitly,
\begin{equation}\label{eq:fexp}
f(a,b\otimes v):=\big(\nu(a)\otimes 1_{\mathcal{T}},\iota(b)\otimes v\big)
\end{equation}
for all $a\in C(SU_q(2))$, $b\in C(B^4_q)$ and $v\in C(S^1)$.

\subsection{The Vaksman--Soibelman quantum complex projective plane $\mathbb{C}P^2_q$}
Complex quantum projective spaces coming from Vaksman--Soibelman spheres were introduced by Meyer in \cite{meyer95}.
In particular, one defines $C(\C P^2_q)$ as the $U(1)$-fixed point subalgebra of $C(S^5_q)$.

Remembering that any pullback diagram of $U(1)$-C*-algebras induces a pullback diagram of fixed-point algebras, from the equivariant pullback diagram \eqref{eq:diagS5q} we obtain the pullback diagram
\begin{equation}\label{eq:CP2qpullback}
\begin{tikzpicture}[inner sep=4pt,xscale=1.2]

\node (A) at (2,3.6) {$C(\C P^2_q)$};
\node (B) at (0,1.8) {$C(S^2_q)$};
\node (C) at (4,1.8) {$C(B^4_q)$};
\node (D) at (2,0) {$C(SU_q(2))$};

\path[->,font=\scriptsize,inner sep=3pt]
	(A) edge (B)
	(A) edge (C)
	(B) edge node[above right] {$\subseteq$} (D)
	(C) edge node[above left] {$\varpi$} (D);
			
\end{tikzpicture} .
\end{equation}
Here $C(S^2_q):=C(SU_q(2))^{U(1)}\xrightarrow{\;\;\subseteq\;\;}C(SU_q(2))$ denotes the inclusion of the C*-algebra of the standard Podle\'s sphere.

\subsection{The multipushout quantum complex projective plane $\mathbb{C}P^2_H$}
The C*-algebra $C(\mathbb{C}P^2_H)$ was originally defined as a multipullback, and can be equivalently presented as an iterated pullback \cite{r-j12}. (This was later generalized tp an arbitrary dimension in \cite{hkz12}.)
Here, following \cite{hr17}, we shall identify $C(\mathbb{C}P^2_H)$ with $C(S^5_H)^{U(1)}$.
As in the preceeding section, from the $U(1)$-equivariant pullback diagram \eqref{eq:S5Hpullback}, we obtain the pullback diagram
\begin{equation}\label{eq:CP2Hpullback}
\begin{tikzpicture}[xscale=2,yscale=1.8]

\node (E) at (5.5,2) {$C(\C P^2_H)$};
\node (F) at (4.5,1) {$\big(C(S^3_H)^\bullet\otimes\mathcal{T}^\bullet\big)^{U(1)}$};
\node (G) at (6.5,1) {$\mathcal{T}\otimes \mathcal{T}$};
\node (H) at (5.5,0) {$C(S^3_H)$};

\path[->,font=\scriptsize,inner sep=2pt]
	(E) edge (F)
	(E) edge (G)
	(F) edge node[below left,yshift=2pt] {$\big(\kappa\circ(\id\otimes\sigma)\big){}^{U(1)}$} (H)
	(G) edge node[below right] {$\omega$} (H);
			
\end{tikzpicture}.
\end{equation}
Here $\big(\kappa\circ(\id\otimes\sigma)\big){}^{U(1)}$ is the map between fixed-point algebras
induced by the map $\kappa\circ(\id\otimes\sigma)$ in \eqref{eq:S5Hpullback}.
The $U(1)$-equivariant diagram \eqref{eq-large-diag} induces the diagram
\begin{equation}\label{ProjectiveMaps}
\begin{tikzpicture}[xscale=2,yscale=1.8,baseline={([yshift=-6pt]current bounding box.center)}]

\node (A) at (1,2) {$C(\C P^2_q)$};
\node (B) at (0,1) {$C(S^2_q)$};
\node (C) at (2,1) {$C(B^4_q)$};
\node (D) at (1,0) {$C(SU_q(2))$};
\node (E) at (5.5,2) {$C(\C P^2_H)$};
\node (F) at (4.5,1) {$\big(C(S^3_H)^\bullet\otimes\mathcal{T}^\bullet\big)^{U(1)}$};
\node (G) at (6.5,1) {$\mathcal{T}\otimes \mathcal{T}$};
\node (H) at (5.5,0) {$C(S^3_H)$};

\path[->,font=\scriptsize,inner sep=5pt]
	(A) edge (B)
	(A) edge (C)
	(B) edge node[left=2pt] {$\subseteq$} (D)
	(C) edge node[pos=0.7,right] {$\varpi$} (D)
	(E) edge (F)
	(E) edge (G)
	(F) edge node[left=2pt,pos=0.6] {$\big(\kappa\circ(\id\otimes\sigma)\big)^{U(1)}$} (H)
	(G) edge node[right=-1pt] {$\omega$} (H)
	(A) edge[bend left=10,dashed] node[above] {$f^{U(1)}$} (E)
	(B) edge[bend right=15] node[below,pos=0.7] {$(\nu\otimes 1_{\mathcal{T}})^{U(1)}$} (F)
	(C) edge[bend left=15] node[above,pos=0.4] {$\iota$} (G)
	(D) edge[bend right=10] node[below] {$\nu$} (H);
			
\end{tikzpicture}.
\end{equation}

\section{Associated-module construction of a Milnor module}
\noindent
Recall first that, for any integer $n$ and for any unital C*-algebra $A$ equipped 
with a $U(1)$-action $\alpha: U(1)\to\mathrm{Aut}(A)$,
we can define the $n$-th spectral subspace (the section module of the associated line bundle with winding number $n$)
as
\begin{equation}\label{spectral}
A_k:=\{a\in A\;|\;\alpha_\lambda(a)=\lambda^ka\}.
\end{equation}
In particular, we can take $A=C(S^{2n+1}_q)$ and $A=C(S^{2n+1}_H)$, and denote the respective spectral subspaces by $\tilde{L}_k$
and $L_k$. 
As was shown in~\cite{hnpsz}, the C*-algebra $C(\mathbb{C}P_H^n)$ of the multipullback
quantum complex projective space $\mathbb{C}P_H^n$ can be realized as a fixed-point subalgebra
for the diagonal $U(1)$-action on the C*-algebra $C(S^{2n+1}_H)$ of the Heegaard quantum 
sphere~$S^{2n+1}_H$. 

Next, we  write $P_1:=(C(S^3_H)^\bullet\otimes\mathcal{T}^\bullet)^{U(1)}$ to shorten the notation.
It was proven in \cite{r-j12} that
\begin{equation}
\label{BasicSplitEq}
\begin{gathered}
K_0(C(\C P^2_H))) =\mathrm{s}(K_0(P_1))\oplus\partial_{10}(K_1(C(S_H^3))) , \\
K_0(P_1) \cong\mathbb{Z}\oplus\mathbb{Z} , \\
\partial_{10}(K_1(C(S_H^3))) \cong\mathbb{Z} .
\end{gathered}
\end{equation}
Here $\partial_{10}$ is the Milnor connecting homomorphism of the diagram \eqref{eq:CP2Hpullback}, and $\mathrm{s}$ is a splitting of the epimorphism 
$K_0(C(\C P^2_H)))\to K_0(P_1)$
induced by \eqref{eq:CP2Hpullback}. Moreover, it was shown in \cite{hr17} that the splitting can be chosen in such a way that 
its image $\mathrm{s}(K_0(P_1))$ is generated by $[1]$ and $[L_1]-[1]$.

The goal of this section is to identify a generator of 
$\partial_{10}(K_1(C(S_H^3)))$, which must be of the form 
$\partial_{10}([a])$ for some invertible matrix~$a\in GL(n,C(S^3_H))$. 
To this end, using again the Mayer--Vietoris six-term exact sequence in K-theory,
first we show  that $K_0(C(\C P^2_q))$ enjoys an analogous decomposition 
as~$K_0(C(\C P^2_H))$:
\begin{equation}
\label{BasicSplitEq2}
K_0(C(\C P^2_q))=\mathrm{t}(K_0(C(S_q^2)))\oplus\partial_{10}(K_1(C(SU_q(2)))).
\end{equation}
Here, $\partial_{10}$ is the Milnor connecting homomorphism of the diagram \eqref{eq:CP2qpullback}, 
and $\mathrm{t}$ is a splitting of the epimorphism $K_0(C(\C P^2_q)))\to K_0(C(S^2_q))$
induced by \eqref{eq:CP2qpullback}.
Next, using \cite{h-pm00} and \cite[Theorem~5.1]{hnpsz} 
(cf.~\cite[Theorem~1.1]{hm}), we can easily see that
$\mathrm{t}$ can be chosen in such a way that 
$[1]$ and $[\tilde{L}_1]-[1]$ generate $\mathrm{t}(K_0(C(S^2_q)))$. 
Moreover, we will show that $[\tilde{L}_1\oplus\tilde{L}_{-1}]-2[1]$ generates $\partial_{10}(K_1(C(SU_q(2))))$.

Finally, using again \cite[Theorem~5.1]{hnpsz}, we employ 
the $U(1)$-equivariant *-homomorphism $f:C(S^5_q)\rightarrow C(S^5_H)$ in \eqref{eq:fexp} to conclude that
\[
(f^{U(1)})_*:K_0(C(\C P^2_q))\longrightarrow K_0(C(\C P^2_H))
\]
maps $[\tilde{L}_n]$ to $[L_n]$ for any $n\in\mathbb{Z}$.
Then we prove that
$(f^{U(1)})_*$ is an isomorphism of $K$-groups. This allows us to infer that
$[L_1\oplus L_{-1}]-2[1]$ generates $\partial_{10}(K_1(C(S^3_H)))$.

\subsection{Modules associated to piecewise cleft coactions}

In this section,  we will unravel the structure of  Milnor's module of Section~\ref{Milnor} 
in the setting of noncommutative  vector bundles associated to compact quantum principal bundles 
obtained by glueing two cleft parts. To this end, we take a Hopf algebra $\mathcal{H}$ 
and consider the following pullback diagram of $\mathcal{H}$-comodule algebras:
\begin{equation}\label{eq:319}
\begin{tikzpicture}[xscale=1.6,yscale=1.5]

\node (A) at (1,2) {$\mathcal{P}$};
\node (B) at (0,1) {$\mathcal{P}_1$};
\node (C) at (2,1) {$\mathcal{P}_2$};
\node (D) at (1,0) {$\mathcal{P}_{12}$};

\path[->,font=\scriptsize,inner sep=1pt] (A) edge (B)
	(A) edge (C)
	(B) edge node[below left] {$\pi_1$} (D)
	(C) edge node[below right] {$\,\pi_2$} (D);
			
\end{tikzpicture} .
\end{equation}
Assume next that
both $\mathcal{P}_1$ and $\mathcal{P}_2$ are \emph{cleft}, i.e.~there exist
unital $\mathcal{H}$-colinear convolution-invertible  maps
\begin{equation}
\gamma_i:\mathcal{H}\longrightarrow\mathcal{P}_i,\quad i=1,2\,,
\end{equation}
called \emph{cleaving maps}.
Here the \emph{convolution product} of linear maps $f,g\colon C\to A$ from a coalgebra $(C,\Delta,\varepsilon)$ to an algebra $(A,m,\eta)$
is defined to be $f\ast g:=m\circ(f\otimes g)\circ\Delta$ with the neutral element being the composition $\eta\circ\varepsilon$ of the counit map
with the unit map.

Furthermore, let
$\pi^{\co\mathcal{H}}_i:\mathcal{P}_i^{\co \mathcal{H}}\longrightarrow \mathcal{P}_{12}^{\co \mathcal{H}}$  be 
the map induced by $\pi_i$, $i=1,2$, and let $V$ be a finite-dimensional vector space that is also a left $\mathcal{H}$-comodule.
Consider now the following isomorphisms
of, respectively, $\mathcal{P}_i^{\co \mathcal{H}}$-modules and $\mathcal{P}_{12}^{\co \mathcal{H}}$-modules given by the cleaving maps:
\begin{equation}\label{lambdaiso}
\setlength{\arraycolsep}{1pt}
\begin{array}{rlrl}
\Lambda_i &:\mathcal{P}_i^{\co \mathcal{H}}\otimes V\longrightarrow \mathcal{P}_i\mathbin{\Box}^{\mathcal{H}} V,
 \quad
& b_i\otimes v &\longmapsto b_i\gamma_i(v\sw{-1})\otimes v\sw{0},
\\[5pt]
\Lambda^i_{12} &:\mathcal{P}_{12}^{\co \mathcal{H}}\otimes V\longrightarrow \mathcal{P}_{12}\mathbin{\Box}^{\mathcal{H}} V,
& b\otimes v &\longmapsto b(\pi_i\circ\gamma_i)(v\sw{-1})\otimes v\sw{0}.
\end{array}
\end{equation}
Note that the fact that $\pi_i$ is an $\mathcal{H}$-colinear unital algebra homomorphism ensures that $\pi_i\circ\gamma_i$ is a cleaving map.
Observe also that the inverses of these isomorphisms are given by the convolution inverses of cleaving maps.
The isomorphisms in \eqref{lambdaiso} give rise to the following commutative diagram:
\begin{equation}\label{MilnorAssocDiag}
\begin{tikzpicture}[scale=2.5,->]

\node (a1) at (0,2) {$\mathcal{P}_1^{\co \mathcal{H}}\otimes V$};
\node (a2) at (3,2) {$\mathcal{P}_2^{\co \mathcal{H}}\otimes V$};
\node (a3) at ($(a1)+(45:1.3)$) {$\mathcal{P}_1\mathbin{\Box}^{\mathcal{H}} V$};
\node (a4) at ($(a2)+(45:1.3)$) {$\mathcal{P}_2\mathbin{\Box}^{\mathcal{H}} V$};
\node (b1) at (0,0) {$\mathcal{P}_{12}^{\co \mathcal{H}}\otimes V$};
\node (b2) at (3,0) {$\mathcal{P}_{12}^{\co \mathcal{H}}\otimes V$};
\node (b3) at ($(b1)+(45:1.3)$) {$\mathcal{P}_{12}\mathbin{\Box}^{\mathcal{H}} V$};
\node (b4) at ($(b2)+(45:1.3)$) {$\mathcal{P}_{12}\mathbin{\Box}^{\mathcal{H}} V$};

\begin{scope}[font=\scriptsize]

\path (a1) edge node[left] {$\pi^{\co\mathcal{H}}_1\otimes\id$} (b1);
\path (a3) edge node[right] {$\pi_1\otimes\id$} (b3);
\path (a4) edge node[right] {$\pi_2\otimes\id$} (b4);
\path (b1) edge[dashed] node[below=2pt] {$\chi$} (b2);
\path (b3) edge node[pos=0.4,above=2pt] {$=$} (b4);
\node[fill=white] at ($(b3)!0.695!(b4)$) {\;\;};
\path (a2) edge node[pos=0.3,left] {$\pi^{\co\mathcal{H}}_2\otimes\id$} (b2);
\path (a1) edge node[above left] {$\Lambda_1$} (a3);
\path (a2) edge node[above left] {$\Lambda_2$} (a4);
\path (b1) edge node[below right] {$\Lambda^1_{12}$} (b3);
\path (b2) edge node[below right] {$\Lambda^2_{12}$} (b4);

\end{scope}

\end{tikzpicture}.
\end{equation}
Here $\chi:=( \Lambda^2_{12})^{-1}\circ \Lambda^1_{12}$. As $\chi$ is an isomorphism of $\mathcal{P}_{12}^{\co \mathcal{H}}$-modules,
it suffices to compute it on $1\otimes v$:
\begin{align}\label{GammaTheorem}
\chi (1\otimes v)&=( \Lambda^2_{12})^{-1}\big((\pi_1\circ\gamma_1)(v_{(-1)})\otimes v_{(0)}\big)
\nonumber\\
&=(\pi_1\circ\gamma_1)(v_{(-2)})(\pi_2\circ\gamma_2^{-1})(v_{(-1)})\otimes v_{(0)}
\nonumber\\
&=
\big((\pi_1\circ\gamma_1)\ast(\pi_2\circ\gamma_2^{-1})\big)(v_{(-1)})\otimes v_{(0)}\,.
\end{align}

Next, we apply the Peter--Weyl functor of \cite{bch17} to use \eqref{GammaTheorem} in the context of C*-algebras.
Let $(H,\Delta)$ be a compact quantum group and let
\begin{equation}\label{EqMiln2}
\begin{tikzpicture}[xscale=1.6,yscale=1.5]

\node (A) at (1,2) {$A$};
\node (B) at (0,1) {$A_1$};
\node (C) at (2,1) {$A_2$};
\node (D) at (1,0) {$A_{12}$};

\path[->,font=\scriptsize] (A) edge (B)
	(A) edge (C)
	(B) edge (D)
	(C) edge (D);
			
\end{tikzpicture}
\end{equation}
be a pullback diagram in the category of unital $H$-C*-algebras.
Denote by $\mathcal{H}$  Woronowicz's Peter--Weyl *-Hopf algebra generated 
by the matrix elements of  finite-dimensional  representations of $(H,\Delta)$.
Furthermore, recall that, for any unital C*-algebra $A$ endowed with a coaction
$\delta_R:A\rightarrow A\otimes_{\min} H$, the Peter--Weyl $\mathcal{H}$-comodule algebra 
\cite[(0.4)]{bch17}
consists of all 
$a\in A$ such that $\delta_R(a)\in A\otimes\mathcal{H}$. (Observe that here it is crucial that the tensor product is algebraic.)
Next, by $\mathcal{P}_H(A)$, $\mathcal{P}_H(A_1)$, $\mathcal{P}_H(A_2)$ and $\mathcal{P}_H(A_{12})$ 
denote the respective Peter--Weyl $\mathcal{H}$-comodule algebras of $H$-C*-algebras $A_1$, $A_2$, and
$A_{12}$. 
Now, to abbreviate notation, put $B:=\mathcal{P}_H(A)^{co\mathcal{H}}$, $B_1:=\mathcal{P}_H(A_1)^{co\mathcal{H}}$, 
$B_2:=\mathcal{P}_H(A_2)^{co\mathcal{H}}$ and 
$B_{12}:=\mathcal{P}_H(A_{12})^{co\mathcal{H}}$
for the respective fixed-point C*-algebras.
Recall also that the Peter--Weyl functor
transforms pullback diagrams into pullback
diagrams \cite[Lemma~2]{bh14}, so applying it to the pullback diagram \eqref{EqMiln2} yields the pullback diagram~\eqref{eq:319}
of Peter--Weyl comodule algebras
\begin{equation}\label{pwpull}
\begin{tikzpicture}[xscale=1.6,yscale=1.5]

\node (A) at (1,2) {$\mathcal{P}_H(A)$};
\node (B) at (0,1) {$\mathcal{P}_H(A_1)$};
\node (C) at (2,1) {$\mathcal{P}_H(A_2)$};
\node (D) at (1,0) {$\mathcal{P}_H(A_{12})$};

\path[->,font=\scriptsize,inner sep=1pt] (A) edge (B)
	(A) edge (C)
	(B) edge node[below left] {$\pi_1$} (D)
	(C) edge node[below right] {$\,\pi_2$} (D);
			
\end{tikzpicture} 
\end{equation}
and the pullback diagram of fixed-point C*-algebras
\begin{equation}\label{pwpullfixed}
\begin{tikzpicture}[xscale=1.6,yscale=1.5]

\node (A) at (1,2) {$B$};
\node (B) at (0,1) {$B_1$};
\node (C) at (2,1) {$B_2$};
\node (D) at (1,0) {$B_{12}$};

\path[->,font=\scriptsize,inner sep=1pt] (A) edge (B)
	(A) edge (C)
	(B) edge node[below left] {$\pi_1^{co\mathcal{H}}$} (D)
	(C) edge node[below right] {$\,\pi_2^{co\mathcal{H}}$} (D);
			
\end{tikzpicture} .
\end{equation}
Therefore, remembering that  the surjectivity
of $A_2\to A_{12}$ implies the surjectivity of $B_2\to B_{12}$, 
we can combine \eqref{GammaTheorem} with Theorem~\ref{thm-Milnor} to obtain:

\begin{thm}\label{thm41}
Let $(H,\Delta)$ be a compact quantum group, let \eqref{EqMiln2} be a pullback diagram in the category of unital $H$-C*-algebras
with $A_2\to A_{12}$ surjective and such that both Peter--Weyl comodule algebras $\mathcal{P}_H(A_1)$ and $\mathcal{P}_H(A_2)$
are cleft with cleaving maps $\gamma_1$ and~$\gamma_2$, respectively. Also, let
$V$ be an $n$-dimensional left comodule over the Woronowicz--Peter--Weyl Hopf algebra $\mathcal{H}$ of~$(H,\Delta)$. Then
\begin{gather*}
\mathcal{P}_H(A)\Box^\mathcal{H}V\cong B^{2n}p_a.
\end{gather*}
Here $p_a$ is the Milnor projection \eqref{MilnProjEq}, $a\in GL_n(B_{12})$, and
$$
a_{kl}:=\big((\pi_1\circ\gamma_1)\ast(\pi_2\circ\gamma_2^{-1})\big)(u_{kl}),\quad
 \sum_{l=1}^n u_{kl}\otimes e_l:=(e_k)_{(-1)}\otimes (e_k)_{(0)},
 $$
 for a basis $\{e_k\}_1^n$ of~$V$.
\end{thm}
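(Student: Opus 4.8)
The plan is to recognize the cotensor module $\mathcal{P}_H(A)\Box^{\mathcal{H}}V$ as a Milnor module of the shape \eqref{eq-vector-bundle} and then quote Theorem~\ref{thm-Milnor}. First I would note that the Peter--Weyl pullback \eqref{pwpull} is an instance of the abstract pullback \eqref{eq:319} with $\mathcal{P}_i=\mathcal{P}_H(A_i)$, so that the cleft trivializations \eqref{lambdaiso}, the commuting square \eqref{MilnorAssocDiag}, and the clutching formula \eqref{GammaTheorem} all apply verbatim; here $B_i=\mathcal{P}_H(A_i)^{\co\mathcal{H}}$, and $\pi_2^{\co\mathcal{H}}\colon B_2\to B_{12}$ is surjective precisely because $A_2\to A_{12}$ is.

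The structural heart of the argument is to show that $-\Box^{\mathcal{H}}V$ carries the pullback \eqref{pwpull} to a pullback of left $B$-modules. The pullback $\mathcal{P}_H(A)=\mathcal{P}_H(A_1)\times_{\mathcal{P}_H(A_{12})}\mathcal{P}_H(A_2)$ is the kernel of the $\mathcal{H}$-colinear map $(p_1,p_2)\mapsto\pi_1(p_1)-\pi_2(p_2)$ out of $\mathcal{P}_H(A_1)\oplus\mathcal{P}_H(A_2)$. Since $-\Box^{\mathcal{H}}V$ is left exact (being defined as a kernel) and commutes with finite direct sums, I would conclude that $\mathcal{P}_H(A)\Box^{\mathcal{H}}V$ is the kernel of the induced map $(\mathcal{P}_H(A_1)\Box^{\mathcal{H}}V)\oplus(\mathcal{P}_H(A_2)\Box^{\mathcal{H}}V)\to\mathcal{P}_H(A_{12})\Box^{\mathcal{H}}V$, i.e.\ the pullback of the two cotensor products along the maps induced by $\pi_1$ and $\pi_2$.

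Next I would trivialize this pullback. The isomorphisms $\Lambda_1,\Lambda_2$ of \eqref{lambdaiso} identify the cotensor products with the free modules $B_1\otimes V$ and $B_2\otimes V$, and the commutativity of \eqref{MilnorAssocDiag} rewrites the two structure maps as $\Lambda^1_{12}\circ(\pi_1^{\co\mathcal{H}}\otimes\id)$ and $\Lambda^2_{12}\circ(\pi_2^{\co\mathcal{H}}\otimes\id)$. Applying the isomorphism $(\Lambda^2_{12})^{-1}$ to the pullback relation absorbs $\Lambda^1_{12},\Lambda^2_{12}$ into the single clutching datum $\chi=(\Lambda^2_{12})^{-1}\circ\Lambda^1_{12}$, yielding $\mathcal{P}_H(A)\Box^{\mathcal{H}}V\cong M(B_1\otimes V,B_2\otimes V,\chi)$ in the notation of \eqref{eq-vector-bundle} with $\rho_i=\pi_i^{\co\mathcal{H}}$.

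Finally I would read the representative matrix of $\chi$ off \eqref{GammaTheorem}: evaluating on $1\otimes e_k$ and expanding $(e_k)_{(-1)}\otimes(e_k)_{(0)}=\sum_l u_{kl}\otimes e_l$ gives $\chi(1\otimes e_k)=\sum_l a_{kl}\otimes e_l$ with $a_{kl}=\big((\pi_1\circ\gamma_1)\ast(\pi_2\circ\gamma_2^{-1})\big)(u_{kl})$. As $\chi$ is an isomorphism of free rank-$n$ modules, $a\in GL_n(B_{12})$---its inverse being assembled from the convolution inverses of the cleaving maps---and this invertibility together with the surjectivity of $\pi_2^{\co\mathcal{H}}$ is exactly the input of Theorem~\ref{thm-Milnor}, which then gives $M(B_1\otimes V,B_2\otimes V,\chi)\cong B^{2n}p_a$. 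I expect the only genuinely delicate point to be the second paragraph: one must make sure that left exactness of the cotensor product really produces the module pullback and that the $\Lambda$'s intertwine the correct structure maps, after which the result follows mechanically from \eqref{GammaTheorem} and Theorem~\ref{thm-Milnor}.
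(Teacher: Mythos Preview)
Your proposal is correct and follows the paper's approach essentially verbatim: the paper likewise sets up the cleft trivializations \eqref{lambdaiso}, assembles the commuting cube \eqref{MilnorAssocDiag}, reads off $\chi$ via \eqref{GammaTheorem}, and then simply says one ``combine[s] \eqref{GammaTheorem} with Theorem~\ref{thm-Milnor}''. Your second paragraph, using left exactness of $-\Box^{\mathcal{H}}V$ to identify $\mathcal{P}_H(A)\Box^{\mathcal{H}}V$ with the module pullback, is an explicit justification of a step the paper leaves tacit in passing from \eqref{MilnorAssocDiag} to the Milnor module; otherwise the arguments coincide.
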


\subsection{Associated-module construction in the Vaksman--Soibelman case}
\label{GroupIsomSect}
Let $A$ be a unital $U(1)$-C*-algebra. Then $\mathcal{P}_{U(1)}(A)$ stands for its
$\mathcal{O}(U(1))$-Peter--Weyl-comodule algebra. 
By applying the Peter--Weyl functor to the pullback diagram \eqref{eq:diagS5q}, we get the pullback diagram
of $\mathcal{O}(U(1))$-comodule *-algebras:
\begin{equation}
\begin{tikzpicture}[inner sep=4pt,xscale=1.2,baseline={([yshift=-4pt]current bounding box.center)}]

\node (A) at (2,3.6) {$\mathcal{P}_{U(1)}(C(S^5_q))$};
\node (B) at (0,1.8) {$\mathcal{P}_{U(1)}(C(SU_q(2)))$};
\node (C) at (4,1.8) {$C(B^4_q)\otimes\mathcal{O}(S^1)^\bullet$};
\node (D) at (2,0) {$C(SU_q(2))\otimes\mathcal{O}(S^1)^\bullet$};

\path[->,font=\scriptsize,inner sep=1pt]
	(A) edge (B)
	(A) edge (C)
	(B) edge node[below left] {$\delta_R$} (D)
	(C) edge node[below right] {$\varpi\otimes\id$} (D);
			
\end{tikzpicture}.
\end{equation}
Here we obtain a restriction of the coaction map $\delta_R$ but use the same symbol to signify the restriction.

Applying now the prolongation functor
$(\,\_\,)\mathbin{\Box}^{\mathcal{O}(U(1))}\mathcal{O}(SU_q(2))$ to the above diagram, we obtain the pullback diagram of $\mathcal{O}(SU_q(2))$-comodule algebras:
\begin{equation}\label{eq:46}
\begin{tikzpicture}[inner sep=4pt,yscale=2,xscale=3.5,baseline={([yshift=-4pt]current bounding box.center)}]

\node (A) at (1,2) {$\mathcal{P}_{U(1)}(C(S^5_q))\mathbin{\Box}^{\mathcal{O}(U(1))}\mathcal{O}(SU_q(2))^\bullet$};
\node (B) at (0,1) {$\mathcal{P}_{U(1)}(C(SU_q(2)))\mathbin{\Box}^{\mathcal{O}(U(1))}\mathcal{O}(SU_q(2))^\bullet$};
\node (C) at (2,1) {$C(B^4_q)\otimes\mathcal{O}(SU_q(2))^\bullet$};
\node (D) at (1,0) {$C(SU_q(2))\otimes\mathcal{O}(SU_q(2))^\bullet$};

\path[->,font=\scriptsize,inner sep=1pt]
	(A) edge (B)
	(A) edge (C)
	(B) edge node[below left] {$\subseteq$} (D)
	(C) edge node[below right] {$\varpi\otimes\id$} (D);
			
\end{tikzpicture}.
\end{equation}
In this diagram, the right $\mathcal{O}(SU_q(2))$-coaction is always induced by the coproduct applied to the rightmost factor of (co)tensor products. The cotensor product is constructed with the left coaction of $\mathcal{O}(U(1))$ on $\mathcal{O}(SU_q(2))$ induced by \eqref{eq:resu1}.

\begin{lemma}\label{LemGenTilda}
Let $U$ be the fundamental representation \eqref{funrep} of $SU_q(2)$. Then 
\begin{equation}\label{theform}
\partial_{10}([U])=[\tilde{L}_1\oplus \tilde{L}_{-1}]-2[1].
\end{equation}
In particular, $[\tilde{L}_1\oplus\tilde{L}_{-1}]-2[1]$ generates~$\partial_{10}\big(K_1(C(SU_q(2)))\big)$.
\end{lemma}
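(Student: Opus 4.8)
The plan is to realize $[\tilde L_1\oplus\tilde L_{-1}]-2[1]$ as a Milnor class $\partial_{10}([a])$ for a concrete clutching matrix $a\in GL_2(C(SU_q(2)))$ assembled from cleaving maps, and then to identify $[a]$ with $[U]$. The natural object to use is the prolongation appearing in \eqref{eq:46}: its top-left corner is an $\mathcal{O}(SU_q(2))$-principal comodule algebra over the base $C(\mathbb{C}P^2_q)$, glued from the two pieces $A_1$ and $A_2$ of that diagram, whose fixed-point pullback is precisely \eqref{eq:CP2qpullback}. I would take $V$ to be the fundamental $2$-dimensional left $\mathcal{O}(SU_q(2))$-comodule, whose matrix of coefficients $(u_{kl})$ is the corepresentation matrix $U$ of \eqref{funrep}. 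The classical shadow of this choice is reassuring: $\mathrm{L}_1\oplus\mathrm{L}_{-1}$ is the bundle associated along the fundamental representation of $SU(2)$ to the prolonged Hopf bundle, with $SU(2)$-valued clutching function $U$, and its Euler-type class is exactly $x^2=[\mathrm{L}_1\oplus\mathrm{L}_{-1}]-2[1]$ of \eqref{eq:12}.

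First I would apply the associated-module construction underlying Theorem~\ref{thm41}, that is, combine \eqref{GammaTheorem} with Theorem~\ref{thm-Milnor}, to the pullback \eqref{eq:46} with $\mathcal{H}=\mathcal{O}(SU_q(2))$ and this $V$. Here one must record that the two pieces are cleft: $A_2=C(B^4_q)\otimes\mathcal{O}(SU_q(2))^\bullet$ is cleft via the obvious map $\gamma_2\colon h\mapsto 1\otimes h$ (with convolution inverse $h\mapsto 1\otimes S(h)$), while $A_1$, being a prolongation of the cleft $U(1)$-extension $\mathcal{O}(SU_q(2))$ over $\mathcal{O}(S^2_q)$, inherits a cleaving map $\gamma_1$. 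This yields $\mathcal{P}\mathbin{\Box}^{\mathcal{O}(SU_q(2))}V\cong B^{4}p_a$ with $B=C(\mathbb{C}P^2_q)$ and the clutching matrix $a_{kl}=\big((\pi_1\circ\gamma_1)\ast(\pi_2\circ\gamma_2^{-1})\big)(u_{kl})$.

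Second, I would identify this associated module with $\tilde L_1\oplus\tilde L_{-1}$. The key is the compatibility between prolongation and restriction of representations: associating the prolongation along the fundamental corepresentation of $SU_q(2)$ agrees, over the common base $C(\mathbb{C}P^2_q)$, with associating the original $U(1)$-bundle $C(S^5_q)$ along the restriction of that corepresentation to $U(1)$. Since $\pi(U)=\mathrm{diag}(u^*,u)$, this restriction is $\mathbb{C}_1\oplus\mathbb{C}_{-1}$, whose associated line modules are the spectral subspaces \eqref{spectral}, namely $\tilde L_1$ and $\tilde L_{-1}$; the fibrewise identification is the computation of Proposition~\ref{prop:21}. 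Hence $[\tilde L_1\oplus\tilde L_{-1}]=[p_a]$, and the Milnor formula \eqref{MilnProjEq} gives $\partial_{10}([a])=[p_a]-[I_2]=[\tilde L_1\oplus\tilde L_{-1}]-2[1]$.

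The main obstacle is the last step, proving $[a]=[U]$ in $K_1(C(SU_q(2)))$. I would make the cleaving maps explicit and compute the convolution $g:=(\pi_1\circ\gamma_1)\ast(\pi_2\circ\gamma_2^{-1})\colon\mathcal{O}(SU_q(2))\to C(SU_q(2))$ on the generators $u_{kl}$, the target being to show that $g$ restricts to the canonical colinear inclusion on matrix coefficients, so that $a=U$ on the nose. Particular care is needed with the conventions entering $\pi_1,\gamma_1,\gamma_2^{-1}$ (transpose, antipode, adjoint), so as to land on $+[U]$ rather than $[U^*]=-[U]$. Once $[a]=[U]$ is secured, the displayed equality \eqref{theform} follows, and the final assertion is immediate: $[U]$ generates $K_1(C(SU_q(2)))\cong\mathbb{Z}$, so $\partial_{10}([U])$ generates $\partial_{10}\big(K_1(C(SU_q(2)))\big)$, which by \eqref{BasicSplitEq2} is the free rank-one complement of the splitting.
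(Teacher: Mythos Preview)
Your proposal is correct and follows essentially the same route as the paper: apply Theorem~\ref{thm41} to the prolongation diagram \eqref{eq:46} with $V=\C^2$ the fundamental left $\mathcal{O}(SU_q(2))$-comodule, identify the associated module with $\tilde L_1\oplus\tilde L_{-1}$ via associativity of the cotensor product (your ``prolongation/restriction'' argument is exactly the chain in \eqref{TlTlEq}), and read off $\partial_{10}([a])=[\tilde L_1\oplus\tilde L_{-1}]-2[1]$.

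The only place where the paper is sharper is what you flag as the ``main obstacle'': you leave $\gamma_1$ implicit and worry about signs and conventions in computing $a$. The paper simply takes $\gamma_1(x)=x_{(1)}\otimes x_{(2)}$ (the coproduct, which is unital, colinear, and convolution-invertible with inverse $x\mapsto S(x_{(1)})\otimes x_{(2)}$), and then the whole convolution collapses by the antipode axiom:
\[
\big((\pi_1\circ\gamma_1)\ast(\pi_2\circ\gamma_2^{-1})\big)(x)=(x_{(1)}\otimes x_{(2)})(1\otimes S(x_{(3)}))=x\otimes 1\equiv x,
\]
so $a=U$ on the nose with no sign ambiguity. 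With this choice your obstacle disappears in one line.
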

\begin{proof}
We apply Theorem \ref{thm41} to \eqref{eq:46} viewed as the diagram \eqref{pwpull} with $H=C(SU_q(2))$.
Now the pullback diagram of fixed-point C*-algebras \eqref{pwpullfixed} becomes~\eqref{eq:CP2qpullback}.
It is easy to see that we have the following cleaving maps:
\begin{equation}
\setlength{\arraycolsep}{1pt}
\begin{array}{rlrl}
\gamma_1 &: \mathcal{O}(SU_q(2))\longrightarrow \mathcal{P}_{U(1)}(SU_q(2))\mathbin{\Box}^{\mathcal{O}(U(1))}\mathcal{O}(SU_q(2)),
\qquad & x &\longmapsto x\sw{1}\otimes x\sw{2},\\[5pt]
\gamma_2 &: \mathcal{O}(SU_q(2))\longrightarrow
C(B^4_q)\otimes \mathcal{O}(SU_q(2)),
& x &\longmapsto 1\otimes x .
\end{array}
\end{equation}
Furthermore, noting that $\pi_1$ is a set inclusion and $\pi_2=\varpi\otimes\id$, we compute:
\begin{equation}\label{IsomMilnEq}
\big((\pi_1\circ\gamma_1)\ast (\pi_2\circ\gamma_2^{-1})\big)(x)=
(x\sw{1}\otimes x\sw{2})(1\otimes S(x\sw{3}))=x .
\end{equation}
Thus, the map  $(\pi_1\circ\gamma_1)\ast (\pi_2\circ\gamma_2^{-1}):\mathcal{H}\to B_{12}$ becomes the inclusion of 
$\mathcal{O}(SU_q(2))$ into $C(SU_q(2))$.

Next, let $V:=\C^2$ be the left $\mathcal{O}(SU_q(2))$-comodule defined via the fundamental representation matrix~$U$ in \eqref{funrep}.
Thus, the left $B$-module $\mathcal{P}_H(A)\Box^\mathcal{H}V$ becomes
\begin{align}
\mathcal{P}_H(A)\Box^\mathcal{H}V &=\big(\mathcal{P}_{U(1)}(C(S^5_q)) \mathbin{\Box}^{\mathcal{O}(U(1))}
\mathcal{O}(SU_q(2))\big)\mathbin{\Box}^{\mathcal{O}(SU_q(2))}\C^2\nonumber\\
&=\mathcal{P}_{U(1)}(C(S^5_q))\mathbin{\Box}^{\mathcal{O}(U(1))}\C^2\nonumber\\
&=\bigl(\mathcal{P}_{U(1)}(C(S^5_q))\mathbin{\Box}^{\mathcal{O}(U(1))}\C^{-}\bigr)\oplus
\bigl(\mathcal{P}_{U(1)}(C(S^5_q))\mathbin{\Box}^{\mathcal{O}(U(1))}\C^{+}\bigr)\nonumber\\
&=\tilde{L}_{-1}\oplus\tilde{L}_{1}\,.
\label{TlTlEq}
\end{align}
Here $\C^\pm$ stands for the left $\mathcal{O}(U(1))$-comodule $\C$ given by the coaction $1\mapsto u^{\pm 1}\otimes 1$.
Finally, applying Theorem~\ref{thm41}, we obtain \eqref{theform}.
Now, to complete the proof, it suffices to note that $[U]$ generates $K_1(C(SU_q(2)))$.
\end{proof}

\subsection{Reducing the multipushout case to the Vaksman--Soibelman case}\label{sec-gaugin}

In this section, we show that the map $f^{U(1)}$ in diagram \eqref{ProjectiveMaps} induces an isomorphim in K-theory. This is the content of next theorem, whose prove requires three lemmas.
We are going to use the K{\"u}nneth theorem \cite[Theorem 23.1.3]{b-b98} and the fact that all our K-theory groups are flat over $\Z$, so the torsion term in the K{\"u}nneth exact sequence disappears.

\begin{lemma}
The map $\iota:C(B^4_q)\to\mathcal{T}\otimes\mathcal{T}$ induces an isomorphism in K-theory.
\end{lemma}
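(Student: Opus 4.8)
We have $C(B^4_q)\cong\Sigma^2\mathcal{T}$ realized as the C*-subalgebra of $\mathcal{T}\otimes\mathcal{T}$ generated by $s\otimes 1$ and $(1-ss^*)\otimes s$, with $\iota$ the inclusion into $\mathcal{T}\otimes\mathcal{T}$. The plan is to compute both $K$-groups and check that $\iota_*$ is the identity (or at least an isomorphism) under these identifications. The two basic facts I would lean on are that $\mathcal{T}$ is $KK$-equivalent to $\C$ (equivalently $K_0(\mathcal{T})=\Z$ generated by $[1]$, $K_1(\mathcal{T})=0$, with $[1-ss^*]=0$ in $K_0$ since $1-ss^*$ is a rank-one projection whose class equals the index of the Fredholm operator $s^*$), and the behaviour of $K$-theory under the quantum double suspension.

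\textbf{First approach: via the double-suspension six-term sequence.}
The quantum double suspension $\Sigma^2 A$ sits in a short exact sequence
\[
0\longrightarrow \mathcal{K}\otimes A\longrightarrow \Sigma^2 A\longrightarrow A\longrightarrow 0,
\]
and the induced boundary map realizes the Bott-type isomorphism $K_*(A)\xrightarrow{\cong}K_{*+1}(\Sigma^2 A)$ up to degree shift; in particular $K_0(C(B^4_q))=K_0(\Sigma^2\mathcal{T})\cong\Z$ and $K_1(C(B^4_q))=0$. On the other side, by the Künneth theorem (already invoked in the text) together with $K_0(\mathcal{T})=\Z$, $K_1(\mathcal{T})=0$, one gets $K_0(\mathcal{T}\otimes\mathcal{T})=\Z$ and $K_1(\mathcal{T}\otimes\mathcal{T})=0$, with the $K_0$-generator being $[1\otimes 1]$. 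Thus both sides have $K_0=\Z$, $K_1=0$, and it remains only to check that $\iota_*$ carries a generator to a generator. Since $\iota$ is unital, $\iota_*[1_{C(B^4_q)}]=[1\otimes 1]$, which generates $K_0(\mathcal{T}\otimes\mathcal{T})$; hence $\iota_*$ is surjective on $K_0$, and being a surjection $\Z\to\Z$ it is an isomorphism, while on $K_1$ it is the zero map between trivial groups.

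\textbf{The step I expect to be delicate.}
The genuinely substantive point is identifying the $K_0$-generator of $C(B^4_q)$ and verifying that it is $[1]$, i.e.\ that the unit of $\Sigma^2\mathcal{T}$ generates $K_0$ rather than lies in a proper subgroup. Equivalently, one must rule out the a priori possibility that $\iota_*$ is multiplication by some $m\neq\pm1$. I would settle this by computing directly inside $\mathcal{T}\otimes\mathcal{T}$: the inclusion $\mathcal{K}\otimes\mathcal{T}\hookrightarrow C(B^4_q)$ (the kernel of $\Sigma^2\mathcal{T}\to\mathcal{T}$) induces $\iota$-compatible maps on the six-term sequences, and chasing the naturality diagram relating the exact sequence of $\Sigma^2\mathcal{T}$ to that of $\mathcal{T}\otimes\mathcal{T}$ pins down $\iota_*$ as an isomorphism. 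Concretely, because $[1-ss^*]=0$ in $K_0(\mathcal{T})$, the class $[1\otimes 1]=[ss^*\otimes 1]+[(1-ss^*)\otimes 1]$ already lives in the image of $C(B^4_q)$ and equals $\iota_*[1]$, so the generator matches. This naturality/generator-matching is the only place where care is required; everything else is a formal consequence of the Künneth theorem and $KK$-triviality of $\mathcal{T}$.
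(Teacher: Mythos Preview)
Your core argument is the same as the paper's: both $K_0$-groups are $\Z$ with $K_1=0$, the map $\iota$ is unital, so $\iota_*[1]=[1\otimes 1]$ is a generator of the target, and a surjection $\Z\to\Z$ is an isomorphism. The paper simply cites Hong--Szyma\'nski for $K_0(C(B^4_q))\cong\Z$ generated by $[1]$ and $K_1(C(B^4_q))=0$, uses K\"unneth for $\mathcal{T}\otimes\mathcal{T}$, and invokes unitality of $\iota$.

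Two remarks on your write-up. First, your short exact sequence for the quantum double suspension is not correct: for $\Sigma^2A\subset\mathcal{T}\otimes A$ as described in the paper, the ideal is indeed $\mathcal{K}\otimes A$, but the quotient is $C(S^1)$ (the image of $\Sigma^2A$ under $\sigma\otimes\id$ is generated by $u\otimes 1$), not $A$. Consequently the claimed ``Bott-type isomorphism $K_*(A)\cong K_{*+1}(\Sigma^2A)$'' is misstated; in fact $K_*(\Sigma^2A)\cong K_*(A)$ with no degree shift, which is what you then use. This does not affect your conclusion, since the values $K_0=\Z$, $K_1=0$ are right, but the derivation needs repair (or, as the paper does, a citation). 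Second, your ``delicate step'' paragraph is unnecessary: you do not need to know that $[1]$ generates $K_0(C(B^4_q))$, only that $\iota_*[1]$ generates the target---which you already established---since any surjective endomorphism of $\Z$ is an isomorphism. So the extra naturality chase can be dropped.
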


\begin{proof}
It was shown in \cite[Example 6.5]{hong02} that  $K_0(C(B^4_q))\cong\Z$ is generated by $[1]$, and $K_1(C(B^4_q))=0$.
Similarly, by the K{\"u}nneth theorem, $K_0(\mathcal{T}\otimes\mathcal{T})\cong\Z$ is generated by $[1]$, and $K_1(\mathcal{T}\otimes\mathcal{T})=0$.
Finally, since $\iota$ is a unital map, we conclude that $\iota_*$ is an isomorphism between the K-groups.
\end{proof}

\begin{lemma}\label{lemma:nu43}
The map $\nu:C(SU_q(2))\to C(S^3_H)$ in \eqref{eq:nu226} induces an isomorphism in K-theory.
\end{lemma}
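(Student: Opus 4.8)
The plan is to recognize $\nu$ as the map induced on pullbacks by a morphism of pullback squares, and then to invoke Theorem~\ref{thm:3outof4}. Diagram \eqref{ProjectiveMapsOmegaSmall} exhibits $\nu$ precisely as the dashed map $\phi$ of the abstract situation \eqref{twopullcommthm}: its source pullback is the presentation \eqref{eq:pullsuq2} of $C(SU_q(2))$ and its target pullback is the presentation \eqref{eq:S3qS3H} of $C(S^3_H)$, while the explicit formula \eqref{eq:nu226} records the resulting map. Reading off the vertical legs of \eqref{ProjectiveMapsOmegaSmall}, the component maps are $\phi_1=\id\otimes 1_{\mathcal{T}}\colon C(S^1)\to C(S^1)^\bullet\otimes\mathcal{T}^\bullet$, $\phi_2=\id$ and $\phi_{12}=\id$. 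The two right-hand legs $\pi_2=\sigma\otimes\id$ and $\rho_2=\sigma\otimes\id$ are surjective, since the symbol map $\sigma\colon\mathcal{T}\to C(S^1)$ is surjective, so the hypotheses of Theorem~\ref{thm:3outof4} are in force once $\phi_1$, $\phi_2$ and $\phi_{12}$ are shown to be K-theory isomorphisms.

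Two of these are immediate: $\phi_2$ and $\phi_{12}$ are identity maps, hence induce isomorphisms in both degrees. The only substantive point is that $\phi_1=\id\otimes 1_{\mathcal{T}}$ is a K-theory isomorphism. For this I would first recall that the unital inclusion $1_{\mathcal{T}}\colon\C\to\mathcal{T}$ induces an isomorphism in K-theory: indeed $K_0(\mathcal{T})\cong\Z$ is generated by $[1]$ and $K_1(\mathcal{T})=0$, so $(1_{\mathcal{T}})_*$ carries the generator $[1]$ of $K_0(\C)$ to the generator $[1]$ of $K_0(\mathcal{T})$ and is trivially an isomorphism in odd degree. Since all the K-groups involved are free, hence flat, over $\Z$, the torsion term in the Künneth theorem vanishes and $(\id\otimes 1_{\mathcal{T}})_*$ is identified with $\id_*\otimes(1_{\mathcal{T}})_*$, which is therefore an isomorphism.

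With the three component maps established as K-theory isomorphisms, Theorem~\ref{thm:3outof4} yields at once that $\nu_*=\phi_*\colon K_*(C(SU_q(2)))\to K_*(C(S^3_H))$ is an isomorphism. The genuinely substantive ingredient is the K-theory computation for the inclusion $\C\to\mathcal{T}$ together with the Künneth identification; everything else is a matter of matching \eqref{ProjectiveMapsOmegaSmall} with the template \eqref{twopullcommthm}, reading off that the surjectivity hypotheses hold, and noting that the remaining two vertical maps are identities. I therefore expect the only real obstacle to be bookkeeping rather than mathematical: making sure the decorated factors $C(S^1)^\bullet\otimes\mathcal{T}^\bullet$ are treated, for K-theoretic purposes, simply as the underlying C*-algebra $C(S^1)\otimes\mathcal{T}$, with the $U(1)$-decorations playing no role.
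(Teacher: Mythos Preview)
Your proposal is correct and follows essentially the same approach as the paper: apply Theorem~\ref{thm:3outof4} to the diagram \eqref{ProjectiveMapsOmegaSmall}, noting that two of the three horizontal maps are identities and that $\id\otimes 1_{\mathcal{T}}$ is a K-theory isomorphism by the K{\"u}nneth theorem. Your write-up is in fact slightly more detailed than the paper's, spelling out the surjectivity of the right legs and the K-theory of $\mathcal{T}$ explicitly.
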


\begin{proof}
We want to apply Theorem~\ref{thm:3outof4} to the diagram \eqref{ProjectiveMapsOmegaSmall}. To this end, we need to verify that the three solid horizontal arrows in the diagram \eqref{ProjectiveMapsOmegaSmall} induce isomorphisms in K-theory.
Clearly, the two identity maps induce isomorphisms in K-theory, and
$\id\otimes 1_{\mathcal{T}}$ induces an isomorphism in K-theory by the K{\"u}nneth exact sequence applied to the tensor product $C(S^1)\otimes\mathcal{T}$.
\end{proof}

\begin{lemma}
The map $(\nu\otimes 1_{\mathcal{T}})^{U(1)}:C(S^2_q)\to \big(C(S^3_H)^\bullet\otimes\mathcal{T}^\bullet\big)^{U(1)}$ induces an isomorphism in K-theory.
\end{lemma}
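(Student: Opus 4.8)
The plan is to mimic the proof of Lemma~\ref{lemma:nu43}: I will realize both the source $C(S^2_q)=C(SU_q(2))^{U(1)}$ and the target $P_1=\big(C(S^3_H)^\bullet\otimes\mathcal{T}^\bullet\big)^{U(1)}$ as pullback C*-algebras, exhibit $(\nu\otimes 1_{\mathcal{T}})^{U(1)}$ as the map induced on pullbacks by a morphism between these two pullback diagrams, and then invoke Theorem~\ref{thm:3outof4}. Since the $U(1)$-fixed-point functor preserves pullbacks, applying it to the equivariant diagram \eqref{eq:pullsuq2} yields a pullback presentation of $C(S^2_q)$. Here the bottom-left node $C(S^1)$ carries the rotation action (its generator $u$ is the image $p_L(\alpha^*)$, of degree $+1$), so its fixed-point algebra is $\C$; the node $\mathcal{T}\otimes C(S^1)^\bullet$ carries the action only on the rightmost tensorand, so its fixed-point algebra is $\mathcal{T}$; and the base $C(S^1)\otimes C(S^1)^\bullet$ has fixed-point algebra $C(S^1)$. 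Thus $C(S^2_q)\cong\C\times_{C(S^1)}\mathcal{T}$, with the right leg given by the symbol map $\sigma$ and the left leg by the unital inclusion.

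For the target, I will tensor the pullback \eqref{eq:S3qS3H} for $C(S^3_H)$ with $\mathcal{T}^\bullet$ (carrying the diagonal action), which again preserves the pullback, and then take $U(1)$-fixed points. The key computational input is the gauge-trick isomorphism $\kappa$ of \eqref{kappa}, which gives $\big(C(S^1)^\bullet\otimes\mathcal{T}^\bullet\big)^{U(1)}\cong\mathcal{T}$, together with the elementary fact that when the $U(1)$-action is trivial on $A$ one has $(A\otimes B)^{U(1)}=A\otimes B^{U(1)}$. These identifications collapse both upper nodes to $\mathcal{T}\otimes\mathcal{T}$ and the base to $C(S^1)\otimes\mathcal{T}$, whence $P_1\cong(\mathcal{T}\otimes\mathcal{T})\times_{C(S^1)\otimes\mathcal{T}}(\mathcal{T}\otimes\mathcal{T})$, with right leg $\sigma\otimes\id$. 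As a sanity check, the Mayer--Vietoris sequence of this pullback reproduces $K_0(P_1)\cong\Z^2$ and $K_1(P_1)=0$, consistent with \eqref{BasicSplitEq}.

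Next I will produce the comparing morphism as the composite of two morphisms of pullback diagrams: first \eqref{ProjectiveMapsOmegaSmall}, which realizes $\nu$ and whose corner maps are $\id\otimes 1_{\mathcal{T}}$, $\id$ and $\id$; and then the morphism $-\otimes 1_{\mathcal{T}}$ from the $C(S^3_H)$-diagram to its tensor product with $\mathcal{T}$. The induced map on total pullbacks is exactly $\nu\otimes 1_{\mathcal{T}}$, so applying the fixed-point functor yields a morphism of the two pullback diagrams above inducing $(\nu\otimes 1_{\mathcal{T}})^{U(1)}$. Tracking the three corner maps through the fixed-point identifications, they become the unital inclusions $\C\to\mathcal{T}\otimes\mathcal{T}$, $\mathcal{T}\to\mathcal{T}\otimes\mathcal{T}$ and $C(S^1)\to C(S^1)\otimes\mathcal{T}$, $v\mapsto v\otimes 1$. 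Each is a $K$-theory isomorphism: the first two because the maps are unital and all four algebras have $K_0\cong\Z$ and $K_1=0$; the third by the K\"unneth theorem together with the fact that $1_{\mathcal{T}}\colon\C\to\mathcal{T}$ is a $K$-equivalence. Since the right-hand legs $\sigma$ and $\sigma\otimes\id$ are surjective, Theorem~\ref{thm:3outof4} applies and shows that $(\nu\otimes 1_{\mathcal{T}})^{U(1)}$ induces an isomorphism in $K$-theory.

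The main obstacle is not $K$-theoretic but is rather the bookkeeping: carefully pinning down the $U(1)$-actions on each node of the two equivariant pullback diagrams and verifying, via the gauge trick, that the fixed-point algebras are as claimed, and---most importantly---checking that the morphism assembled above genuinely induces $(\nu\otimes 1_{\mathcal{T}})^{U(1)}$ on the pullbacks rather than some neighbouring map. Once these identifications are in place, the conclusion is immediate from Theorem~\ref{thm:3outof4}, exactly as in Lemma~\ref{lemma:nu43}.
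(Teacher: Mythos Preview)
Your proposal is correct and follows essentially the same approach as the paper: tensor the $C(S^3_H)$-side of diagram \eqref{ProjectiveMapsOmegaSmall} with $\mathcal{T}^\bullet$, apply the gauge trick to reduce the diagonal actions, pass to $U(1)$-fixed points, and then invoke Theorem~\ref{thm:3outof4} once the three corner maps are seen to be $1_{\mathcal{T}\otimes\mathcal{T}}:\C\to\mathcal{T}\otimes\mathcal{T}$, $\id\otimes 1_{\mathcal{T}}:\mathcal{T}\to\mathcal{T}\otimes\mathcal{T}$, and $\id\otimes 1_{\mathcal{T}}:C(S^1)\to C(S^1)\otimes\mathcal{T}$. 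The paper carries out exactly this plan, writing the gauge maps explicitly rather than invoking $\kappa$ abstractly; your bookkeeping concerns are the right ones, and the paper resolves them in the same way you outline.
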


\begin{proof}
We tensor the right pullback diagram in \eqref{ProjectiveMapsOmegaSmall} with $\mathcal{T}$ to obtain the commutative diagram of $U(1)$-C*-algebras:
\begin{equation}\label{eq:413}
\begin{tikzpicture}[scale=2]

\node (A) at (1,2) {$C(SU_q(2))$};
\node (B) at (0,1) {$C(S^1)$};
\node (C) at (2,1) {$\mathcal{T}\otimes C(S^1)^\bullet$};
\node (D) at (1,0) {$C(S^1)\otimes C(S^1)^\bullet$};
\node (E) at (5,2) {$C(S^3_H)^\bullet\otimes\mathcal{T}^\bullet$};
\node (F) at (4,1) {$C(S^1)^\bullet\otimes\mathcal{T}^\bullet\otimes\mathcal{T}^\bullet$};
\node (G) at (6,1) {$\mathcal{T}\otimes C(S^1)^\bullet\otimes\mathcal{T}^\bullet$};
\node (H) at (5,0) {$C(S^1)\otimes C(S^1)^\bullet\otimes\mathcal{T}^\bullet$};

\path[->,font=\scriptsize,inner sep=5pt]
	(A) edge (B)
	(A) edge (C)
	(B) edge (D)
	(C) edge (D)
	(E) edge (F)
	(E) edge (G)
	(F) edge (H)
	(G) edge (H)
	(A) edge[bend left=10,dashed] node[above] {$\nu\otimes 1_{\mathcal{T}}$} (E)
	(B) edge[bend right=15] node[below,pos=0.7] {$\id\otimes 1_{\mathcal{T}\otimes\mathcal{T}}$} (F)
	(C) edge[bend left=15] node[above,pos=0.4] {$\id\otimes 1_{\mathcal{T}}$} (G)
	(D) edge[bend right=10] node[below] {$\id\otimes 1_{\mathcal{T}}$} (H);
			
\end{tikzpicture}
\end{equation}
Herein, the dots indicate the factors in tensor products that contribute to the diagonal $U(1)$-action.
Note that the maps on the pullback squares are omitted as we are not going to need their explicit form. Next, consider the following gauge maps:
\begin{equation}
\setlength{\arraycolsep}{1pt}
\begin{array}{rlrl}
C(S^1)^\bullet\otimes\mathcal{T}^\bullet &\longrightarrow
C(S^1)^\bullet\otimes\mathcal{T} , &
v\otimes t &\longmapsto vt_{(-1)}\otimes t_{(0)} ,
\\[5pt]
C(S^1)^\bullet\otimes\mathcal{T}^\bullet\otimes\mathcal{T}^\bullet &\longrightarrow
C(S^1)^\bullet\otimes\mathcal{T}\otimes\mathcal{T} , \quad &
v\otimes t\otimes t' &\longmapsto vt_{(-1)}t'_{(-1)}\otimes t_{(0)}\otimes t'_{(0)} .
\end{array}
\end{equation}
We apply the first one to the bottom and right nodes of the right square in \eqref{eq:413}, and the second one to the left node of the same square. Since the first gauge map is the identity on the image of $\id\otimes 1_{\mathcal{T}}$, and the second gauge map is the identity on the image of $\id\otimes 1_{\mathcal{T}}\otimes 1_{\mathcal{T}}$, the diagram \eqref{eq:413} becomes
\begin{equation}
\begin{tikzpicture}[scale=2,baseline={([yshift=-2pt]current bounding box.center)}]

\node (A) at (1,2) {$C(SU_q(2))$};
\node (B) at (0,1) {$C(S^1)$};
\node (C) at (2,1) {$\mathcal{T}\otimes C(S^1)^\bullet$};
\node (D) at (1,0) {$C(S^1)\otimes C(S^1)^\bullet$};
\node (E) at (5,2) {$C(S^3_H)^\bullet\otimes\mathcal{T}^\bullet$};
\node (F) at (4,1) {$C(S^1)^\bullet\otimes\mathcal{T}\otimes\mathcal{T}$};
\node (G) at (6,1) {$\mathcal{T}\otimes C(S^1)^\bullet\otimes\mathcal{T}$};
\node (H) at (5,0) {$C(S^1)\otimes C(S^1)^\bullet\otimes\mathcal{T}$};

\path[->,font=\scriptsize,inner sep=5pt]
	(A) edge (B)
	(A) edge (C)
	(B) edge (D)
	(C) edge (D)
	(E) edge (F)
	(E) edge (G)
	(F) edge (H)
	(G) edge (H)
	(A) edge[bend left=10,dashed] node[above] {$\nu\otimes 1_{\mathcal{T}}$} (E)
	(B) edge[bend right=15] node[below,pos=0.7] {$\id\otimes 1_{\mathcal{T}\otimes\mathcal{T}}$} (F)
	(C) edge[bend left=15] node[above,pos=0.4] {$\id\otimes 1_{\mathcal{T}}$} (G)
	(D) edge[bend right=10] node[below] {$\id\otimes 1_{\mathcal{T}}$} (H);
			
\end{tikzpicture} .
\end{equation}
Note that the horizontal maps remains unchanged. Passing to the fixed-point subalgebras, we obtain the commutative diagram
\begin{equation}
\begin{tikzpicture}[scale=1.8,baseline={([yshift=-5pt]current bounding box.center)}]

\node (A) at (1,2) {$C(S^2_q)$};
\node (B) at (0,1) {$\C$};
\node (C) at (2,1) {$\mathcal{T}$};
\node (D) at (1,0) {$C(S^1)$};
\node (E) at (5.5,2) {$\big(C(S^3_H)^\bullet\otimes\mathcal{T}^\bullet\big)^{U(1)}$};
\node (F) at (4.5,1) {$\mathcal{T}\otimes\mathcal{T}$};
\node (G) at (6.5,1) {$\mathcal{T}\otimes\mathcal{T}$};
\node (H) at (5.5,0) {$C(S^1)\otimes\mathcal{T}$};

\path[->,font=\scriptsize,inner sep=5pt]
	(A) edge (B)
	(A) edge (C)
	(B) edge (D)
	(C) edge (D)
	(E) edge (F)
	(E) edge (G)
	(F) edge (H)
	(G) edge (H)
	(A) edge[bend left=10,dashed] node[above] {$(\nu\otimes 1_{\mathcal{T}})^{U(1)}$} (E)
	(B) edge[bend right=15] node[below,pos=0.7] {$ 1_{\mathcal{T}\otimes \mathcal{T}}$} (F)
	(C) edge[bend left=15] node[above,pos=0.4] {$\id\otimes 1_{\mathcal{T}}$} (G)
	(D) edge[bend right=10] node[below] {$\id\otimes 1_{\mathcal{T}}$} (H);
			
\end{tikzpicture} .
\end{equation}
Arguing as in the previous lemmas, we conclude that all three solid
horizontal arrows induce isomorphisms in K-theory.
Therefore, we infer from Theorem~\ref{thm:3outof4} that 
\mbox{$(\nu\otimes 1_{\mathcal{T}})^{U(1)}$} also induces an isomorphism in K-theory.
\end{proof}

Finally, we combine the above three lemmas and apply Theorem~\ref{thm:3outof4} to the diagram \eqref{ProjectiveMaps}
to obtain:

\begin{thm}
\label{KIsomMainThm}
The map $(f^{U(1)})_{*}: K_{*}(C(\C P^2_q))\to K_{*}(C(\C P^2_H))$ is an isomorphism.
\end{thm}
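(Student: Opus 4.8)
The plan is to read \eqref{ProjectiveMaps} as a single morphism between two pullback squares fitting the template \eqref{twopullcommthm}, and then to let Theorem~\ref{thm:3outof4} do the work. Matching notation, I would set $A=C(\C P^2_q)$, $A_1=C(S^2_q)$, $A_2=C(B^4_q)$, $A_{12}=C(SU_q(2))$ for the source square \eqref{eq:CP2qpullback}, and $B=C(\C P^2_H)$, $B_1=\big(C(S^3_H)^\bullet\otimes\mathcal{T}^\bullet\big)^{U(1)}$, $B_2=\mathcal{T}\otimes\mathcal{T}$, $B_{12}=C(S^3_H)$ for the target square \eqref{eq:CP2Hpullback}. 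The vertical comparison maps are then $\phi=f^{U(1)}$, $\phi_1=(\nu\otimes 1_{\mathcal{T}})^{U(1)}$, $\phi_2=\iota$ and $\phi_{12}=\nu$. The first check is that both squares are genuine pullbacks whose bottom-right legs are surjective: in the source, $\pi_2=\varpi\colon C(B^4_q)\to C(SU_q(2))$ is surjective, and in the target, $\rho_2=\omega\colon \mathcal{T}\otimes\mathcal{T}\to C(S^3_H)$ is surjective (as recorded when \eqref{eq:B4q} was shown to be a pullback). Thus the surjectivity hypotheses of Theorem~\ref{thm:3outof4} on $\pi_2$ and $\rho_2$ are in force.

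Next I would simply assemble the three lemmas preceding this theorem, since they are tailored to verify exactly the remaining hypotheses of Theorem~\ref{thm:3outof4}: they establish that $\phi_{2*}=\iota_*$, that $\phi_{12*}=\nu_*$ (Lemma~\ref{lemma:nu43}), and that $\phi_{1*}=\big((\nu\otimes 1_{\mathcal{T}})^{U(1)}\big)_*$ are all isomorphisms of $K$-groups in both degrees. With the three side maps inducing $K$-theory isomorphisms and the two surjectivity conditions met, Theorem~\ref{thm:3outof4} applies verbatim and yields that $\phi_*=(f^{U(1)})_*\colon K_*(C(\C P^2_q))\to K_*(C(\C P^2_H))$ is an isomorphism, which is the assertion of Theorem~\ref{KIsomMainThm}.

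I do not expect any genuine obstacle here, because the substantive content has already been discharged upstream: the naturality of the six-term Mayer--Vietoris sequences supplied by Theorem~\ref{thm:transfer} guarantees the commuting ladder to which the five-lemma is applied, and the three lemmas reduce the comparison of building blocks to K\"unneth computations and to further instances of the same three-out-of-four principle. The one point I would be careful about is purely structural, namely confirming that the commuting cube \eqref{ProjectiveMaps} sends the surjective leg $\varpi$ of the source square to the surjective leg $\omega$ of the target square, so that the \emph{same} connecting homomorphisms $\partial_{01}$ and $\partial_{10}$ label both rows of the induced ladder and the five-lemma can be invoked exactly as in the proof of Theorem~\ref{thm:3outof4}.
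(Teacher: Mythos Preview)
Your proposal is correct and matches the paper's own argument exactly: the paper simply states that one combines the three preceding lemmas and applies Theorem~\ref{thm:3outof4} to the diagram~\eqref{ProjectiveMaps}, which is precisely what you have spelled out. Your additional care in identifying the surjective legs $\varpi$ and $\omega$ so that the hypotheses of Theorem~\ref{thm:3outof4} are met is a welcome elaboration of what the paper leaves implicit.
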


\begin{cor}
\label{MainCorSuff}
The element $[L_1\oplus L_{-1}]-2[1]$ generates $\partial_{10}(K_1(C(S^3_H)))$.
\end{cor}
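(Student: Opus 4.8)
The plan is to transport the generator found in Lemma~\ref{LemGenTilda} across the isomorphism of Theorem~\ref{KIsomMainThm}. I would begin by regarding the diagram~\eqref{ProjectiveMaps} as a morphism between the two pullback squares \eqref{eq:CP2qpullback} and \eqref{eq:CP2Hpullback}, whose right-hand legs $\varpi$ and $\omega$ are both surjective. Applying Theorem~\ref{thm:transfer} to this morphism produces a commuting ladder of the two associated six-term exact sequences; reading off the square relating the two odd-to-even connecting homomorphisms yields the intertwining identity
$$(f^{U(1)})_*\circ\partial_{10}=\partial_{10}\circ\nu_*,$$
in which the left-hand $\partial_{10}$ is the Milnor connecting homomorphism of \eqref{eq:CP2qpullback}, the right-hand one is that of \eqref{eq:CP2Hpullback}, and $\nu_*$ is the map induced on $K_1$ by the bottom-corner arrow $\nu\colon C(SU_q(2))\to C(S^3_H)$.

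Next, I would invoke Lemma~\ref{lemma:nu43}, which gives that $\nu_*$ is an isomorphism on $K_1$, together with Theorem~\ref{KIsomMainThm}, which gives that $(f^{U(1)})_*$ is an isomorphism. Feeding these into the intertwining identity produces
$$\partial_{10}\big(K_1(C(S^3_H))\big)=\partial_{10}\big(\nu_*(K_1(C(SU_q(2))))\big)=(f^{U(1)})_*\big(\partial_{10}(K_1(C(SU_q(2))))\big),$$
so that $(f^{U(1)})_*$ restricts to an isomorphism from $\partial_{10}(K_1(C(SU_q(2))))$ onto $\partial_{10}(K_1(C(S^3_H)))$.

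Finally, by Lemma~\ref{LemGenTilda} the class $[\tilde{L}_1\oplus\tilde{L}_{-1}]-2[1]$ generates $\partial_{10}(K_1(C(SU_q(2))))$, and since $(f^{U(1)})_*$ sends $[\tilde{L}_n]$ to $[L_n]$ for every $n\in\Z$ (as recorded above using \cite[Theorem~5.1]{hnpsz}), it carries this generator to $[L_1\oplus L_{-1}]-2[1]$. The latter therefore generates the image $\partial_{10}(K_1(C(S^3_H)))$. The whole argument is essentially formal once the first step is in place, so the only point requiring real care is verifying that \eqref{ProjectiveMaps} genuinely satisfies the hypotheses of Theorem~\ref{thm:transfer}, namely the commutativity of every face and the surjectivity of $\varpi$ and $\omega$, and then correctly matching the two occurrences of $\partial_{10}$ across the ladder; the concrete shape of the resulting generator comes for free from the identification $(f^{U(1)})_*[\tilde{L}_n]=[L_n]$.
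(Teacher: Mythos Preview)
Your proposal is correct and follows essentially the same route as the paper: both arguments apply Theorem~\ref{thm:transfer} to the morphism \eqref{ProjectiveMaps} to obtain the intertwining $(f^{U(1)})_*\circ\partial_{10}=\partial_{10}\circ\nu_*$, invoke Lemma~\ref{lemma:nu43} for the bijectivity of $\nu_*$, and use \cite[Theorem~5.1]{hnpsz} to identify $(f^{U(1)})_*[\tilde L_n]=[L_n]$. The only cosmetic difference is that the paper tracks the single element $[U]$ through the intertwining (so that $\nu_*([U])$ is already a generator of $K_1(C(S^3_H))$ and its image under $\partial_{10}$ is the desired generator), thereby avoiding any appeal to Theorem~\ref{KIsomMainThm}; your version instead passes through the global isomorphism $(f^{U(1)})_*$, which is a harmless but slightly stronger input.
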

\begin{proof}
Note first that \cite[Theorem~5.1]{hnpsz}
implies that 
$(f^{U(1)})_*$ maps $[\tilde{L}_n]$ to $[L_n]$ for any $n\in\mathbb{Z}$. If we apply this map to both sides of \eqref{theform}, we get
\begin{equation}
(f^{U(1)})_*(\partial_{10}([U]))=[L_1\oplus L_{-1}]-2[1].
\end{equation}\label{417}
Using Theorem~\ref{thm:transfer}, we rewrite the left hand side as
\begin{equation}\label{418} 
(\partial_{10}\circ \nu_*)([U])=[L_1\oplus L_{-1}]-2[1],
\end{equation}
which proves that $[L_1\oplus L_{-1}]-2[1]$ belongs to $\partial_{10}(K_1(C(S^3_H)))$.
Finally, since $\nu_*$ is an isomorphism, $\nu_*([U])$ is a generator of $K_1(C(S^3_H))$, so the right hand side of 
\eqref{418} is a generator of $\partial_{10}(K_1(C(S^3_H)))$.
\end{proof}

\section{Milnor idempotents and elementary projections in $\mathcal{T}$}
\label{sec-computation-generators-exp}
\noindent
The goal of this section is to express the third generators of $K_0(C(\C P^2_q))$ and $K_0(C(\C P^2_H))$
(see Lemma~\ref{LemGenTilda}
and Corollary~\ref{MainCorSuff}, respectively) as $K_0$-classes of projections
inside the C*-algebras $C(\C P^2_q)$ and $C(\C P^2_H)$, respectively.
Note that this means that these generators are in the positive cones of their $K_0$-groups,
\begin{alignat*}{2}
[\tilde L_1\oplus \tilde L_{-1}]-2[1] &=[\hspace{1pt}\widetilde{\mathrm{p}}\hspace{1pt}] , \hspace{1.5cm} &&
\widetilde{\mathrm{p}}\in C(\C P^2_q),
\\{}
[L_1\oplus L_{-1}]-2[1] &=[\hspace{1pt}\mathrm{p}\hspace{1pt}] , && \mathrm{p}\in C(\C P^2_H).
\end{alignat*}
Note also that this is impossible in the classical setting because there are no non-trivial vector bundles of rank $0$.
Here, in the noncommutative setting, because both Peter--Weyl comodule algebras $\mathcal{P}_{U(1)}(C(S^5_q))$ and $\mathcal{P}_{U(1)}(C(S^5_H))$ have characters, the rank of any associated projective module equals the dimension of its defining corepresentation by Proposition~\ref{prop:21}.
Therefore, the only associated projective module of rank $0$ is the zero module, so the aforementioned generators cannot be given as classes of associated projective modules.
Better still, using the index pairing, we will also show that the projective modules given by $1-\widetilde{\mathrm{p}}$ and $1-\mathrm{p}$ cannot be modules associated to $\mathcal{P}_{U(1)}(C(S^5_q))$ and $\mathcal{P}_{U(1)}(C(S^5_H))$, respectively.

\subsection{The Vaksman--Soibelman case}

It was proved in \cite[Theorem 2.4]{masuda1990noncommutative} that
the group $K_1(C(SU_q(2)))\cong\Z$ is generated by the class of the unitary
\begin{equation}\label{eq:w1}
\widetilde{w}:=1+(\gamma-1)\big(1-\alpha^*(\alpha\alpha^*)^{-1}\alpha \big) .
\end{equation}
Computing the index pairing of $[\widetilde{w}]$ with the same Fredholm module that was used in \cite{dhhmw12} to compute the index pairing with the 
class of the fundamental representation $U$ of $SU_q(2)$,
one can show that $[U]=[\widetilde{w}]$.
Here we give an alternative proof of this equality by constructing an explicit homotopy between the two unitaries.

\begin{lemma}\label{prop:51}
$[U]=[\widetilde{w}]\in K_1(C(SU_q(2)))$.
\end{lemma}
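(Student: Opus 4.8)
The plan is to produce an explicit norm-continuous path of invertibles in $M_2(C(SU_q(2)))$ joining the fundamental unitary $U$ to $\operatorname{diag}(\widetilde{w},1)$, and then to conclude by homotopy invariance of $K_1$ together with the stability relation $[\widetilde{w}]=[\operatorname{diag}(\widetilde{w},1)]$. First I would record a more tractable form of the target. Since $\alpha\alpha^*=1-q^2\gamma\gamma^*$ is bounded below by $1-q^2>0$, it is invertible, and $P:=\alpha^*(\alpha\alpha^*)^{-1}\alpha$ is a self-adjoint idempotent, so that $\widetilde{w}=1+(\gamma-1)(1-P)=P+(1-P)\gamma$. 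Writing $\alpha=(\alpha\alpha^*)^{1/2}v$ for the polar factor $v:=(\alpha\alpha^*)^{-1/2}\alpha$, one checks that $v$ is a co-isometry with $vv^*=1$ and $v^*v=P$; that $P$ commutes with $\gamma$, $\gamma^*$ and $\alpha\alpha^*$ (indeed $1-\gamma\gamma^*=\alpha^*\alpha$, so $P$ is a spectral projection of $\gamma\gamma^*$); that $(1-P)\gamma\gamma^*(1-P)=1-P$, i.e.\ $\gamma$ restricts to a unitary on the range of $1-P$; and that $v\gamma=q\gamma v$, $v\gamma^*=q\gamma^* v$. These identities are the only structural input needed.

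The homotopy itself I would build as $U(t)=L(t)\,U\,R(t)$ for explicit paths $L,R\colon[0,1]\to \mathrm{GL}_2(C(SU_q(2)))$ with $L(0)=R(0)=I$, arranged so that $L(1)\,U\,R(1)=\operatorname{diag}(\widetilde{w},1)$; then $t\mapsto U(t)$ is a path of invertibles from $U$ to $\operatorname{diag}(\widetilde{w},1)$ and the claim follows. The factors $L(1),R(1)$ would be assembled from transvections $\left(\begin{smallmatrix}1&\ast\\0&1\end{smallmatrix}\right)$ and $\left(\begin{smallmatrix}1&0\\ \ast&1\end{smallmatrix}\right)$ with entries built from $v,v^*,\gamma,\gamma^*$, each connected to $I$ by scaling its off-diagonal entry by $t$, together with a path of positive invertibles (such as $(\alpha\alpha^*)^{t/2}$) deforming the radial factor $(\alpha\alpha^*)^{1/2}$ to $1$. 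The mechanism of the reduction is that the co-isometry $v$ clears exactly the part $\gamma P=\gamma v^*v$ of the lower-left entry, while, because $v^*v=P\neq 1$, it necessarily leaves the residue $\gamma(1-P)=(1-P)\gamma$ untouched; this residue is precisely the nontrivial summand of $\widetilde{w}$, so that after clearing the upper-right entry and normalising the lower-right corner one lands on $\operatorname{diag}(\widetilde{w},1)$. Verifying the endpoint identity $L(1)\,U\,R(1)=\operatorname{diag}(\widetilde{w},1)$ is then a direct multiplication using the commutation relations above.

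The hard part is that no entry of $U$ is invertible, so there is no pivot and ordinary Gaussian elimination is unavailable. Worse, the naive deformation that would shrink $\gamma$ (equivalently push $\alpha\alpha^*=1-q^2\gamma\gamma^*$ to $1$) collapses $U$ to the element $\operatorname{diag}(v,v^*)$, which is \emph{not} invertible since $v$ is a co-isometry but not a unitary; hence that deformation is not a homotopy through invertibles at all. This failure is exactly what confirms that the winding carried by $\gamma$ on the range of $1-P$ must be preserved, and merely relocated, along any valid homotopy. The whole difficulty is therefore to transport this winding into the $\widetilde{w}$-corner using only the partial-isometry relations $vv^*=1$, $v^*v=P$ and the $q$-commutation $v\gamma=q\gamma v$, while keeping every intermediate matrix invertible; the main bookkeeping obstacle is the non-commutation of the radial factor $(\alpha\alpha^*)^{1/2}$ with $v$ (one has $v\,(\alpha\alpha^*)^{1/2}=(1-q^4\gamma\gamma^*)^{1/2}v$), which is what forces the radial part to be straightened by its own separate positive-invertible path rather than absorbed into the elementary factors.
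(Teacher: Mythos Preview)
Your structural observations are correct: the polar factor $v=(\alpha\alpha^*)^{-1/2}\alpha$ is a co-isometry with $vv^*=1$ and $v^*v=P$, the projection $P$ commutes with $\gamma$, and $\gamma$ restricts to a unitary on the range of $1-P$. The rewriting $\widetilde{w}=P+(1-P)\gamma$ is also right. The strategy of reducing $U$ to $\operatorname{diag}(\widetilde{w},1)$ by elementary operations is a reasonable one, and quite different from what the paper does.

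The paper takes a representation-theoretic route. It embeds $C(SU_q(2))$ faithfully into the bounded operators on $\ell^2(\N\times\Z)$ via the map $p_R$, and then writes down a \emph{continuous family} of operators $\alpha_t,\gamma_t$ for $t\in[0,q]$ which stay inside this image for all $t$; the matrix $U_t=\left(\begin{smallmatrix}\alpha_t & -t\gamma_t^*\\ \gamma_t & \alpha_t^*\end{smallmatrix}\right)$ is unitary throughout and specialises to the fundamental unitary at $t=q$. At $t=0$ the operators degenerate (morally ``$q=0$''): $\alpha_0$ becomes a partial isometry and $\gamma_0$ is supported on a single line, so $U_0$ is lower-triangular. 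A second explicit unitary path $V_t$ then carries $U_0$ to $\left(\begin{smallmatrix}0 & -1\\ \gamma_0+\alpha_0^*\alpha_0 & 0\end{smallmatrix}\right)$, and one identifies $\gamma_0+\alpha_0^*\alpha_0$ with $\widetilde{w}$. The point is that every intermediate matrix is manifestly unitary, so no invertibility checking is needed.

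The gap in your proposal is that the actual transvections $L(1),R(1)$ are never written down, and the claim that ``after clearing the upper-right entry and normalising the lower-right corner one lands on $\operatorname{diag}(\widetilde{w},1)$'' is exactly what has to be proved. Clearing $\gamma P$ from the $(2,1)$ slot via $v$ is fine, but you do not say how to remove $-q\gamma^*$ from the $(1,2)$ slot (neither diagonal entry is invertible, and the co-isometry trick runs the other way), nor why the $(2,2)$ entry ends up equal to $1$ rather than some other invertible. You also leave unclear how the radial deformation $(\alpha\alpha^*)^{t/2}$ is incorporated into the $L(t)\,U\,R(t)$ framework: as you note, simply shrinking the radial part pushes $U$ toward the \emph{non-invertible} $\operatorname{diag}(v,v^*)$, so the radial path cannot be decoupled from the transvections and must be interleaved with them in a way you have not specified. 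In short, the difficulty you correctly diagnose in your last paragraph is never actually overcome; what remains is precisely the content of the lemma. If you want to pursue this line, you need to exhibit the explicit sequence of elementary factors and verify the product identity, keeping track of the non-commutation $v(\alpha\alpha^*)^{1/2}\neq(\alpha\alpha^*)^{1/2}v$ at each step.
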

\begin{proof}
For $0\leq t<1$, consider the bounded operators $\alpha_t,\gamma_t$ on $\ell^2(\N\times\Z)$ defined by
\begin{align*}
\alpha_t\ket{n,k} &:=
\begin{cases}
\sqrt{1-t^{2n}}\ket{n-1,k-1} , & \text{if }t\neq 0 , \\
\ket{n-1,k-1} , & \text{if }t=0,n\neq 0, \\
0 , & \text{if }t=0,n=0 ,
\end{cases} \\
\gamma_t\ket{n,k} &:=
\begin{cases}
t^n\ket{n,k-1} , & \text{if }t\neq 0, \\
\delta_{n,0}\ket{0,k-1} , & \text{if }t=0 .
\end{cases}
\end{align*}
Here $\ket{n,k}$, with $n\in\N$ and $k\in\Z$, are the vectors of the standard orthonormal basis of $\ell^2(\N\times\Z)$.
Note that the maps $t\mapsto\alpha_t$ and $t\mapsto \gamma_t$ are norm-continuous.

We use the injective *-homorphism $p_R:C(SU_q(2))\to\mathcal{T}\otimes C(S^1)$, and the standard identification of $u^*$ with the negative shift on $\ell^2(\Z)$ in the Fourier basis, to identify $C(SU_q(2))$ with a concrete C*-algebra of bounded operators on $\ell^2(\N\times\Z)$.
Note that this representation is unitarily equivalent to the well-known faithful representation of $C(SU_q(2))$ given in 
\cite[Corollary 2.3]{masuda1990noncommutative}.
Note also that  $\alpha_t$ and $\gamma_t$ are in the image of this representation for all $0\leq t<1$.

Now, let
\[
U_t:=\begin{pmatrix} \alpha_t & -t\gamma_t^* \\ \gamma_t & \alpha_t^* \end{pmatrix}.
\]
The fundamental representation of $SU_q(2)$ is given by $U=U_q$. The map
\[
[0,q] \ni t\longmapsto U_t
\]
is a continuous path of unitary matrices with entries in $C(SU_q(2))$. 
In particular, $U_q$ is homotopic to the unitary
\[
U_0=\begin{pmatrix} \alpha_0 & 0 \\ \gamma_0 & \alpha_0^* \end{pmatrix} .
\]
Consider now the norm-continuous map
\[
[0,1]\ni t\longmapsto V_t:=\begin{pmatrix} \sqrt{1-t}\,\alpha_0 & 
-\sqrt{t} \\ \gamma_0+\sqrt{t}\,\alpha_0^*\alpha_0 & \sqrt{1-t}\,\alpha_0^* \end{pmatrix} .
\]
One easily checks that $V_t$ is unitary for all values of $t$. This gives a homotopy between $V_0=U_0$ and
\[
V_1=\begin{pmatrix} 0 & -1 \\ \gamma_0+\alpha_0^*\alpha_0 & 0 \end{pmatrix} .
\]
Finally, observe that, by \cite[Theorem 4.2.9]{wegge1993k}, the $K_1$-class of $V_1$ is the same as the $K_1$-class of the scalar unitary $\gamma_0+\alpha_0^*\alpha_0$. By an explicit calculation using the standard basis of $\ell^2(\N\times\Z)$, one checks that $\gamma_0+\alpha_0^*\alpha_0=\widetilde{w}$.
\end{proof}

The above lemma allows us to compute the $K_0$-class $[\tilde{L}_1\oplus \tilde{L}_{-1}]-2[1]$ as the Milnor connecting homomorphism
applied to [$\widetilde{w}$]. Remarkably, this calculation agrees already on the level of projections (before taking the $K_0$-classes) with
an alternative calculation afforded by computing projections for $\tilde{L}_1$ and $\tilde{L}_{-1}$ using a strong connection (see Section~2.1) on the
principal comodule algebra $\mathcal{P}_{U(1)}(C(S^5_q))$ much as it was done in \cite{hw10} in the simpler case of $\mathcal{P}_{U(1)}(C(SU_q(2)))$.
Herein, however, the strong-connection calculation, although elementary, is very lengthy, so the homotopy shortcut of Lemma~\ref{prop:51} that
allows us to use the Milnor connection homomorphism is truly beneficial.
To use this shorcut, we rewrite the unitary 
$\widetilde{w}=\gamma_0+\alpha_0^*\alpha_0$
 as
\begin{equation}\label{eq:w2}
\widetilde{w}=\big( 1, ss^* \otimes 1 + (1-ss^*) \otimes u^* \big)
\end{equation}
taking advantage of the identification of $C(SU_q(2))$ with the pullback C*-algebra of \eqref{eq:pullsuq2}.

\begin{lemma}\label{prop:52}
One has $\partial_{10}([\widetilde{w}])=[\hspace{1pt}\widetilde{\mathrm{p}}\hspace{1pt}]\in K_0(C(\C P^2_q))$, where
\begin{equation}\label{eq:wtp}
\widetilde{\mathrm{p}}:=\big(0,(1-ss^*)\otimes (1-ss^*)\big) .
\end{equation}
\end{lemma}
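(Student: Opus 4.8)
The goal is to compute $\partial_{10}([\widetilde{w}])$ explicitly, where $\partial_{10}$ is the Milnor connecting homomorphism associated to the pullback diagram~\eqref{eq:CP2qpullback}, and show it equals $[\widetilde{\mathrm p}]$ for the stated Toeplitz-type projection. The plan is to apply the Milnor formula~\eqref{MilnProjEq} directly to the representative $\widetilde w$ of~\eqref{eq:w2}, using the pullback presentation~\eqref{eq:CP2qpullback} with $B_2=C(B^4_q)$, $B_{12}=C(SU_q(2))$, and $\rho_2=\varpi$ the surjection. Since $\widetilde w$ is a $1\times 1$ unitary (so $n=1$), the Milnor projection $p_a$ in~\eqref{MilnProjEq} lives in $M_2(C(\C P^2_q))$ and the computation is genuinely hands-on rather than conceptual.

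First I would pin down the arithmetic of the formula. For a unitary $a$ we may take $d=a^{-1}=a^*$, whence $dc$ and $(1-dc)$ simplify drastically once $c$ is a lift. So the first concrete step is to choose a lift $c\in C(B^4_q)$ with $\varpi(c)=\widetilde w$ and a lift $d$ with $\varpi(d)=\widetilde w^{-1}=\widetilde w^*$. Recalling that $\widetilde w = ss^*\otimes 1 + (1-ss^*)\otimes u^*$ in the second leg of~\eqref{eq:pullsuq2}, and that $C(B^4_q)\subseteq\mathcal T\otimes\mathcal T$ is generated by $s\otimes 1$ and $(1-ss^*)\otimes s$ with $\varpi$ given by~\eqref{eq:varpi}, the natural candidate lift is $c=ss^*\otimes 1+(1-ss^*)\otimes s^*$ (and $d=ss^*\otimes 1+(1-ss^*)\otimes s$), after checking these genuinely lie in $C(B^4_q)$ and map correctly under $\varpi$. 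The key algebraic fact that makes everything collapse is that $(1-ss^*)\otimes s^*$ is a \emph{partial isometry}: its initial and final projections involve $(1-ss^*)\otimes (1-s^*s)$ and $(1-ss^*)\otimes(1-ss^*)$, so $1-dc$ becomes exactly the projection $(1-ss^*)\otimes(1-ss^*)$ up to the identifications in play.

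The central calculation is therefore to substitute $c,d$ into~\eqref{MilnProjEq} and simplify the four matrix entries. I expect $dc$ to equal $1$ minus a rank-defect projection supported on the compact corner, so that $(1-dc)^2=(1-dc)=(1-ss^*)\otimes(1-ss^*)$ in $C(B^4_q)$, the off-diagonal entries $c(2-dc)(1-dc)$ and $(1-dc)d$ to vanish (because $s^*(1-ss^*)=0$ kills them), and the top-left entry to collapse to $(1,\,\cdot\,)$ paired trivially. After assembling the pullback pair $p_a=(\text{first leg},\text{second leg})\in C(\C P^2_q)\subseteq C(S^2_q)\times_{C(SU_q(2))}C(B^4_q)$, I expect the nontrivial content to sit entirely in the $C(B^4_q)$-leg as the projection $(1-ss^*)\otimes(1-ss^*)$, while the $C(S^2_q)$-leg reduces to $0$, yielding exactly $\widetilde{\mathrm p}=\big(0,(1-ss^*)\otimes(1-ss^*)\big)$ and $[p_a]-[I_1]=[\widetilde{\mathrm p}]$.

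The main obstacle I anticipate is \emph{bookkeeping of the pullback legs}: one must verify that each entry of $p_a$, computed a priori in $M_2(C(B^4_q))$ via $\rho_2=\varpi$, actually descends to a matching pair in the pullback $C(\C P^2_q)$, i.e.\ that the $C(S^2_q)$-components are determined consistently and vanish. This requires tracking how the chosen lifts behave under the other leg $C(S^2_q)\hookrightarrow C(SU_q(2))$ of~\eqref{eq:CP2qpullback} and confirming that $\varpi$ applied to the surviving projection lands in the image of the inclusion, so that the pair is well defined. A secondary technical point is confirming that $c$ and $d$ genuinely belong to the subalgebra $C(B^4_q)$ (not merely to $\mathcal T\otimes\mathcal T$), which follows from the generator description but should be stated. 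Once the leg-matching is settled, the simplification is routine Toeplitz algebra, driven entirely by the isometry relation $s^*s=1$ and the annihilation $(1-ss^*)s=0$.
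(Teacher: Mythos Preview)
Your proposal is correct and follows essentially the same route as the paper: you use the pullback diagram~\eqref{eq:CP2qpullback}, choose the same lifts $c=ss^*\otimes 1+(1-ss^*)\otimes s^*$ and $d=c^*$ in $C(B^4_q)$, and simplify the Milnor idempotent~\eqref{MilnProjEq} to a diagonal matrix with bottom-right entry $\widetilde{\mathrm p}$. The paper's computation is organized around the observation $cd=1$ (which forces $dc$ to be an idempotent and kills the off-diagonals via $c(1-dc)=c-cdc=0$, $(1-dc)d=d-dcd=0$), whereas you plan to kill the off-diagonals directly via Toeplitz orthogonality relations like $(1-ss^*)s=0$ and $s^*(1-ss^*)=0$; these are equivalent, and your concern about leg-matching is unnecessary since the $B_1$-components in~\eqref{MilnProjEq} are already prescribed as $0$ or $1$.
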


\begin{proof}
We compute $\partial_{10}([\widetilde{w}])$ using the pullback 
diagram~\eqref{eq:CP2qpullback}. 
To this end, recall that $C(B^4_q)$ is the C*-subalgebra of $\mathcal{T}\otimes\mathcal{T}$ generated by 
$x:=s\otimes 1$ and $y:=(1-ss^*)\otimes s$.
Therefore, the elements
\[
c:=xx^*+y^*=ss^*\otimes 1+(1-ss^*)\otimes s^*
\qquad\text{and}\qquad d:=c^*
\]
belong to $C(B^4_q)$.
Now, using \eqref{eq:varpi} and \eqref{eq:w2}, one checks that
$\varpi(c)=\widetilde{w}$.
Furthermore, since
\begin{align*}
cd &=ss^*\otimes 1+(1-ss^*)\otimes 1=1\otimes 1 ,
\\
dc &=ss^*\otimes 1+(1-ss^*)\otimes ss^* ,
\end{align*}
the idempotent \eqref{MilnProjEq} becomes
\[
p_{\widetilde{w}}=\left(
\begin{array}{cc}
(1, 1) & (0,0)\\
(0, 0) & (0, 1-dc)
\end{array}
\right)= 
\left(
\begin{array}{cc}
(1, 1) & (0,0)\\
(0, 0) & \big(0,(1-ss^*)\otimes (1-ss^*)\big)
\end{array}\right) .
\]
Therefore, $\partial_{10}([\widetilde{w}])=[p_{\widetilde{w}}]-[1]=[\hspace{1pt}\widetilde{\mathrm{p}}\hspace{1pt}]$.
\end{proof}

Combining Lemma~\ref{prop:52}, Lemma~\ref{prop:51} and Lemma~\ref{LemGenTilda}, we obtain:
\begin{thm}\label{ptilde}
$[\tilde{L}_1\oplus \tilde{L}_{-1}]-2[1]=[\hspace{1pt}\widetilde{\mathrm{p}}\hspace{1pt}]\in K_0(C(\C P^2_q))$.
\end{thm}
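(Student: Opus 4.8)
The plan is simply to concatenate the three lemmas just proved into a single chain of equalities in $K_0(C(\C P^2_q))$. First I would invoke Lemma~\ref{LemGenTilda} to rewrite the left-hand side $[\tilde{L}_1\oplus \tilde{L}_{-1}]-2[1]$ as $\partial_{10}([U])$, the image of the fundamental-representation class of $SU_q(2)$ under the Milnor connecting homomorphism of the pullback diagram~\eqref{eq:CP2qpullback}.

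The middle step rests on the observation that $\partial_{10}\colon K_1(C(SU_q(2)))\to K_0(C(\C P^2_q))$ is a well-defined group homomorphism, so it sends equal $K_1$-classes to equal $K_0$-classes. Lemma~\ref{prop:51} supplies the required equality $[U]=[\widetilde{w}]$, whence $\partial_{10}([U])=\partial_{10}([\widetilde{w}])$.

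Finally, I would apply Lemma~\ref{prop:52}, which evaluates $\partial_{10}([\widetilde{w}])=[\hspace{1pt}\widetilde{\mathrm{p}}\hspace{1pt}]$ by producing an explicit lift $c\in C(B^4_q)$ of $\widetilde{w}$ and computing the resulting Milnor idempotent~\eqref{MilnProjEq}. Stringing these three equalities together gives the asserted identity. There is no real obstacle internal to this argument; the only point worth verifying is that the symbol $\partial_{10}$ denotes the connecting homomorphism of one and the same pullback diagram~\eqref{eq:CP2qpullback} in both Lemma~\ref{LemGenTilda} and Lemma~\ref{prop:52}, which is indeed the case. The genuine difficulty sits upstream, in the homotopy of unitary $2\times 2$ matrices over $C(SU_q(2))$ underlying Lemma~\ref{prop:51}, and in the identification (via Theorem~\ref{thm41}) of a generator of $K_1(C(SU_q(2)))$ that Lemma~\ref{LemGenTilda} relies on.
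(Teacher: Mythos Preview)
Your proposal is correct and matches the paper's own proof exactly: the theorem is obtained simply by combining Lemma~\ref{LemGenTilda}, Lemma~\ref{prop:51}, and Lemma~\ref{prop:52} into the chain $[\tilde{L}_1\oplus \tilde{L}_{-1}]-2[1]=\partial_{10}([U])=\partial_{10}([\widetilde{w}])=[\hspace{1pt}\widetilde{\mathrm{p}}\hspace{1pt}]$. Your check that $\partial_{10}$ refers to the same connecting homomorphism in both lemmas is the only point requiring attention, and it holds.
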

\begin{cor}\label{cor:54}
The element
$[\tilde{L}_1\oplus \tilde{L}_{-1}]-2[1]$ is in the positive cone of $K_0(C(\C P^2_q))$.
\end{cor}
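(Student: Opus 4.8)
The plan is to read the statement off directly from Theorem~\ref{ptilde}. By that theorem, the class $[\tilde L_1\oplus\tilde L_{-1}]-2[1]$ equals $[\widetilde{\mathrm p}]$, where $\widetilde{\mathrm p}=\big(0,(1-ss^*)\otimes(1-ss^*)\big)$ is the element of $C(\C P^2_q)$ from \eqref{eq:wtp}. Since the positive cone of $K_0$ of a unital C*-algebra is by definition the subset of classes represented by projections in matrix algebras over that C*-algebra, everything reduces to confirming that $\widetilde{\mathrm p}$ is a genuine projection in $C(\C P^2_q)$ rather than merely a formal $K_0$-difference.

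First I would confirm that $\widetilde{\mathrm p}$ is a well-defined element of the pullback $C(\C P^2_q)$ of \eqref{eq:CP2qpullback}, which requires the pair $(0,(1-ss^*)\otimes(1-ss^*))$ to be matched under the two maps into $C(SU_q(2))$. The inclusion $C(S^2_q)\subseteq C(SU_q(2))$ sends the first entry $0$ to $0$, while $\varpi$ kills the second entry: because $\sigma(s)=u$ is unitary one has $\sigma(1-ss^*)=0$, and then formula \eqref{eq:varpi} gives $\varpi\big((1-ss^*)\otimes(1-ss^*)\big)=0$. Since $1-ss^*$ is invariant under the rephasing action, the element also lies in the relevant $U(1)$-fixed-point algebra. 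Next I would note that $\widetilde{\mathrm p}$ is self-adjoint and idempotent, being in each coordinate a projection (with $(1-ss^*)\otimes(1-ss^*)$ a tensor square of the projection $1-ss^*\in\mathcal T$); indeed, it already appears in Lemma~\ref{prop:52} as the bottom-right block of the Milnor projection $p_{\widetilde w}$, so its projection property is inherited from there.

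There is essentially no obstacle at this stage: the corollary is a one-line consequence of Theorem~\ref{ptilde} together with the definition of the positive cone. All the substantive work — exhibiting the projection $\widetilde{\mathrm p}$ and proving that it represents the desired $K_0$-class — was already carried out in Lemmas~\ref{prop:51} and~\ref{prop:52}.
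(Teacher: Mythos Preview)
Your proposal is correct and matches the paper's approach exactly: the paper states Corollary~\ref{cor:54} with no separate proof, treating it as an immediate consequence of Theorem~\ref{ptilde}, and your argument simply spells out why $\widetilde{\mathrm p}$ is a genuine projection in $C(\C P^2_q)$, which is all that is needed.
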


Note that, in the classical case, every vector bundle on $\C P^2$ is stably isomorphic to a direct sum of line bundles associated to 
the principal $U(1)$-bundle $S^5\to\C P^2$.
In particular, the positive cone of $K^0(\C P^2)$ consists only of classes of vector bundles associated to the above principal bundle.
Also, since there are no non-zero vector bundles with rank $0$,
the class $[\mathrm{L}_1\oplus\mathrm{L}_{-1}]-2[1]$ is not in the positive cone of $K_0(C(\C P^2))$ (nor in the negative cone).
However, in the quantum case, $[\tilde{L}_1\oplus \tilde{L}_{-1}]-2[1]=[\hspace{1pt}\widetilde{\mathrm{p}}\hspace{1pt}]$ is in the positive cone despite 
$C(\C P^2_q)\widetilde{\mathrm{p}}$ not being associated to the compact quantum principal $U(1)$-bundle $S^5_q\to\C P^2_q$ (see the next 
proposition). Better still, we also show that the same property is shared by the complementary projection $1-\widetilde{\mathrm{p}}$.

\begin{prop}\label{nonass}
Let $\widetilde{\mathrm{p}}$ be the projection in \eqref{eq:wtp}. Then the modules
 $C(\C P^2_q)\widetilde{\mathrm{p}}$ and $C(\C P^2_q)(1-\widetilde{\mathrm{p}})$ are not associated to $\mathcal{P}_{U(1)}(C(S^5_q))$.
\end{prop}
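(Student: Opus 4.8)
The plan is to reduce both non-association claims to a rank computation, supplemented in the second case by an index-pairing argument. The basic structural input is the consequence of Proposition~\ref{prop:21} recorded above: since $\mathcal{P}_{U(1)}(C(S^5_q))$ admits a character, every finitely generated projective left $C(\C P^2_q)$-module associated to $\mathcal{P}_{U(1)}(C(S^5_q))$ through a comodule $V$ has rank (fiber dimension) equal to $\dim V$. Throughout I will use that the line bundles $\tilde{L}_k$ have rank $1$, that $[1]$ has rank $1$, and that $[\widetilde{\mathrm{p}}]=[\tilde{L}_1]+[\tilde{L}_{-1}]-2[1]$ generates the free summand $\partial_{10}(K_1(C(SU_q(2))))\cong\Z$ of $K_0(C(\C P^2_q))$, as established in Lemma~\ref{LemGenTilda}.

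First I would treat $C(\C P^2_q)\widetilde{\mathrm{p}}$. Applying the rank homomorphism $K_0(C(\C P^2_q))\to\Z$ to $[\widetilde{\mathrm{p}}]=[\tilde{L}_1]+[\tilde{L}_{-1}]-2[1]$ gives $1+1-2=0$. Were this module associated through some $V$, the rank fact would force $\dim V=0$, hence $V=0$ and the associated module would be the zero module; but $\widetilde{\mathrm{p}}\neq 0$, since its second leg $(1-ss^*)\otimes(1-ss^*)$ is a nonzero projection of $C(B^4_q)$, so $C(\C P^2_q)\widetilde{\mathrm{p}}\neq 0$, a contradiction.

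Next I would treat $C(\C P^2_q)(1-\widetilde{\mathrm{p}})$, whose rank is $1-0=1$. An association would therefore arise from a one-dimensional comodule $V$; as the one-dimensional left $\mathcal{O}(U(1))$-comodules are exactly the group-like lines $\C_n$, $n\in\Z$, the associated module would be $\tilde{L}_n$, yielding the $K_0$-identity $[1]-[\widetilde{\mathrm{p}}]=[\tilde{L}_n]$. This is where the index pairing enters: pairing both sides with the even Fredholm module over $\C P^2_q$ computing the first Chern number $c_1$, normalized so that $c_1([\tilde{L}_m])=m$, gives $n$ on the right, while on the left $c_1([1])-c_1([\widetilde{\mathrm{p}}])=0-0=0$, using $c_1([\widetilde{\mathrm{p}}])=c_1([\tilde{L}_1])+c_1([\tilde{L}_{-1}])=1-1=0$. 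Hence $n=0$, so $\tilde{L}_0\cong C(\C P^2_q)$ and $[1]-[\widetilde{\mathrm{p}}]=[1]$, forcing $[\widetilde{\mathrm{p}}]=0$ and contradicting that $[\widetilde{\mathrm{p}}]$ generates $\partial_{10}(K_1(C(SU_q(2))))\cong\Z$.

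The rank bookkeeping and the classification of one-dimensional $\mathcal{O}(U(1))$-comodules are routine. The genuine obstacle is the index-pairing input $c_1([\tilde{L}_n])=n$: this is the quantum first-Chern-number computation for the tautological line bundles over $\C P^2_q$, which I would obtain from the Chern--Connes pairing with the fundamental even Fredholm module, in the spirit of the pairing used in~\cite{dhhmw12} for the class of $U$. Equivalently, one could pair with the second Chern number, for which $c_2([\tilde{L}_n])=\binom{n}{2}\geq 0$ for every $n\in\Z$, so that the identity $[1]-[\widetilde{\mathrm{p}}]=[\tilde{L}_n]$ is excluded at once by $c_2([1]-[\widetilde{\mathrm{p}}])=-1$, without first pinning down $n$.
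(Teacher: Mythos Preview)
Your proof is correct and follows essentially the same two-step strategy as the paper: a rank computation to exclude $C(\C P^2_q)\widetilde{\mathrm{p}}$, and a rank-plus-winding-number (first Chern) computation to exclude $C(\C P^2_q)(1-\widetilde{\mathrm{p}})$. The only substantive difference is in how the index-pairing input $c_1([\tilde L_n])=n$ is obtained: the paper builds the rank/winding-number homomorphism concretely by pushing forward along the restriction $K_0(C(\C P^2_q))\to K_0(C(S^2_q))$ and then invoking the known computation $(\tau^0,\tau^1)([\tilde L_n])=(1,-n)$ from \cite{h-pm00} together with \cite[Theorem~5.1]{hnpsz}, whereas you appeal to a Chern--Connes pairing with an even Fredholm module and flag this as the step requiring external input; the paper's route has the advantage of being a direct citation rather than a new pairing computation.
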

\begin{proof}
Recall first that there is a homomorphism of abelian groups
\[
(\tau^0,\tau^1):K_0(C(S^2_q))\longrightarrow\Z\oplus\Z
\]
assigning to each $K_0$-class its ``rank'' and its ``winding number'' \cite[(1.6)]{h-pm00}.
Theorem~2.1 in \cite{h-pm00} says that $(\tau^0,\tau^1)$, applied to the modules associated via the corepresentation 
\mbox{$1\mapsto u^n\otimes 1$}, is given by $(1,-n)$.
Now, it follows from \cite[Theorem~5.1]{hnpsz} that the left \mbox{$C(\C P^2_q)$-module} $\tilde L_n$ pushed forward via the map 
$C(S^5_q)\to C(SU_q(2))$ in \eqref{eq:diagS5q} is the module associated to $C(SU_q(2))$ via the same corepresentation.
Composing $(\tau^0,\tau^1)$ with the induced map $K_0(C(\C P^2_q))\to K_0(C(S^2_q))$ between the K-groups of the fixed-point subalgebras, we obtain 
a homomorphism of abelian groups
\[
K_0(C(\C P^2_q))\longrightarrow \Z\oplus\Z
\]
assigning to each projective module its rank and winding number, which for $\tilde L_n$ is $(1,-n)$.

Next, note that $[\hspace{1pt}\widetilde{\mathrm{p}}\hspace{1pt}]\neq 0$ because it generates a free summand of $K_0(C(\C P^2_q))$ by 
Lemma~\ref{prop:52} and Lemma~\ref{LemGenTilda}.
Therefore, $[1-\widetilde{\mathrm{p}}]=[1]-[\hspace{1pt}\widetilde{\mathrm{p}}\hspace{1pt}]\neq [1]$.
Suppose now that 
\[
C(\C P^2_q)\widetilde{\mathrm{p}}\cong\mathcal{P}_{U(1)}(C(S^5_q))\mathbin{\Box}^{\mathcal{O}(U(1))}V
\]
for some corepresentation $V$. Then it follows from Corollary~\ref{cor:54} that its $K_0$-class is $[\tilde L_1]+[\tilde L_{-1}]-2[1]$, which has rank $0$. 
Combining this with Proposition~\ref{prop:21}, we infer that $V=0$, which contradicts $[\hspace{1pt}\widetilde{\mathrm{p}}\hspace{1pt}]\neq 0$.

Suppose finally that
\[
C(\C P^2_q)(1-\widetilde{\mathrm{p}})\cong\mathcal{P}_{U(1)}(C(S^5_q))\mathbin{\Box}^{\mathcal{O}(U(1))}V'
\]
for some corepresentation $V'$.
Since this module has rank $1$, Proposition~\ref{prop:21} implies that
$\dim V'=1$. Therefore,
$C(\C P^2_q)(1-\widetilde{\mathrm{p}})\cong\tilde{L}_n$ for some $n\in\Z$.
On the other hand, it follows from
\[
[\tilde L_n]=[1-\widetilde{\mathrm{p}}]=3[1]-[\tilde L_1]-[\tilde L_{-1}]
\]
that the winding number of $[\tilde L_n]$ is \mbox{$-n=0-(-1)-(+1)=0$}.
This yields the desired contradiction as $[1-\widetilde{\mathrm{p}}]\neq [1]$.
\end{proof}

\begin{rmk}
Recall that $\mathcal{T}$ and $C(S^1)$ can be understood as graph C*-algebras in a standard way, with the natural $U(1)$-actions becoming the gauge actions. Thus, the pullback presentation of $C(SU_q(2))$ given in \eqref{eq:pullsuq2} forces us to choose a particular $U(1)$-equivariant identification of $C(SU_q(2))$ with the graph C*-algebra
$C^*(L_3)$ in~\cite{hong02}.
Under this identification, the unitary $\widetilde{w}$ in \eqref{eq:w1} becomes the inverse of the unitary in \cite[Proposition~2.3]{arici2022}
used to compute the Milnor connecting homomorphism, which turns out to be given by a vertex projection.
\end{rmk}

\subsection{The multipushout case}

The goal of this section is to transfer the results of the preceeding section to the multipushout quantum complex projective plane $\C P^2_H$.
For starters, we have:
\begin{lemma}
The class of the unitary
\begin{equation}\label{eq:wH}
w:=\big( 1\otimes 1, ss^* \otimes 1 + (1-ss^*) \otimes u^* \big)
\end{equation}
generates $K_1(C(S^3_H))$.
\end{lemma}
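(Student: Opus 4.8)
The plan is to identify $w$ as the image $\nu(\widetilde{w})$ of the unitary $\widetilde{w}$ from \eqref{eq:w1}--\eqref{eq:w2} under the *-homomorphism $\nu:C(SU_q(2))\to C(S^3_H)$ of \eqref{eq:nu226}, and then to transport the generating property across the isomorphism $\nu_*$.

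First I would carry out the direct substitution. In the pullback presentation \eqref{eq:pullsuq2} of $C(SU_q(2))$, the unitary $\widetilde{w}$ is the pair $\widetilde{w}=\big(1,\,ss^*\otimes 1+(1-ss^*)\otimes u^*\big)$ recorded in \eqref{eq:w2}, with first leg $1\in C(S^1)$ and second leg in $\mathcal{T}\otimes C(S^1)^\bullet$. The explicit formula \eqref{eq:nu226} sends the left leg by $v\mapsto v\otimes 1_{\mathcal{T}}$ and acts as the identity on the right leg, so by linearity $\nu(\widetilde{w})=\big(1\otimes 1_{\mathcal{T}},\,ss^*\otimes 1+(1-ss^*)\otimes u^*\big)=w$. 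Since $\nu$ is a unital *-homomorphism and $\widetilde{w}$ is unitary, this identity simultaneously confirms that $w$ is a genuine unitary in the pullback $C(S^3_H)$, so that its two legs automatically satisfy the compatibility condition built into \eqref{eq:S3qS3H}.

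It then remains to invoke the K-theoretic inputs already in place. By Lemma~\ref{prop:51} we have $[\widetilde{w}]=[U]$ in $K_1(C(SU_q(2)))$, and $[U]$ generates $K_1(C(SU_q(2)))\cong\mathbb{Z}$ as recalled at the end of Lemma~\ref{LemGenTilda}; hence $[\widetilde{w}]$ is a generator. By Lemma~\ref{lemma:nu43}, the induced map $\nu_*:K_1(C(SU_q(2)))\to K_1(C(S^3_H))$ is an isomorphism. Therefore $[w]=\nu_*([\widetilde{w}])$ is the image of a generator under an isomorphism, hence a generator of $K_1(C(S^3_H))$. I expect no genuine obstacle here: the only delicate point is the bookkeeping in the first step, where one must keep the two legs of each pullback straight and match the tensor factors correctly across \eqref{eq:pullsuq2}, \eqref{eq:S3qS3H} and \eqref{eq:nu226}. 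Once the identity $w=\nu(\widetilde{w})$ is checked, the conclusion is immediate.
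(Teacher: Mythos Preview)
Your proposal is correct and follows essentially the same route as the paper: verify $w=\nu(\widetilde{w})$ via \eqref{eq:nu226}, use that $[\widetilde{w}]$ generates $K_1(C(SU_q(2)))$, and transport this through the isomorphism $\nu_*$ of Lemma~\ref{lemma:nu43}. The only cosmetic difference is that the paper cites directly that $[\widetilde{w}]$ is a generator (from \cite{masuda1990noncommutative}) rather than passing through Lemma~\ref{prop:51} and $[U]$, but this does not affect the argument.
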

\begin{proof}
Note that the *-homomorphism $\nu$ in \eqref{eq:nu226} maps the unitary $\widetilde{w}$ in \eqref{eq:w2} to the unitary $w$ in \eqref{eq:wH}.
Now, since $[\widetilde{w}]$ generates $K_1(C(SU_q(2)))$, and
\[
\nu_*:K_1(C(SU_q(2)))\longrightarrow K_1(C(S^3_H))
\]
is an isomorphism by Lemma~\ref{lemma:nu43},
we conclude that 
\begin{equation}\label{ww}
[w]=\nu_*([\widetilde{w}])
\end{equation}
 generates $K_1(C(S^3_H))$.
\end{proof}

We are now ready  to compute the third generator of~$K_0(C(\C P^2_H))$:
\begin{lemma}\label{511}
The Milnor connecting homomorphism $\partial_{10}$ maps $[w]\in K_1(C(S^3_H))$ to $[\mathrm{p}]\in K_0(C(\C P^2_H))$, where
\begin{equation}\label{eq:wpnot}
\mathrm{p}:=\big(0,(1-ss^*)\otimes (1-ss^*)\big) \in C(\C P^2_H).
\end{equation}
\end{lemma}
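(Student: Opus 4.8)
The plan is to evaluate $\partial_{10}([w])$ directly from the Milnor formula \eqref{MilnProjEq} for the pullback \eqref{eq:CP2Hpullback}, whose surjective leg is $\omega\colon\mathcal{T}\otimes\mathcal{T}\to C(S^3_H)$. The computation runs parallel to that of Lemma~\ref{prop:52}, with $\mathcal{T}\otimes\mathcal{T}$ now playing the role of the lifting algebra (in place of the subalgebra $C(B^4_q)$) and $\omega$ the role of $\varpi$. First I would produce lifts of $w$ and $w^{-1}$: set $c:=ss^*\otimes 1+(1-ss^*)\otimes s^*$ and $d:=c^*$ in $\mathcal{T}\otimes\mathcal{T}$. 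Applying the defining formula of $\omega$ to each of the two orthogonal summands of $c$ — using $\sigma(s)=u$, $\sigma(ss^*)=1$, $\sigma(1-ss^*)=0$ and the coaction $\delta_R(s)=s\otimes u$ — I would confirm that $\omega(c)=w$ as given in \eqref{eq:wH}, whence $\omega(d)=\omega(c)^*=w^{-1}$, so that $c,d$ are admissible in the sense of \eqref{MilnProjEq}.

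The two products feeding the idempotent are then computed using the orthogonality of $ss^*$ and $1-ss^*$ together with $s^*s=1$: one finds $cd=1$ and $dc=ss^*\otimes 1+(1-ss^*)\otimes ss^*$, the very expressions obtained in Lemma~\ref{prop:52}. Because $cd=1$, the Milnor projection \eqref{MilnProjEq} collapses to the diagonal form with corners $(1,1)$ and $(0,1-dc)$, and since $1-dc=(1-ss^*)\otimes(1-ss^*)$, its lower corner is exactly $\mathrm{p}$ of \eqref{eq:wpnot}. A quick check that $\omega\big((1-ss^*)\otimes(1-ss^*)\big)=0$ confirms that $\mathrm{p}=\big(0,(1-ss^*)\otimes(1-ss^*)\big)$ is indeed a legitimate element of the pullback $C(\C P^2_H)$. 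Reading off the $K_0$-class then gives $\partial_{10}([w])=[p_w]-[1]=[\mathrm{p}]$.

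I do not anticipate a real obstacle; the only delicate bookkeeping is the termwise verification $\omega(c)=w$, where the Heyneman--Sweedler coaction must be expanded on each summand. As an independent confirmation I would use the transfer principle already at hand: by \eqref{ww} we have $[w]=\nu_*([\widetilde{w}])$, so Theorem~\ref{thm:transfer} applied to diagram \eqref{ProjectiveMaps} yields $\partial_{10}([w])=(f^{U(1)})_*\big(\partial_{10}([\widetilde{w}])\big)=(f^{U(1)})_*\big([\widetilde{\mathrm{p}}]\big)$ by Lemma~\ref{prop:52}; since the $C(B^4_q)$-component of $f^{U(1)}$ is the inclusion $\iota$, it carries $\widetilde{\mathrm{p}}$ to $\mathrm{p}$, recovering $[\mathrm{p}]$ and matching the direct computation at the level of projections.
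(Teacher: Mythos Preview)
Your proof is correct. The paper takes only your second route --- the transfer via Theorem~\ref{thm:transfer} applied to diagram~\eqref{ProjectiveMaps}, reducing to Lemma~\ref{prop:52} and then pushing $[\hspace{1pt}\widetilde{\mathrm{p}}\hspace{1pt}]$ forward under $(f^{U(1)})_*$ --- whereas your primary argument is a direct Milnor-idempotent computation with lifts in $\mathcal{T}\otimes\mathcal{T}$ along~$\omega$. The direct route is self-contained and avoids invoking the naturality theorem, at the cost of duplicating the calculation already carried out in Lemma~\ref{prop:52}: the very same $c$, $d$, $cd$, $dc$ reappear, since $c\in C(B^4_q)\subset\mathcal{T}\otimes\mathcal{T}$ and $\omega\circ\iota=\nu\circ\varpi$ by~\eqref{eq:B4q}. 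The paper's route packages this repetition into a single functoriality statement. Both approaches land on the same projection, so your two arguments agree not merely in $K_0$ but at the level of representatives.
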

\begin{proof}
First, recall that $\partial_{10}([\widetilde{w}])=[\hspace{1pt}\widetilde{\mathrm{p}}\hspace{1pt}]$, where $\widetilde{\mathrm{p}}$ 
is the projection \eqref{eq:wtp}.
Now, applying Theorem~\ref{thm:transfer} to the commutative diagram \eqref{ProjectiveMaps}, we deduce that
\[
\partial_{10}([w])=
(\partial_{10}\circ\nu_*)[\widetilde{w}]=
((f^{U(1)})_*\circ\partial_{10})([\widetilde{w}])=
(f^{U(1)})_*([\hspace{1pt}\widetilde{\mathrm{p}}\hspace{1pt}]) .
\]
Finally, an immediate calculation gives 
\begin{equation}\label{tilde}
(f^{U(1)})_*([\hspace{1pt}\widetilde{\mathrm{p}}\hspace{1pt}]) =[\mathrm{p}],
\end{equation}
 where $\mathrm{p}$ is the projection in~\eqref{eq:wpnot}.
\end{proof}

We are now ready to prove:
\begin{thm}\label{pnotilde}
$[{L}_1\oplus {L}_{-1}]-2[1]=[\hspace{1pt}{\mathrm{p}}\hspace{1pt}]\in K_0(C(\C P^2_H))$.
\end{thm}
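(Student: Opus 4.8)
The plan is to obtain the identity purely by concatenating results already established, since every piece of genuine content has been settled in the preceding lemmas. The natural starting point is Lemma~\ref{511}, which gives $\partial_{10}([w])=[\mathrm{p}]$ for the Milnor connecting homomorphism $\partial_{10}$ of the pullback diagram~\eqref{eq:CP2Hpullback}. Thus it suffices to recognize $\partial_{10}([w])$ as the left-hand side of the asserted equality, and the whole proof reduces to rewriting $[w]$ appropriately.

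To do this I would express $[w]$ in terms of the Vaksman--Soibelman generator. By \eqref{ww} one has $[w]=\nu_*([\widetilde{w}])$, and by Lemma~\ref{prop:51} the explicit homotopy yields $[\widetilde{w}]=[U]$, the class of the fundamental corepresentation of $SU_q(2)$. Hence $[w]=\nu_*([U])$, and therefore $\partial_{10}([w])=(\partial_{10}\circ\nu_*)([U])$, where both occurrences of $\partial_{10}$ refer to the connecting map of the same diagram~\eqref{eq:CP2Hpullback}, a compatibility guaranteed through diagram~\eqref{ProjectiveMaps} together with Theorem~\ref{thm:transfer}.

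Finally, the composite $(\partial_{10}\circ\nu_*)([U])$ has already been evaluated in the proof of Corollary~\ref{MainCorSuff}, namely in \eqref{418}, where it equals $[L_1\oplus L_{-1}]-2[1]$. Stringing these steps together gives
\[
[\mathrm{p}]=\partial_{10}([w])=\partial_{10}(\nu_*([U]))=(\partial_{10}\circ\nu_*)([U])=[L_1\oplus L_{-1}]-2[1],
\]
which is exactly the claim. I do not expect any real obstacle here: the theorem is a synthesis statement, and the only point demanding attention is bookkeeping, i.e.\ confirming that the connecting homomorphism in Lemma~\ref{511} and the one in \eqref{418} are the same map on $K_1(C(S^3_H))$, which is precisely what the transfer diagram of Theorem~\ref{thm:transfer} supplies. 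All the substantive work (the homotopy of Lemma~\ref{prop:51}, the Milnor-projection computation underlying Lemma~\ref{511}, and the isomorphism of Theorem~\ref{KIsomMainThm}) has been carried out beforehand, so the proof itself is a short chain of equalities.
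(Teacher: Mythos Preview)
Your proof is correct and follows essentially the same route as the paper: both arguments chain together \eqref{418}, Lemma~\ref{prop:51}, \eqref{ww}, and Lemma~\ref{511} to produce the equality $[L_1\oplus L_{-1}]-2[1]=(\partial_{10}\circ\nu_*)([U])=(\partial_{10}\circ\nu_*)([\widetilde{w}])=\partial_{10}([w])=[\mathrm{p}]$, only traversed in opposite directions. Your aside about matching up the two occurrences of $\partial_{10}$ via Theorem~\ref{thm:transfer} is slightly overcautious---both are already the connecting map of the same diagram~\eqref{eq:CP2Hpullback}---but it does no harm.
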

\begin{proof}
It follows from \eqref{418}, Lemma~\ref{prop:51}, \eqref{ww} and Lemma~\ref{511} that
\[
[{L}_1\oplus {L}_{-1}]-2[1]=(\partial_{10}\circ \nu_*)([U])=(\partial_{10}\circ \nu_*)([\widetilde{w}])=\partial_{10}([w])
=[\hspace{1pt}{\mathrm{p}}\hspace{1pt}].
\]
\end{proof}
\begin{cor}
The element
$[L_1\oplus L_{-1}]-2[1]$ is in the positive cone of $K_0(C(\C P^2_H))$.
\end{cor}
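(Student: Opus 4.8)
The plan is to read the statement off directly from Theorem~\ref{pnotilde}, so essentially no new work is required. I would begin by recalling what the positive cone of the $K_0$-group means: for a unital C*-algebra $A$, the cone $K_0(A)^+$ is the image of the semigroup of projections in $\bigcup_n M_n(A)$ under the canonical map into $K_0(A)$, so that a class lies in $K_0(A)^+$ exactly when it is represented by an honest projection (a self-adjoint idempotent) in some matrix algebra over~$A$.

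First I would invoke Theorem~\ref{pnotilde}, which gives the identity $[L_1\oplus L_{-1}]-2[1]=[\mathrm{p}]$ with $\mathrm{p}=\big(0,(1-ss^*)\otimes(1-ss^*)\big)$. The key observation is then that $\mathrm{p}$ is not merely a formal difference of dimensions but a genuine projection already inside $C(\C P^2_H)$: because $s$ is an isometry, $1-ss^*$ is a projection in $\mathcal{T}$, hence $(1-ss^*)\otimes(1-ss^*)$ is a projection in $\mathcal{T}\otimes\mathcal{T}$; and by Lemma~\ref{511} the pair $\mathrm{p}$ lies in the pullback C*-algebra $C(\C P^2_H)$, where self-adjointness and idempotency are inherited componentwise. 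Thus $\mathrm{p}$ is a projection in $C(\C P^2_H)=M_1(C(\C P^2_H))$, and its class is in the positive cone by definition.

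I do not expect any real obstacle at this stage: all the substance has already been absorbed into the chain of homotopy and transfer arguments (Lemma~\ref{prop:51}, Lemma~\ref{511}, Theorem~\ref{pnotilde}) that rewrites the virtual, rank-zero element $[L_1\oplus L_{-1}]-2[1]$ as the class of a concrete Toeplitz-type projection. The only point one must be slightly careful about --- and it is entirely routine --- is to distinguish a class that merely happens to be a difference of dimensions from one that is genuinely represented by a projection in the algebra itself; it is precisely the latter, exhibited by Theorem~\ref{pnotilde}, that makes positivity manifest without any passage to stabilization, and that has no classical analogue, since the only rank-zero complex vector bundle is the zero bundle.
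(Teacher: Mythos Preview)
Your proposal is correct and matches the paper's approach: the corollary is stated without proof immediately after Theorem~\ref{pnotilde}, since the identity $[L_1\oplus L_{-1}]-2[1]=[\mathrm{p}]$ with $\mathrm{p}$ a genuine projection in $C(\C P^2_H)$ makes positivity immediate. Your added verification that $\mathrm{p}$ really is a projection in the pullback is the only content one could reasonably supply, and you have done so.
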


We end by claiming the non-association result analogous to Proposition~\ref{nonass}:
\begin{prop}
Let $\mathrm{p}$ be the projection in \eqref{eq:wpnot}. Then the modules
 $C(\C P^2_H)\mathrm{p}$ and $C(\C P^2_H)(1-\mathrm{p})$ are not associated to $\mathcal{P}_{U(1)}(C(S^5_H))$.
\end{prop}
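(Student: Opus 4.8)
The plan is to mirror the proof of Proposition~\ref{nonass}, swapping the Vaksman--Soibelman objects for their multipushout counterparts and transporting the rank-and-winding-number invariant along the $K$-theory isomorphism $(f^{U(1)})_*$ furnished by Theorem~\ref{KIsomMainThm}. First I would produce a homomorphism of abelian groups
\[
(\tau^0,\tau^1)\colon K_0(C(\C P^2_H))\longrightarrow\Z\oplus\Z
\]
sending $[L_n]$ to $(1,-n)$ for every $n\in\Z$. Rather than building it from scratch, I would take it to be the composition of $\big((f^{U(1)})_*\big)^{-1}$ with the rank-and-winding-number homomorphism on $K_0(C(\C P^2_q))$ already used in Proposition~\ref{nonass}; since $(f^{U(1)})_*([\tilde L_n])=[L_n]$ for all $n$ by \cite[Theorem~5.1]{hnpsz}, the composite takes the asserted values on each $[L_n]$.

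Next I would record that, on the class of an associated module, the first component $\tau^0$ computes exactly the dimension of the defining corepresentation. Indeed, a corepresentation $V$ of $U(1)$ decomposes as a direct sum $\bigoplus_j\C^{n_j}$ of one-dimensional corepresentations, and $\mathcal{P}_{U(1)}(C(S^5_H))\mathbin{\Box}^{\mathcal{O}(U(1))}\C^{n}$ is by definition the $n$-th spectral subspace $L_n$ of \eqref{spectral} (compare the computation \eqref{TlTlEq} in the $S^5_q$ case). Hence $\tau^0\big([\mathcal{P}_{U(1)}(C(S^5_H))\mathbin{\Box}^{\mathcal{O}(U(1))}V]\big)=\sum_j 1=\dim V$, which is the $K_0$-level counterpart of the fiber computation of Proposition~\ref{prop:21}.

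With these preliminaries the two non-association claims close exactly as in Proposition~\ref{nonass}. We know $[\mathrm{p}]\neq 0$, since by Theorem~\ref{pnotilde} together with Corollary~\ref{MainCorSuff} it generates a free summand of $K_0(C(\C P^2_H))$. If $C(\C P^2_H)\mathrm{p}$ were associated via some $V$, then by Theorem~\ref{pnotilde} its class $[L_1]+[L_{-1}]-2[1]$ would have rank $\tau^0=0$, forcing $\dim V=0$ and hence $V=0$ and $[\mathrm{p}]=0$, a contradiction. If instead $C(\C P^2_H)(1-\mathrm{p})$ were associated via some $V'$, then its rank $1$ would force $\dim V'=1$ by the rank formula above, so $C(\C P^2_H)(1-\mathrm{p})\cong L_n$ for some $n$; but $\tau^1\big([1-\mathrm{p}]\big)=\tau^1\big(3[1]-[L_1]-[L_{-1}]\big)=0$ yields $n=0$, whence $[1-\mathrm{p}]=[1]$, again contradicting $[\mathrm{p}]\neq 0$.

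The step demanding the most care is the first: establishing that the winding-number invariant is well defined on $K_0(C(\C P^2_H))$ and genuinely assigns $-n$ to $[L_n]$ for all $n\in\Z$, not merely for $n=\pm1$. Transporting it along the isomorphism $(f^{U(1)})_*$ is formally effortless, but the whole bookkeeping in the second half rests on the full-strength identification $(f^{U(1)})_*([\tilde L_n])=[L_n]$ of \cite[Theorem~5.1]{hnpsz}; without its all-$n$ version the winding-number computation for $1-\mathrm{p}$ would not be available.
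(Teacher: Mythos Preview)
Your proposal is correct and follows essentially the same strategy as the paper: both arguments hinge on the $K$-theory isomorphism $(f^{U(1)})_*$ of Theorem~\ref{KIsomMainThm} together with the identification $(f^{U(1)})_*([\tilde L_n])=[L_n]$ from \cite[Theorem~5.1]{hnpsz}, and then run the rank-and-winding-number argument of Proposition~\ref{nonass}. The only organisational difference is that you transport the pair $(\tau^0,\tau^1)$ to the $H$-side and argue there, whereas the paper, for the $1-\mathrm{p}$ case, transports the hypothetical comodule $V'$ back to the $q$-side (forming $P_{V'}:=\mathcal{P}_{U(1)}(C(S^5_q))\mathbin{\Box}^{\mathcal{O}(U(1))}V'$), deduces $[P_{V'}]=[1-\widetilde{\mathrm{p}}]$, and then literally invokes the already-proven Proposition~\ref{nonass}; for the $\mathrm{p}$ case the paper appeals to Proposition~\ref{prop:21} for the rank, where you instead decompose $V$ into one-dimensional pieces---both are valid.
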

\begin{proof} 
Suppose first that 
\[
C(\C P^2_H)\mathrm{p}\cong\mathcal{P}_{U(1)}(C(S^5_H))\mathbin{\Box}^{\mathcal{O}(U(1))}V
\]
for some left $\mathcal{O}(U(1))$-comodule~$V$. Then it follows from Theorem~\ref{pnotilde} that its $K_0$-class is $[ L_1]+[ L_{-1}]-2[1]$, which has rank $0$. 
Combining this with Proposition~\ref{prop:21}, we infer that $V=0$, which contradicts the fact that $[\hspace{1pt}\mathrm{p}\hspace{1pt}]$ generates a free summand of $K_0(C(\C P^2_H))$ by Lemma~\ref{511}.

Suppose  now that $C(\C P^2_H)(1-\mathrm{p})$ is associated to $\mathcal{P}_{U(1)}(C(S^5_H))$ via a left $\mathcal{O}(U(1))$-comodule~$V'$. Using the same comodule, we define  the finitely generated projective module
\[
P_{V'}:=\mathcal{P}_{U(1)}(C(S^5_q))\mathbin{\Box}^{\mathcal{O}(U(1))}V'.
\] 
Now, it follows from \cite[Theorem~5.1]{hnpsz} that $f^{U(1)}$ maps the left \mbox{$C(\C P^2_q)$-module} $P_{V'}$ to $C(\C P^2_H)(1-\mathrm{p})$. Furthermore, since $f^{U(1)}$ induces an isomorphism $(f^{U(1)})_*$ in K-theory by Theorem \ref{KIsomMainThm}, using \eqref{tilde}, we obtain
\[
[1-\widetilde{\mathrm{p}}]=(f^{U(1)})_*^{-1}([1-\mathrm{p}])=[P_{V'}].
\]
Finally, since all modules in the same $K_0$-class have the same rank, we can reason as in Proposition~\ref{nonass} 
to conclude that $P_{V'}\cong \tilde{L}_n$ for some $n\in\mathbb{Z}$, and obtain 
 the desired contradiction by computing the winding number using the index pairing.
\end{proof}

\section*{Acknowledgements}
\noindent
This work is part of the project ``Applications of graph algebras and higher-rank graph algebras
in noncommutative geometry'' partially supported by NCN grant UMO-2021/41/B /ST1/03387.
We are very grateful to Elizabeth Gillaspy and Carla Farsi
for their contributions in early stages of this work, whose preliminary version was part of the  EU project ``Quantum Dynamics''.

\end{document}